\documentclass[11pt]{amsart}

\usepackage{amsmath,amssymb,amsfonts,graphics}

\newcommand{\N}{\mathbb{N}}
\newcommand{\Z}{\mathbb{Z}}
\newcommand{\R}{\mathbb{R}}
\newcommand{\C}{\mathbb{C}}
\newcommand{\om}{\omega}
\newcommand{\sg} {\sigma}
\newcommand{\im}{\text{im}}
\newcommand{\f}{{\bf f}}
\newcommand{\g}{{\bf g}}
\newcommand{\h}{{\bf h}}
\newcommand{\spn}{\mathrm{span}}
\newcommand{\oo}{\tilde{\omega}}
\newcommand{\os}{\tilde{\sigma}}
\newcommand{\D}{\tilde{D}}
\newcommand{\T}{\widehat{T}}
\newcommand{\kf}{\mathfrak{f}}
\newcommand{\kg}{\mathfrak{g}}
\newcommand{\X}{\mathfrak{X}}
\newcommand{\To}{\longrightarrow}
\newcommand{\dss}{\displaystyle}
\newcommand{\from}{\colon}
\newcommand{\supp}{\text{supp}}

\newcommand{\la}{\langle}
\newcommand{\ra}{\rangle}


\newtheorem{thm}{Theorem}[section]
\newtheorem{cor}[thm]{Corollary}
\newtheorem{lem}[thm]{Lemma}
\newtheorem{prop}[thm]{Proposition}
\theoremstyle{definition}
\newtheorem{defn}[thm]{Definition}
\theoremstyle{remark}

\newtheorem{rem}[thm]{Remark}
\numberwithin{equation}{section}


\title [weak amenability and 2-weak amenability of Beurling algebras]
{weak amenability and 2-weak amenability of Beurling algebras}

\author[Ebrahim Samei]{Ebrahim Samei}
\address{Ebrahim Samei, Department of Pure Mathematics, Faculty of Mathematics, University of Waterloo, Waterloo, ON, Canada, N2L 3G1}

\subjclass{Primary 43A20, 47B47.} \keywords{locally compact groups, group algebras, weight function, Beurling algebras, derivations, weakly amenable, 2-weakly amenable}

\thanks{}

\email{esamei@math.uwaterloo.ca}

\begin{document}

\maketitle

\begin{abstract}
Let $L^1_\om(G)$ be a Beurling algebra on a locally compact abelian group $G$. We look for general conditions on the weight which allows the vanishing of continuous derivations of $L^1_\om(G)$. This leads us to introducing vector-valued Beurling algebras and considering the translation of operators on them. This is then used to connect the augmentation ideal to the behavior of derivation space. We apply these results to give examples of various classes of Beurling algebras which are weakly amenable, 2-weakly amenable or fail to be even 2-weakly amenable.
\end{abstract}

Let $A$ be a Banach algebra, let $n\geq 0$ be an integer, and let $A^{(n)}$ be the $n$-th dual module of $A$ when $n>0$, and be $A$ itself when $n=0$. The algebra $A$ is said to be {\it weakly amenable} if bounded derivations $D\from A \to A^*$ are inner, and it is said to be {\it n-weakly amenable} if bounded derivations $D\from A \to A^{(n)}$ are inner. The algebra $A$ is {\it permanently weakly amenable} if it is $n$-weakly amenable for all $n\geq1$.

The concept of weak amenability was first introduced by Bade, Curtis and Dales \cite{BCD} for commutative Banach algebras, and was extended to the non-commutative case by B. E. Johnson \cite{J1} and it has been the object of many studies since (see for example, \cite{FM}, \cite{G3}, and \cite{YZ} and references therein). Dales, Ghahramani and Gr{\o}nb{\ae}k initiated the study of $n$-weakly amenable Banach algebras in \cite{DGG}, where they revealed many important properties of these algebras and presented some examples of them. For instance, they showed that
$C^*$-algebras are permanently weakly amenable; the fact that was known for weakly amenable commutative Banach algebras \cite[Theorem 1.5]{BCD}. They also showed that group algebras are $2n+1$-weakly amenable for all $n>0$
(for more example see \cite{FM}, \cite{J3}, and \cite{YZ}).

Let $L^1_\om(G)$ be a Beurling algebra on a locally compact abelian group $G$. One can pose the question of whether $L^1_\om(G)$ is $n$-weakly amenable; in our case it means that each derivation from $L^1_\om(G)$ into $L^1_\om(G)^{(n)}$ is zero. The case of weak amenability has been studied in \cite{BCD} and \cite{G1}. One major result states that $l^1_\om(\Z)$ is weakly amenable if and only if $\inf_n \frac{\om(n)\om(-n)}{n}=0$ \cite{G1}. From this, it can be easily deduced that $l^1_\om(G)$ is weakly amenable if $\inf_n \frac{\om(nt)\om(-nt)}{n}=0$ for all $t\in G$. 

Now it is natural to ask on what condition on the weight $\om$, $L^1_\om(G)$ is 2-weakly amenable. In their recent memoir \cite{DL}, Dales and Lau have addressed this question. They show that if $\om\geq 1$ is almost invariant and satisfies $\inf_n \frac{\om(nt)}{n}=0$ for all $t\in G$, then $L^1_\om(G)$ is 2-weakly amenable \cite[Theorem 13.8]{DL}, and conjecture that $L^1_\om(G)$ is 2-weakly amenable if one only assumes that $\inf_n \frac{\om(nt)}{n}=0$ ($t\in G$). Ghahramani and Zabandan proved the conjecture with an additional condition which is weaker than being almost invariant \cite{GZ}.

The central goal of this paper is to study systematically, for $G$ abelian, the behavior of the derivation space of $L^1_\om(G)$ into the dual of an arbitrary symmetric Banach $L^1_\om(G)$-module $X$ and to see when it vanishes.
From the fundumental work of Gr{\o}nb{\ae}k \cite{G1}, this question can be reduced to studying the kernel of the multiplication map from $L^1_\om(G)\widehat{\otimes} X$ into $X$. In the case of group algebras i.e. when $\om=1$, it is well-known that this can be done by transfering the properties of the augmentation ideal of $L^1(G)$ into the multiplication ideal of $L^1(G)\widehat{\otimes} L^1(G) \cong L^1(G \times G)$ through the isometric isomorphism
$$\gamma \from L^1(G)\widehat{\otimes} L^1(G) \to L^1(G)\widehat{\otimes} L^1(G) \ \ , \ \gamma(m)(s,t)=m(s,st) \eqno{(*)},$$
and then deduce it for any module (\cite[Theorem 1.8]{S} and \cite[Theorem 2.9.65]{D}). However, in general, this idea can not be applied in its present form to the Beurling algebra $L^1_\om(G)$ because the map $\gamma$ may not be well-defined if we replace $L^1(G)$ with $L^1_\om(G)$ in $(*)$. Our approach is to consider a translation map similar to $\gamma$ but on
$L^1_\om(G)\widehat{\otimes} X$. This will allow us to look directly at the kernel of the multiplication map on $L^1_\om(G)\widehat{\otimes} X$ instead of relying on $L^1_\om(G)\widehat{\otimes} L^1_\om(G)$. However, in order to  do this, we need to consider vector-valued integration for Beurling algebras.

In Section \ref{S: Vec-BA}, we introduce the concept of a vector-valued Beurling algebra $L^1_\om(G,A)$ and the module $L^1_\om(G,X)$, where $A$ is an arbitrary Banach algebra and $X$ is a Banach left $A$-module. We show that  $L^1_\om(G)\widehat{\otimes} X\cong L^1_\om(G,X)$ can be regarded isometrically as a module over $L^1_\om(G,A)$.
When $\om=1$, this concept has been thoroughly developed in \cite[Chapter VIII]{FD} for cross-sectional algebras to construct examples of cross-products of C$^*$-algebras and C$^*$-algebra bundles.

For the rest of the paper, we restrict ourselves to the case when $G$ is abelian. In Sections \ref{S:Trans-op} and \ref{S:der space-aug ideal}, we use the idea in Section \ref{S: Vec-BA} for the case where $A$ is the Beurling measure algebra on a weight $\sg$ and $X$ is symmetric, to define a translation map such as $(*)$ from $L^1_{\om\os}(G)\widehat{\otimes} X$ into $L^1_\om(G)\widehat{\otimes} X$,
where $\os(t)=\sg(-t)$. This, in most of the desirable cases, connects the augmentation ideal of $L^1_{\om\os}(G)$ to the kernel of the multiplication map on $L^1_\om(G)\widehat{\otimes} X$ (Theorem \ref{T: Kernel-Agumentation}). Eventually we demonstrate that if the augmentation ideal of $L^1_{\om\os}(G)$ is essential, or equivalently, if there is no non-zero, continuous point derivation on the augmentation character, then the derivation space from  $L^1_\om(G)$ into $X$ vanishes
(Theorem \ref{T:Beurling alg-der}).

The reminder of this paper is devoted to investigating the weak amenability and 2-weak amenability of $L^1_\om(G)$ by applying the preceding results. 

In Section \ref{S:weak amen}, we shall show that $L^1_\om(G)$ is weakly amenable if $\inf \{ \Omega (nt)/n \mid n\in \N \}=0$ for all $t\in G$, where $\Omega(t):=\omega (t)\omega (-t)$. This follows from the observation that
the above assumption implies that there is no non-zero, continuous point derivation on the augmentation character
of $L^1_\Omega(G)$. This result extends the result of Gr{\o}nb{\ae}k and provides an alternative proof of it.

For a weight $\om\geq 1$, let $\om_1(s)=\dss\limsup_{t\to\infty} \frac{\om(t+s)}{\om(t)}$. In \cite{GZ}, it is shown that
$L^1_\om(G)$ is 2-weakly amenable if $\inf \{ \omega (nt)/n \mid n\in \N \}=0$ and $\om_1$ is bounded. In Section
\ref{S:2-weak amen}, we first show that there a weight $\sg_\om$ on $G$ which is closely related to $\om_1$. We then show that the result in \cite{GZ} mentioned above is a particular case of the fact that 2-weak amenablility of
$L^1_\om(G)$ follows if there is no non-zero, continuous point derivation on the augmentation character
of $L^1_{\omega\tilde{\sg}_\om}(G)$. Moreover, when $\sg_\om$ is bounded, there is a precise correspondence between the essentiality of the augmentation ideal of $L^1_\om(G)$ and 2-weak amenability of $L^1_\om(G)$ (Theorem \ref{T:2-weak amenable}). This fact allows us to classify various classes of weights for which their corresponding Beurling algebras are 2-weakly amenable or fail to be 2-weakly amenable. These weights, which are defined on compactely generated abelian groups, include polynomial weights, exponential weights, and certain weights satisfying condition (S) (Section  \ref{S:compactly generated groups}). For instance, for non-compact groups, we show that a Beurling algebra of a polynomial weight of degree $\alpha$ is 2-weakly amenable if and only if $0\leq \alpha <1$, whereas a Beurling algebra of an exponential weight of degree $\alpha$ is never 2-weakly amenable if $0< \alpha <1$. We extend the later result to a much larger class of symmetric weights for which the growth is exponential. However, we give examples of families of
non-symmetric weights with sharp exponential growth, for which the Beurling algebras are 2-weakly amenable.

\section{Preliminaries}\label{P}

Let $A$ be a Banach algebra, and let $X$ be a Banach $A$-bimodule. An operator $D\from A \to X$ is a
{\it derivation} if for all $a,b\in A$, $D(ab)=aD(b)+D(a)b$. For each
$x\in X$, the operator $ad_x\in B(A,X)$ defined by $ad_x(a)=ax-xa$
is a bounded derivation, called an {\it inner derivation}. Let $\mathcal{Z}^1(A,X)$ be the linear space of
all bounded derivations from $A$ into $X$. When $A$ is commutative, a Banach $A$-bimodule $X$ is {\it symmetric} if for all $a\in A$ and $x\in X$, $ax=xa$. In this case, we say simply that $X$ is a Banach
$A$-module.

Let $G$ be a locally compact group with a fixed left Haar measure
$\lambda$. The measure algebra $M(G)$ is the Banach space of
complex-valued, regular Borel measures on $G$. The space $M(G)$ is
identified  with the (dual) space of all continuous linear
functionals on the Banach space $C_0(G)$, with the duality
specified by setting
$$ \la \mu \ , \ f \ra=\int_G f(t)d\mu(t) \ \ \ (f\in C_0(G), \mu\in
M(G)).$$ The convolution multiplication $*$ on $M(G)$ defined by
setting
$$ \la \mu*\nu \ , \ f \ra=\int_G\int_G f(st)d\mu(s)d\nu(t) \ \ \ (f\in
C_0(G),\ \mu,\nu \in M(G)). $$ We write $\delta_s$ for the point
mass at $s\in G$; the element $\delta_e$ is the identity of
$M(G)$, and $l^1(G)$ is the closed subalgebra of $M(G)$ generated by the
point masses. Then $M(G)$ is a unital Banach algebra and $L^1(G)$,
the group algebra on $G$, is a closed ideal in $M(G)$
\cite[Theorem 3.3.36]{D}. Moreover, the dual of $L^1(G)$ can be identified with $L^\infty(G)$, the Banach space of Borel measuarable essentially bounded functions on $G$. We let $LUC(G)$ denote the closed subspace of $L^\infty(G)$ consisting of the (equivalence classes of) bounded left uniformly continuous functions on $G$.

Let $G$ be a locally compact group with identity $e$. A weight on $G$ is a continuous function
$\om \from G \to (0,\infty )$ such that
$$ \om (st)\leqslant \om (s)\om (t) \ \ \ (s,t \in G), \ \ \ \om (e)=1.$$

Let $X$ be a Banach space of measures or of equivalence classes of functions on a locally compact
group $G$, and let $\om \from G \to (0,\infty )$ be a continuous function. We define the Banach space
$$X(\om):=\{f \mid \om f \in X \},$$
where the norm of $X(\om)$ is defined so that the map $f \mapsto \om f$ from $X(\om)$ onto $X$
 is a linear isometry. In particular, we let $M_\om(G):=M(G)(\om)$, $L^1_\om(G):=L^1(G)(\om)$,
$l^1_\om(G):=l^1(G)(\om)$, $L^\infty_{1/\om}(G):=L^\infty(G)(1/\om)$, $LUC_{1/\om}(G):=LUC(G)(1/\om)$, and $C_{0,1/\om}(G):=C_0(G)(1/\om)$. When $\om$ is a weight, with the convolution multiplication of measures, $M_\om(G)$ becomes a Banach algebra, having $L^1_\om(G)$ as a closed two-sided ideal and $l^1_\om(G)$ as a closed subalgebra. Moreover,  $M_\om(G)=L_\om^1(G)=l^1_\om(G)$ if and
only if $G$ is discrete. Also $L^\infty_{1/\om}(G)$ is the dual of $L^1_\om(G)$, having $LUC_{1/\om}(G)$ and $C_{0,1/\om}(G)$ as Banach $L^1_\om(G)$-submodules. The algebras $L^1_\om(G)$ are the {\it Beurling algebras} on $G$. For more details see \cite[Chapter 7]{DL}.

\section{Vector-valued Beurling algebras}\label{S: Vec-BA}

Let $G$ be a locally compact group, let $\omega$ be a weight on $G$, and let $\om\lambda$ be the positive regular Borel measure on $G$
defined by $$\om\lambda(E)=\int_E \om(t)d\lambda(t), $$ where $E$ is an arbitrary $\lambda$-measurable set. It is well-known that
$\om\lambda$ is well-definded since $\om$ is continuous and positive.
Moreover, $E\subseteq G$ is $\om\lambda$-measurable if and only if $E$ is $\lambda$-measurable.

Our references for vector-valued integration theory is \cite{DF} and \cite{DU}. 
Let $(X, ||\cdot||_X)$ be a Banach space, and let $\mathfrak{L}^1_\om(G,X)$ be the set of all $\lambda$-measurable (or equivalently, $\om\lambda$-measurable) vector-valued functions $\kf \from G \to X$ such that $\int_G ||\kf(t)||\om(t)dt < \infty $ (see \cite[Appendix B.11]{DF} or \cite[Definition II.1.1]{DU} for the definition of vector-valued measurable functions). The functions in $\mathfrak{L}^1_\om(G,X)$ are called {\it Bochner $\om\lambda$-integrable} since for them the Bochner integral exists. It is clear that $\mathfrak{L}^1_\om(G,X)$ is a vector space with the standard addition and scalar multiplication.
Let $L^1_\om(G,X)$ be the equivalent classes of elements in
$\mathfrak{L}^1_\om(G,X)$ with respect to the semi-norm $\Vert \cdot \Vert=\int_G ||\cdot||_X\om(t)dt$, i.e.
$L^1_\om(G,X):=\mathfrak{L}^1_\om(G,X) / \sim$ where $\kf\sim \kg$ if and only if $\Vert \kf-\kg \Vert=0$.
Then $L^1_\om(G,X)$ is a Banach space with the above norm; it is called the Banach space of Bochner $\om\lambda$-integrable functions from $G$ into $X$ (see \cite[Appendix B.12]{DF} or \cite[Section II.2]{DU} for the details). It follows from the definition of Bochner integrable functions that the vector space
of the equivalent classes of integrable simple functions from $G$ into $X$, i.e. all the elements $\kf \from G \to X$ of the form
 $$\kf(\cdot) \sim \dss\sum_{i=1}^n \chi_{A_i}(\cdot) x_i,  \ \ \ A_i \ \text{is}  \ \lambda-\text{integrable}, \ \ \ x_i\in X $$
is dense in $L^1_\om(G,X)$ \cite[Appendix B.12]{DF} or \cite[Section II.2]{DU}. 

Let $C_{00}(G,X)$ denotes the set of all continuous functions from $G$ into $X$ with compact support. Then, for each $f\in C_{00}(G,X)$, $\im f$ is a compact metric space, and so, it is separable. Hence $f$ is $\lambda$-measurable from \cite[Appendix B.11(c)]{DF} or \cite[Theorem II.1.2]{DU}. Moreover, $C_{00}(G,X)$ approximates $X$-valued simple functions in norm, and so, it is norm-dense in $L^1_\om(G,X)$ (see \cite[Theorem B.11(d)]{DF} for the details).

The following proposition shows that, in certain cases, there is a convolution multiplication or action on $L^1_\om(G,X)$.

\begin{prop}\label{P: Bochner-Vector Alg}
Let $G$ be a locally compact group, let $\om$ be a weight on $G$, let $A$ be a Banach algebra, and let $X$ be a Banach left $A$-module. Then:\\
{\rm (i)} $L^1_\om(G,A)$ becomes a Banach algebra with the convolution multiplication $*$ specified by
$$ (\f*\g )(s)=\int_G \f(t)\g(s-t)dt \ \ \ (\f,\g \in C_{00}(G,A));$$
{\rm (ii)} $L^1_\om(G,X)$ becomes a Banach left $L^1_\om(G,A)$-module with the action specified by
$$ (\f*\g)(s)=\int_G \f(t)\g(s-t)dt \ \ \ (\f\in C_{00}(G,A), \g\in C_{00}(G,X)).$$
\end{prop}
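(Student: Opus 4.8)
The plan is to first establish both statements on the dense subspace $C_{00}(G,A)$ (resp. $C_{00}(G,X)$), obtain there the crucial submultiplicative norm estimate, and then extend everything by density and continuity to all of $L^1_\om(G,A)$ and $L^1_\om(G,X)$. First I would check that the convolution is well-defined on $C_{00}$: for $\f,\g\in C_{00}(G,A)$ and fixed $s$, the map $t\mapsto \f(t)\g(s-t)$ is continuous with compact support and takes values in $A$, so its Bochner integral exists; moreover $s\mapsto(\f*\g)(s)$ is again continuous, and if $\supp\f\subseteq K_1$ and $\supp\g\subseteq K_2$ then $\supp(\f*\g)\subseteq K_1+K_2$ is compact, whence $\f*\g\in C_{00}(G,A)\subseteq L^1_\om(G,A)$. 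The identical reasoning, using the action $A\times X\to X$ in place of the product of $A$, gives $\f*\g\in C_{00}(G,X)\subseteq L^1_\om(G,X)$ in part (ii).

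The heart of the argument is the estimate $\Vert\f*\g\Vert\leq\Vert\f\Vert\,\Vert\g\Vert$. Using that the norm of a Bochner integral is at most the integral of the norm, together with submultiplicativity $\Vert ab\Vert_A\leq\Vert a\Vert_A\Vert b\Vert_A$, I obtain $\Vert(\f*\g)(s)\Vert_A\leq\int_G\Vert\f(t)\Vert_A\Vert\g(s-t)\Vert_A\,dt$. Multiplying by $\om(s)$, invoking the weight inequality $\om(s)\leq\om(t)\om(s-t)$, and then applying the Tonelli theorem and the translation invariance of Haar measure (substituting $u=s-t$) yields $\int_G\Vert(\f*\g)(s)\Vert_A\,\om(s)\,ds\leq\Vert\f\Vert\,\Vert\g\Vert$. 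The module case is verbatim the same with the action bound $\Vert ax\Vert_X\leq\Vert a\Vert_A\Vert x\Vert_X$ replacing submultiplicativity. Since $C_{00}$ is norm-dense in the respective $L^1_\om$-space, this bounded bilinear map extends uniquely, with the same bound, to all of $L^1_\om(G,A)\times L^1_\om(G,A)$ (resp. $L^1_\om(G,A)\times L^1_\om(G,X)$).

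It then remains to verify the algebraic identities: associativity $(\f*\g)*\h=\f*(\g*\h)$ in (i), and the module identity $(\f*\g)*\h=\f*(\g*\h)$ with $\h\in C_{00}(G,X)$ in (ii). Both sides are jointly continuous by the preceding estimate, so it suffices to check them on the dense set $C_{00}$, where they reduce to interchanging the order of two Bochner integrals and invoking the associativity of $A$ (resp. the module axiom $a(bx)=(ab)x$). I expect the main obstacle to be the careful application of the Fubini theorem for vector-valued (Bochner) integrals, namely establishing joint measurability of the two-variable integrands and finiteness of the iterated integral of the norm, so that the interchange is legitimate; the cited references on vector-valued integration supply the requisite tools, and the norm estimate already established guarantees the needed integrability.
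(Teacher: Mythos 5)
Your proposal is correct and follows essentially the same route as the paper: define the convolution on $C_{00}(G,A)$ (resp.\ $C_{00}(G,X)$), derive the submultiplicative estimate $\Vert \f*\g\Vert \leqslant \Vert \f\Vert\,\Vert \g\Vert$ from the Bochner-integral norm inequality, submultiplicativity of the weight, and Fubini--Tonelli with translation invariance of Haar measure, and then extend by density and continuity. The only difference is that you spell out the verification of associativity and the module identity on the dense subspace, a routine step the paper subsumes under its closing appeal to continuity and density.
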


\begin{proof}
(i) Let $\f,\g\in C_{00}(G,A)$. It is clear that, for every $s\in G$, the map
$t\mapsto \f(t)\g(s-t)$ belongs to $C_{00}(G,A)$. Hence it is $\lambda$-measurable.
Thus, by \cite[Appendix B.13]{DF}, the Bochner integral $\int_G \f(t)\g(\cdot-t)dt$ exists. Moreover, similar to \cite[Section 3.5]{RS}, it can be shown that the map $s\mapsto (\f*\g )(s)$ is continuous and

\begin{eqnarray*}
 \int_G \Vert (\f*\g)(s)\Vert_A \om(s) ds &\leqslant & \int_G \int_G \Vert \f(t)\g(s-t)\Vert_A \om(s) dtds
\\ &\leqslant & \int_G \int_G \Vert \f(t) \Vert_A \Vert \g(s-t)\Vert_A \om(t)\om(s-t) dsdt
\\ & = & \Vert \f\Vert \Vert \g \Vert.
\end{eqnarray*}
Therefore $\f*\g\in L^1_\om(G,A)$ and, by \cite[Appendix B.13]{DF}, $\Vert \f*\g\Vert \leqslant \Vert \f\Vert \Vert \g\Vert$. The final results follows from that of continuity and the fact that $C_{00}(G,A)$ is dense in  $L^1_\om(G,A)$. 

The proof of (ii) is similar to (i).
\end{proof}

Let $(X, ||\cdot||_X)$ be a Banach space, let $\omega$ be a weight on $G$, and let $L^1_\om(G)\widehat{\otimes} X$ be the projective tensor product of $L^1_\om(G)$ and $X$. Let $f\in C_{00}(G)$ and $x\in X$, and consider the function $\f_x \from G \to X$ defined by
$$ \f _x (t)=f(t)x  \ \  (t\in G).$$
Clearly $ \f_x\in C_{00}(G,X)$ and $\Vert  \f_x\Vert = \Vert f  \Vert  \Vert  x \Vert $. Hence the map $\alpha_X \from C_{00}(G)\times X \to C_{00}(G,X)$ definded by
$$ \alpha_X(f\otimes x)=\f_x \ \ \ (f\in C_{00}(G), x\in X),$$
is well-definded, bilinear, and $\Vert \alpha_X(f\otimes x) \Vert= \Vert f  \Vert  \Vert  x \Vert $.
Therefore, by continuity, there is a unique operator, denoted also by  $\alpha_X$, from $L^1_\om(G)\widehat{\otimes} X$ into $L^1_\om(G,X)$ such that  $$ \alpha_X(f\otimes x)(t)=f(t)x \ \ \ (f\in C_{00}(G), t\in G, x\in X).$$
It is shown in \cite[Proposition 3.3, page 29]{DF} that $\alpha_X$ is a isometric linear isomorphism.

Now suppose that $A$ is a Banach algebra and that $X$ is a Banach left $A$-module. It is well-known that $L^1_\om(G)\widehat{\otimes} A$ turns into a Banach algebra along with the action
specified by
$$ (f\otimes a)(g\otimes b)=f*g\otimes ab \ \ \ (f,g\in L^1_\om(G), a,b \in A),$$
and $L^1_\om(G)\widehat{\otimes} X$ turns into a Banach left $L^1_\om(G)\widehat{\otimes} A$ -module with the action specified by
$$ (f\otimes a)(g\otimes x)=f*g\otimes ax \ \ \ (f,g\in L^1_\om(G), a \in A, x\in X).$$

The following theorem shows that, through $\alpha_X$, the preceding actions
coincide with their corresponding vector-valued convolution ones.

\begin{thm}\label{T: Alpha-algebraic}
Let $G$ be a locally compact group, let $\omega$ be a weight on $G$, let $A$ be a Banach algebra, and let $X$ be a Banach $A$-module. Then:\\
{\rm (i)}  for every $u\in L^1_\om(G)\widehat{\otimes} A$ and $v\in L^1_\om(G)\widehat{\otimes} X$,
$$  \alpha_X (uv)=\alpha_A(u)* \alpha_X(v);$$\\
{\rm (ii)}  $\alpha_A$ is an isometric algebraic isomorphism from $L^1_\om(G)\widehat{\otimes} A$ onto $L^1_\om(G,A)$.
\end{thm}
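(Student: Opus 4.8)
The plan is to prove (i) first and then to read off (ii) as the special case $X=A$. First I would note that both of the maps
$$(u,v)\mapsto \alpha_X(uv) \qquad\text{and}\qquad (u,v)\mapsto \alpha_A(u)*\alpha_X(v)$$
are bounded bilinear maps from $\bigl(L^1_\om(G)\widehat{\otimes} A\bigr)\times\bigl(L^1_\om(G)\widehat{\otimes} X\bigr)$ into $L^1_\om(G,X)$. The first is bounded because $\alpha_X$ is an isometry and the module action of $L^1_\om(G)\widehat{\otimes} A$ on $L^1_\om(G)\widehat{\otimes} X$ is contractive on elementary tensors (hence everywhere); the second because $\alpha_A$ and $\alpha_X$ are isometries and the convolution action is contractive by Proposition \ref{P: Bochner-Vector Alg}. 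Since $C_{00}(G)$ is norm-dense in $L^1_\om(G)$ and finite sums of elementary tensors are dense in the projective tensor products, it suffices to verify the identity on pairs $u=f\otimes a$ and $v=g\otimes x$ with $f,g\in C_{00}(G)$, $a\in A$ and $x\in X$.

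For such a pair the left side gives $uv=f*g\otimes ax$, whence $\alpha_X(uv)(s)=(f*g)(s)\,ax$. On the right side, $\alpha_A(u)$ and $\alpha_X(v)$ are the functions $t\mapsto f(t)a$ and $t\mapsto g(t)x$, which lie in $C_{00}(G,A)$ and $C_{00}(G,X)$ respectively, so the convolution formula of Proposition \ref{P: Bochner-Vector Alg} applies directly:
$$\bigl(\alpha_A(u)*\alpha_X(v)\bigr)(s)=\int_G \bigl(f(t)a\bigr)\bigl(g(s-t)x\bigr)\,dt.$$
Pulling the scalars out of the $A$-action, $\bigl(f(t)a\bigr)\bigl(g(s-t)x\bigr)=f(t)g(s-t)(ax)$, so the integral equals $\bigl(\int_G f(t)g(s-t)\,dt\bigr)ax=(f*g)(s)\,ax$, matching the left side. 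This proves (i) on elementary tensors, and the density argument of the first paragraph then gives (i) in general.

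For (ii), recall that $\alpha_A$ has already been shown (before the statement) to be an isometric linear isomorphism of $L^1_\om(G)\widehat{\otimes} A$ onto $L^1_\om(G,A)$, so it remains only to establish multiplicativity. Taking $X=A$ in (i), with $A$ regarded as a left module over itself so that the module action is precisely the algebra product, yields $\alpha_A(uv)=\alpha_A(u)*\alpha_A(v)$ for all $u,v\in L^1_\om(G)\widehat{\otimes} A$, which is exactly what is needed.

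The work here is bookkeeping rather than conceptual, and I do not expect a deeper obstacle. The point requiring the most care is confirming that both candidate bilinear maps are genuinely bounded, so that the reduction to the dense set of $C_{00}$-elementary tensors is legitimate; on the convolution side this rests on the contractivity recorded in Proposition \ref{P: Bochner-Vector Alg}, while on the algebraic side it uses submultiplicativity of convolution in $L^1_\om(G)$ together with the module inequality $\Vert ax\Vert_X\leqslant\Vert a\Vert_A\Vert x\Vert_X$. The only other delicate point is the correct commutation of the scalar coefficients past the $A$-action on $X$ in the elementary-tensor computation.
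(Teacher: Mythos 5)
Your proof is correct and takes essentially the same route as the paper: verify the identity on elementary tensors $f\otimes a$ and $g\otimes x$ with $f,g\in C_{00}(G)$ by the same pointwise computation moving the scalars past the module action, then extend by bilinearity, boundedness and density, and obtain (ii) from (i) with $X=A$ together with the previously established fact that $\alpha_A$ is an isometric linear isomorphism. The only difference is cosmetic: you spell out the boundedness of both bilinear maps that legitimizes the density reduction, a step the paper leaves implicit in its phrase ``it suffices to show.''
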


\begin{proof}
(i) It suffices to show that, for every $f,g \in
C_{00}(G)$, $a\in A$ and $x\in X$,
$$  \alpha_X (f*g\otimes ax)=\alpha_A(f\otimes a)* \alpha_X(g\otimes x).$$
Let $s\in G$. Then

\begin{eqnarray*}
 \alpha_X (f*g\otimes ax)(s) &=& (f*g)(s)ax \\
&=& ax\int_G f(t)g(s-t)dt  \\
&=& \int_G [f(t)a][g(s-t)x] dt \\
&=& \int_G \alpha_A(f\otimes a)(t) \alpha_X(g\otimes x)(s-t) dt
\\ & = & [\alpha_A(f\otimes a)* \alpha_X(g\otimes x)](s).
\end{eqnarray*}
This completes the proof.\\
(ii) Follows from (i) by replacing $X$ with $A$.
\end{proof}

Let $X$ be a Banach left $L^1_\om(G)$-module, and let $\om\geqslant 1$. Let $\pi^X_\om \from L^1_\om(G)\widehat{\otimes} X \to X$ and $\phi ^X_\om \from L^1_\om(G)\widehat{\otimes} X \to X$ be the normed-decreasing operators specified by
$$ \pi^X_\om (f\otimes x)=fx, \ \ \ \phi^X_\om (f\otimes x)=[\int_G f(t)dt]x \ \ \ (f\in L^1_\om(G), x\in X).$$
When there is no risk of ambiguity, we write $\pi$ instead of $\pi^X_\om$ and $\phi$ instead of $\phi ^X_\om$.

There is a vector-valued analogous of the above maps. However we need some introduction before defining them.
  
We recall that, if $A$ is a Banach algebra, then a Banach left $A$-module $X$ is
\emph{essential} if it is the closure of $AX=\text{span} \{ax \mid
a\in A, x\in X \}$.
Suppose that $A$ has a bounded approximate identity. Then, by Cohen's factorization theorem \cite[Corollary 2.9.26]{D}, $X=AX$, and so, $A$ has a bounded left approximate identity for $X$.

Let $X$ be an essential Banach left $L^1_\om(G)$-module. Then the action of
$L^1_\om(G)$ on $X$ can be extended to $M_\om(G)$  \cite[Theorem
7.14]{DL} so that for every $x\in X$, the mapping $t\mapsto \delta_tx$ from $G$ into $X$ is continuous. The following lemma allows us to construct the vector-valued version of $\pi^X_\om$.

\begin{lem}
Let $X$ be an essential Banach left $L^1_\om(G)$-module. Then, for every $\f\in L^1_\om(G,X)$, the mapping $t\mapsto \delta_t[\f(t)]$ from $G$ into $X$ is  Bochner $\lambda$-integrable.
\end{lem}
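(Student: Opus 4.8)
The plan is to check the two conditions defining Bochner $\lambda$-integrability for the map $t \mapsto \delta_t[\f(t)]$, namely $\lambda$-measurability together with finiteness of $\int_G \|\delta_t[\f(t)]\|_X\,d\lambda(t)$. The integrability condition is the easy one. Because $X$ is essential, the $L^1_\om(G)$-action extends to $M_\om(G)$, and since $\|\delta_t\|_{M_\om(G)} = \om(t)$, boundedness of the module action yields a constant $C$ with $\|\delta_t[\f(t)]\|_X \le C\,\om(t)\,\|\f(t)\|_X$ for all $t$. Integrating gives $\int_G \|\delta_t[\f(t)]\|_X\,d\lambda(t) \le C\int_G \|\f(t)\|_X\,\om(t)\,d\lambda(t) = C\|\f\| < \infty$. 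So everything reduces to proving $\lambda$-measurability.

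First I would treat $\f$ lying in the dense subspace $C_{00}(G,X)$, for which I claim $t \mapsto \delta_t[\f(t)]$ is actually continuous (and compactly supported, hence $\lambda$-measurable). The verification is a triangle-inequality splitting: for $s,t\in G$,
$$\|\delta_s[\f(s)] - \delta_t[\f(t)]\|_X \le C\,\om(s)\,\|\f(s)-\f(t)\|_X + \|\delta_s[\f(t)] - \delta_t[\f(t)]\|_X.$$
As $s\to t$ the first term vanishes since $\f$ is continuous and $\om$ is locally bounded, and the second vanishes by the continuity of $s\mapsto \delta_s[\f(t)]$ that the essentiality of $X$ provides.

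For general $\f \in L^1_\om(G,X)$, I would use the norm-density of $C_{00}(G,X)$ to pick $\f_n\in C_{00}(G,X)$ with $\f_n\to\f$ in $L^1_\om(G,X)$. Norm convergence forces convergence in $\om\lambda$-measure, and as $\om>0$ the $\om\lambda$- and $\lambda$-null sets agree, so after passing to a subsequence I may assume $\f_n(t)\to\f(t)$ for $\lambda$-almost every $t$. For each such $t$ the map $x\mapsto\delta_tx$ is a bounded (hence continuous) linear operator, so $\delta_t[\f_n(t)]\to\delta_t[\f(t)]$. Thus $t\mapsto\delta_t[\f(t)]$ is the $\lambda$-a.e. pointwise limit of the measurable maps $t\mapsto\delta_t[\f_n(t)]$ from the previous paragraph, and is therefore itself $\lambda$-measurable. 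Combined with the norm bound, this proves Bochner $\lambda$-integrability.

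I expect the measurability step to be the main obstacle: one cannot read it off directly, because only the separate continuities $t\mapsto\delta_t x$ and $x\mapsto\delta_t x$ are available and joint continuity of $(t,x)\mapsto\delta_t x$ is not. The way around it is exactly the two-stage argument above --- handle continuous compactly supported $\f$ by the explicit estimate, then transfer to arbitrary $\f$ via an a.e.-convergent approximating sequence, relying on the stability of strong measurability under pointwise almost-everywhere limits.
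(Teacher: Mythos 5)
Your proof is correct, and while it shares the paper's overall skeleton, the key middle step is genuinely different. Both arguments begin identically: the bound $\Vert \delta_t[\f(t)]\Vert \le C\,\om(t)\Vert \f(t)\Vert$ (from the extension of the action to $M_\om(G)$, available by essentiality) reduces everything to strong $\lambda$-measurability, and both conclude by writing $t\mapsto \delta_t[\f(t)]$ as a $\lambda$-a.e.\ pointwise limit of measurable maps, using an $L^1$-convergent approximating sequence, passage to an a.e.-convergent subsequence, and the coincidence of $\om\lambda$- and $\lambda$-null sets. The divergence is in the choice of dense class and how measurability is verified on it. The paper approximates $\f$ by integrable simple functions, straight from the definition of Bochner integrability, and is then forced to prove that $t\mapsto \chi_E(t)\delta_t x$ is $\lambda$-measurable; it does this via inner regularity of Haar measure (exhausting $E$ by compact sets $K_n$ up to a null set), continuity of the orbit map $t\mapsto \delta_t x$, separability of $\bigcup_n \{\delta_t x \mid t\in K_n\}$, and the Pettis-type measurability criterion of \cite[Theorem II.1.2]{DU}. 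You instead approximate by $C_{00}(G,X)$ and prove the stronger statement that $t\mapsto \delta_t[\f(t)]$ is itself continuous with compact support for such $\f$, via the splitting $\Vert \delta_s[\f(s)]-\delta_t[\f(t)]\Vert \le C\,\om(s)\Vert \f(s)-\f(t)\Vert + \Vert \delta_s[\f(t)]-\delta_t[\f(t)]\Vert$, which exploits precisely the two separate continuities (of $\f$, and of the orbit map) that are available --- correctly noting that joint continuity of $(t,x)\mapsto \delta_t x$ is neither available nor needed. Your route buys a cleaner approximant-measurability step (it needs no inner regularity or Pettis argument beyond the one already recorded in Section 2 for $C_{00}(G,X)$), and it makes explicit a continuity fact the paper itself uses tacitly in Theorem \ref{T: Transform TH}, where $\Lambda_X(\f)\in C_{00}(G,X)$ is asserted for $\f\in C_{00}(G,X)$; the paper's simple-function route, in exchange, proceeds directly from the definition of Bochner integrability without invoking the density of $C_{00}(G,X)$ in $L^1_\om(G,X)$.
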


\begin{proof}
Since there is $M>0$ such that $||\delta_t[\f(t)]||\leq M\om(t)||\f(t)||$ $(t\in G)$ and $\f\in L^1_\om(G,X)$, it suffices to show that the mapping $t\mapsto \delta_t[\f(t)]$ from $G$ into $X$ is  $\lambda$-measuarble. By \cite[Definition II.2.1]{DU}, there is a sequence of integrable simple functions $\{\f_n\}$ from $G$ into $X$ such that $$\lim_{n\to \infty} \int_G||\f_n(t)-\f(t)||d\lambda=0.$$ Without the loss of generality (by going to subsequences), this implies that $\lim_{n\to \infty} ||\f_n(t)-\f(t)||=0$ $\lambda$-almost everywhere. Therefore $$\lim_{n\to \infty} ||\delta_t[\f_n(t)]-\delta_t[\f(t)]||\leq M\om(t)\lim_{n\to \infty} ||\f_n(t)-\f(t)||=0$$
$\lambda$-almost everywhere. Hence we have the result if we show that for every $x\in X$ and a $\lambda$-measurable set $E$ with $\lambda(E)<\infty$, the mapping $$t\mapsto \delta_t[\chi_E(t)x]=\chi_E(t)\delta_tx$$ from $G$ into $X$ is  $\lambda$-measuarble. Since $\lambda(E)$ is finite, there is a sequence of compact sets $\{K_n\}$ such that $K_n\subset E$ for all $n\in \N$ and $\lambda(E\setminus \bigcup_{n=1}^\infty K_n)=0$. On the other hand, $X$ is essential so that the mapping $t\mapsto \delta_tx$ is continuous. Hence each $\{\delta_tx \mid t\in K_n\}$ is a compact metric space, and so, it is seperable. Thus $$ \{\delta_tx \mid t\in \bigcup_{n=1}^\infty K_n\}$$ is separable. Hence, by
\cite[Theorem II.1.2]{DU}, the mapping $t\mapsto \delta_t[\chi_E(t)x]$ is  $\lambda$-measuarble.
\end{proof}

Let $X$ be an essential Banach left $L^1_\om(G)$-module, and let $\Pi^X_\om \from L^1_\om(G,X) \to X$ and $\Phi^X_\om \from L^1_\om(G,X) \to X$ be the norm-decreasing operators defined by
$$ \Pi^X_\om (\f )=\int_G \delta_t[\f(t)] dt, \ \  \ \    \Phi^X_\om (\f)=\int_G \f(t) dt $$ for every $\f\in L^1_\om(G,X)$.
When there is no risk of ambiguity, we write $\Pi$ instead of $\Pi^X_\om$ and $\Phi$ instead of $\Phi ^X_\om$.

The following proposition establishes the relationship between the above maps.

\begin{prop}\label{P: pi_phi}
Let $G$ be a locally compact group, let $\omega, \sg \geq 1$ be weights on $G$, and let $X$ be an essential Banach left $L^1_\sg(G)$-module. Then:\\
{\rm (i)}  $\pi =  \Pi \circ\alpha_X$;\\
{\rm (ii)}  $\phi =\Phi  \circ \alpha_X$;\\
{\rm (iii)}  for every $\f\in L^1_\om(G,L^1_\sg(G))$ and $\g\in L^1_\om(G,X)$,
$$\Pi ^X_\om(\f*\g)=\Pi ^{L^1_\sg(G)}_\om(\f)\Pi ^X_\om(\g) \ \ \ \ \ \text{and} \ \ \ \  \Phi ^X_\om(\f*\g)=\Phi ^{L^1_\sg(G)}_\om(\f)\Phi ^X_\om(\g).$$
\end{prop}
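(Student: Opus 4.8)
My plan is to verify each identity on the dense subspace spanned by elementary tensors $f\otimes x$ with $f\in C_{00}(G)$ and $x\in X$, and then to extend by continuity, since $\pi,\phi,\Pi,\Phi$ are all norm-decreasing and $\alpha_X$ is an isometric isomorphism. For such a tensor $\alpha_X(f\otimes x)(t)=f(t)x$, so by linearity of the Bochner integral
$$\Phi(\alpha_X(f\otimes x))=\int_G f(t)x\,dt=\Big[\int_G f(t)\,dt\Big]x=\phi(f\otimes x),$$
which gives (ii). For (i) the same evaluation yields $\Pi(\alpha_X(f\otimes x))=\int_G\delta_t[f(t)x]\,dt=\int_G f(t)\,\delta_t x\,dt$, so the content of (i) is the integral representation $fx=\int_G f(t)\,\delta_t x\,dt$ of the module action. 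This is precisely the compatibility of the action with the extended action of $M_\sg(G)$ on the essential module $X$ (through which $t\mapsto\delta_t x$ is continuous): for $f\in C_{00}(G)$ the integral converges because $\supp f$ is compact and $\sg$ is locally bounded, and the identity is the restriction to $f\,d\lambda$ of the formula $\mu x=\int_G\delta_t x\,d\mu(t)$ valid for the extended action.

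For (iii) I would transport the problem to the projective tensor products by means of Theorem \ref{T: Alpha-algebraic}. Given $\f\in L^1_\om(G,L^1_\sg(G))$ and $\g\in L^1_\om(G,X)$, write $\f=\alpha_{L^1_\sg(G)}(u)$ and $\g=\alpha_X(v)$ with $u\in L^1_\om(G)\widehat{\otimes}L^1_\sg(G)$ and $v\in L^1_\om(G)\widehat{\otimes}X$, which is possible since $\alpha_{L^1_\sg(G)}$ and $\alpha_X$ are isometric isomorphisms. By Theorem \ref{T: Alpha-algebraic}(i), $\f*\g=\alpha_{L^1_\sg(G)}(u)*\alpha_X(v)=\alpha_X(uv)$, so applying (i) and (ii) converts the two claims of (iii) into the purely algebraic statements
$$\pi^X_\om(uv)=\pi^{L^1_\sg(G)}_\om(u)\,\pi^X_\om(v),\qquad \phi^X_\om(uv)=\phi^{L^1_\sg(G)}_\om(u)\,\phi^X_\om(v).$$
It suffices to check these on $u=f\otimes a$ and $v=g\otimes x$ with $f,g,a\in C_{00}(G)$ and $x\in X$ (again dense), where $uv=(f*g)\otimes(ax)$. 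For $\phi$ this is just the multiplicativity $\int_G(f*g)=\big(\int_G f\big)\big(\int_G g\big)$ of the augmentation character. For $\pi$ it becomes $(f*g)(ax)=(f*a)(gx)$ in $X$; writing both the $L^1_\om(G)$- and $L^1_\sg(G)$-actions through the one common $\delta_t$-action (with $\delta_s\delta_r=\delta_{s+r}$) reduces this to the associativity relation $h_1(h_2 x)=(h_1*h_2)x$ for $h_1,h_2\in C_{00}(G)$, so the two sides equal $(f*g*a)x$ and $(f*a*g)x$ respectively, and these agree by commutativity of convolution on $G$.

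The main obstacle I anticipate is bookkeeping rather than a new idea. In (i) one must justify that the bounded operator $\delta_s$ passes through the Bochner integral and that $\int_G f(t)\,\delta_t x\,dt$ converges; both are handled by the standard properties of the Bochner integral together with the continuity of $t\mapsto\delta_t x$. In (iii) the delicate point is to thread the $L^1_\om(G)$- and $L^1_\sg(G)$-module structures on $X$ consistently, so that the cross-associativity $h_1(h_2 x)=(h_1*h_2)x$ is legitimate; restricting to $C_{00}(G)$ entries keeps every convolution compactly supported and thereby sidesteps the mismatch between the weights $\om$ and $\sg$. Routing the convolution identity $\f*\g=\alpha_X(uv)$ through Theorem \ref{T: Alpha-algebraic} avoids running a vector-valued Fubini argument by hand, so that the only genuinely analytic input left is the commutativity of convolution.
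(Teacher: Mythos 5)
Your proposal is correct and essentially reproduces the paper's own proof: parts (i) and (ii) are verified on elementary tensors $f\otimes x$ with $f\in C_{00}(G)$ using the extended $M_\sg(G)$-action (so that $fx=\int_G f(t)\delta_t x\,dt$), and part (iii) is deduced by transporting through the maps $\alpha$ via Theorem~\ref{T: Alpha-algebraic}(i) together with the multiplicativity of $\pi$ and $\phi$ on the projective tensor products. The only difference is that you write out the multiplicativity check that the paper dismisses as ``straightforward to check,'' making explicit the commutativity of convolution it silently uses --- which is harmless, since the statement is written in additive notation and the proposition is only ever applied with $G$ abelian.
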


\begin{proof}
(i) Since $X$ is essential, the action of
$L^1_\sg(G)$ on $X$ can be extended to $M_\sg(G)$ and $X$ becomes a unital Banach $M_\sg(G)$-module  \cite[Theorem
7.14]{DL}. Now to prove our result, it suffices to show that, for every $f \in C_{00}(G)$ and $x\in X$,
$$ \pi(f\otimes x)=\Pi (\alpha_X(f\otimes x)).$$
We have
\begin{eqnarray*}
\pi(f\otimes x) &=& \int_G f(t)\delta_tx dt  \\
&=& \int_G \delta_t[f(t)x]dt \\
&=& \int_G \delta_t [\alpha_X(f\otimes x)(t)]dt \\
& = & \Pi (\alpha_X(f\otimes x)).
\end{eqnarray*}
(ii) Let $f \in C_{00}(G)$ and $x\in X$. Then
$$ \phi(f\otimes x)=[\int_G f(t)dt]x=\int_G \alpha_X(f\otimes x)(t)dt=\Phi (\alpha_X(f\otimes x)).$$
The final result follows from the continuity.\\
(iii) It is straightforward to check that, for every  $u\in L^1_\om(G)\widehat{\otimes} L^1_\sg(G)$ and $v\in L^1_\om(G)\widehat{\otimes} X$,
$$\pi ^X_\om(uv)=\pi ^{L^1_\sg(G)}_\om(u)\pi ^X_\om(v) \ \ \ \ \ \text{and} \ \ \ \ \phi ^X_\om(uv)=\phi ^{L^1_\sg(G)}_\om(u)\phi ^X_\om(v).$$
Thus the result follows from parts (i), (ii) and Theorem \ref{T: Alpha-algebraic}(i).
\end{proof}

The following corollary is an immediate consequence of Proposition \ref{P: pi_phi}.

\begin{cor}\label{C: classification of kernel}
Let $G$ be a locally compact group, let $\omega \geqslant 1$ be a weight on $G$, and let $X$ be an essential Banach left $L^1_\om(G)$-module. Then:\\
$($i$)$ $\alpha_X(\ker \pi)=\ker \Pi$;\\
$($ii$)$ $\alpha_X(\ker \phi)=\ker \Phi.$
\end{cor}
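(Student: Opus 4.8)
The plan is to read off both identities directly from the factorizations $\pi = \Pi \circ \alpha_X$ and $\phi = \Phi \circ \alpha_X$ supplied by Proposition \ref{P: pi_phi}(i),(ii), combined with the fact that $\alpha_X$ is a bijection. Recall that $\alpha_X$ is known to be an isometric linear isomorphism from $L^1_\om(G)\widehat{\otimes} X$ onto $L^1_\om(G,X)$ (the statement following the definition of $\alpha_X$, from \cite[Proposition 3.3, page 29]{DF}); in particular it is both injective and surjective. The underlying general principle I will invoke is that whenever a linear map factors as $f = g\circ\alpha$ with $\alpha$ a bijection, one has $\alpha(\ker f)=\ker g$, and the corollary is simply the two instances of this principle with $(f,g)=(\pi,\Pi)$ and $(f,g)=(\phi,\Phi)$.

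For part (i) I would argue by double inclusion. For the forward inclusion, I take $w\in\alpha_X(\ker\pi)$, write $w=\alpha_X(v)$ with $v\in\ker\pi$, and compute $\Pi(w)=\Pi(\alpha_X(v))=\pi(v)=0$ by Proposition \ref{P: pi_phi}(i), so $w\in\ker\Pi$. For the reverse inclusion, I take $w\in\ker\Pi$; since $\alpha_X$ is surjective there exists $v\in L^1_\om(G)\widehat{\otimes} X$ with $w=\alpha_X(v)$, and then $\pi(v)=\Pi(\alpha_X(v))=\Pi(w)=0$, whence $v\in\ker\pi$ and $w=\alpha_X(v)\in\alpha_X(\ker\pi)$. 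Part (ii) is verbatim the same argument with $\pi,\Pi$ replaced by $\phi,\Phi$ and Proposition \ref{P: pi_phi}(ii) used in place of (i).

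There is no genuine obstacle here, which is why the result is labelled an immediate consequence. The only point deserving attention is the reverse inclusion in each part: without surjectivity of $\alpha_X$ one would obtain only $\alpha_X(\ker\pi)\subseteq\ker\Pi$. Since $\alpha_X$ is an isometric isomorphism, surjectivity is guaranteed, and the two inclusions close up to give equality.
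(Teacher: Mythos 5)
Your proof is correct and matches the paper's intended argument: the paper states the corollary as an immediate consequence of Proposition \ref{P: pi_phi}, with the equalities $\alpha_X(\ker\pi)=\ker\Pi$ and $\alpha_X(\ker\phi)=\ker\Phi$ following from the factorizations $\pi=\Pi\circ\alpha_X$, $\phi=\Phi\circ\alpha_X$ together with the fact that $\alpha_X$ is an isometric isomorphism of $L^1_\om(G)\widehat{\otimes} X$ onto $L^1_\om(G,X)$. Your double-inclusion argument, and in particular your observation that surjectivity of $\alpha_X$ is what upgrades the trivial inclusion $\alpha_X(\ker\pi)\subseteq\ker\Pi$ to an equality, spells out exactly what the paper leaves implicit.
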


\section{Translation of operators}\label{S:Trans-op}
Throughout the rest of this paper, we let $G$ be a locally compact abelian group and all modules be symmetric.
Let $\om$ be a weight on $G$. We shall consider the following auxiliary weight on $G$:
$$ \oo (t)=\om(-t) \ \ \ (t\in G). $$
In this section, we show that how can we transfer information from $\ker \Phi ^X_{\om\os}$
into $\ker \Pi ^X_\om$.

\begin{thm}\label{T: Transform TH}
Let $\omega$ and $\sg$ be weights on $G$, and let $X$ be an essential Banach $L^1_\sg(G)$-module. Then
there is an operator $\Lambda_X \from L^1_{\om\os}(G,X) \to  L^1_{\om}(G,X)$ such that:\\
$($i$)$ for every $\f \in C_{00}(G,X)$, $\Lambda_X(\f)(t)=\delta_{-t}[\f(t)]$;\\
$($ii$)$ $\Lambda_X$ is norm-decreasing linear map with the dense range;\\
$($iii$)$  if $\sg=1$, then $\Lambda_X$ is an isometric isomorphism on $L^1_{\om}(G,X)$;\\
$($iv$)$ for every $\f \in L^1_{\om\os}(G,L^1_\sg(G))$ and $g\in L^1_{\om\os}(G,X)$,
$$ \Lambda_X(\f*\g)=\Lambda_{L^1_\sg(G)}(\f)*\Lambda_X(\g).$$
\end{thm}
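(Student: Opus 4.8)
The plan is to construct $\Lambda_X$ first on the dense subspace $C_{00}(G,X)$ by the formula in (i) and then extend it by continuity. First I would check that $\Lambda_X(\f)$ genuinely lies in $C_{00}(G,X)$ for $\f\in C_{00}(G,X)$. Since $X$ is essential, the map $t\mapsto\delta_t x$ is continuous for each fixed $x\in X$, and combined with the contractivity bound $\|\delta_s y\|\leqslant\sg(s)\|y\|$ (coming from the extension of the action to a contractive $M_\sg(G)$-module via \cite[Theorem 7.14]{DL}, together with $\|\delta_s\|_{M_\sg(G)}=\sg(s)$) one gets that $(s,x)\mapsto\delta_s[x]$ is jointly continuous on compact subsets of $G\times X$. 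Hence $t\mapsto\delta_{-t}[\f(t)]$ is continuous and supported in $\supp\f$, so it belongs to $C_{00}(G,X)\subseteq L^1_\om(G,X)$. The key estimate is then
$$\int_G\|\delta_{-t}[\f(t)]\|\,\om(t)\,dt\leqslant\int_G\sg(-t)\|\f(t)\|\,\om(t)\,dt=\int_G\|\f(t)\|\,\om(t)\os(t)\,dt=\|\f\|_{L^1_{\om\os}(G,X)},$$
which shows $\Lambda_X$ is norm-decreasing from $(C_{00}(G,X),\|\cdot\|_{\om\os})$ into $L^1_\om(G,X)$. As $C_{00}(G,X)$ is dense in $L^1_{\om\os}(G,X)$, this extends uniquely to a norm-decreasing linear operator on all of $L^1_{\om\os}(G,X)$, giving (i) and the first half of (ii).

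For the dense range in (ii) I would introduce the candidate inverse $\Psi_X$ sending $\g$ to $t\mapsto\delta_t[\g(t)]$. The same joint-continuity argument shows $\Psi_X$ maps $C_{00}(G,X)$ into $C_{00}(G,X)$, and since $\delta_{-t}*\delta_t=\delta_e$ acts as the identity on the unital $M_\sg(G)$-module $X$, one has $\Lambda_X(\Psi_X(\h))=\h$ for every $\h\in C_{00}(G,X)$. Thus the range of $\Lambda_X$ contains $C_{00}(G,X)$, which is dense in $L^1_\om(G,X)$, proving (ii). For (iii), when $\sg=1$ we have $\os=1$, so $L^1_{\om\os}(G,X)=L^1_\om(G,X)$ isometrically and $\|\delta_s\|_{M(G)}=1$; the estimate above, applied to both $\Lambda_X$ and $\Psi_X$, shows both are norm-decreasing, and since they are mutually inverse on the dense set $C_{00}(G,X)$ they extend to mutually inverse norm-decreasing operators on $L^1_\om(G,X)$, forcing each to be an isometric isomorphism.

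The substance is in (iv). By boundedness of both sides and density of $C_{00}$ in the relevant projective-type product, it suffices to verify the identity for $\f\in C_{00}(G,L^1_\sg(G))$ and $\g\in C_{00}(G,X)$. Writing out $(\f*\g)(s)=\int_G\f(r)\g(s-r)\,dr$ via the action of Proposition \ref{P: Bochner-Vector Alg}(ii), and using that the bounded operator $\delta_{-s}$ passes inside the Bochner integral \cite[Appendix B.13]{DF}, the crux is the pointwise algebraic identity
$$\delta_{-s}[\f(r)\g(s-r)]=[\delta_{-r}*\f(r)][\delta_{-(s-r)}\g(s-r)],$$
obtained from the factorization $\delta_{-s}=\delta_{-r}*\delta_{-(s-r)}$ together with commutativity of $M_\sg(G)$ and the compatibility $\mu\cdot(a\cdot y)=(\mu*a)\cdot y$. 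Recognizing the two factors as $\Lambda_{L^1_\sg(G)}(\f)(r)$ and $\Lambda_X(\g)(s-r)$ and integrating in $r$ yields exactly $[\Lambda_{L^1_\sg(G)}(\f)*\Lambda_X(\g)](s)$. I expect the main obstacle to be bookkeeping this algebraic identity correctly — keeping straight which factor carries the convolution action on $L^1_\sg(G)$ and which carries the module action on $X$, and justifying the interchange of $\delta_{-s}$ with the integral — rather than any analytic difficulty, since every estimate reduces to the single contractivity bound $\|\delta_s y\|\leqslant\sg(s)\|y\|$.
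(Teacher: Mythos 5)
Your proposal is correct and follows essentially the same route as the paper's proof: define $\Lambda_X$ on $C_{00}(G,X)$ by the formula in (i), obtain norm-decrease from the pointwise bound $\Vert \delta_{-t}[\f(t)]\Vert \leqslant \sg(-t)\Vert \f(t)\Vert$, note that $\Lambda_X$ maps $C_{00}(G,X)$ onto $C_{00}(G,X)$ (your $\Psi_X$ is exactly the inverse the paper leaves implicit, and it yields (ii) and (iii) just as in the paper), and verify (iv) on $C_{00}$ via the factorization $\delta_{-s}=\delta_{-t}*\delta_{t-s}$ and the module compatibility, extending everything by continuity. The one step you omit is the paper's opening reduction, via \cite[Theorem 7.44]{DL}, to the case $\sg \geqslant 1$, which is what licenses invoking \cite[Theorem 7.14]{DL} to extend the essential module action to $M_\sg(G)$ with the contractive bound $\Vert \delta_s y\Vert \leqslant \sg(s)\Vert y\Vert$ that all your estimates rest on; this is a normalization rather than a different idea.
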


\begin{proof}
By \cite[Theorem 7.44]{DL}, we can assume that $\sg\geq 1$. Let $\f \in C_{00}(G,X)$ and define $\Lambda_X(\f)(t):=\delta_{-t}[\f(t)]$ for every $t\in G$. Clearly
$\Lambda_X(\f) \in C_{00}(G,X)$. Let $\Vert \cdot \Vert_{\om\os}$ and $\Vert \cdot \Vert_\om$ denote
the norms on $L^1_{\om\os}(G,X)$ and $L^1_{\om}(G,X)$, respectively. Then
\begin{eqnarray*}
\Vert\Lambda_X(\f) \Vert_\om &=& \int_G \Vert\Lambda_X(\f)(t) \Vert \om(t) dt \\
&=& \int_G\Vert \delta_{-t}[\f(t)] \Vert \om(t) dt \\
&\leqslant & \int_G \Vert \f(t) \Vert \Vert \delta_{-t}\Vert \om(t) dt  \\
&=& \int_G \Vert \f(t) \Vert \sg(-t) \om(t) dt  \\
& = & \Vert \f \Vert_{\om\os}.
\end{eqnarray*}
Hence $\Lambda_X$ can be extended to a norm-decreasing linear operator from $L^1_{\om\os}(G,X)$ into  $L^1_{\om}(G,X)$.
Moreover, it is easy to see that $\Lambda_X(C_{00}(G,X))=C_{00}(G,X)$. Hence the image of  $\Lambda_X$ is dense in $L^1_{\om}(G,X)$
i.e. (i) and (ii) hold. 

For (iv), let $\f \in C_{00}(G,L^1_\sg(G))$, $\g\in C_{00}(G,X)$, and $s\in G$. Then
\begin{eqnarray*}
[\Lambda_{L^1_\sg(G)}(\f)*\Lambda_X(\g)](s) &=& \int_G \Lambda_{L^1_\sg(G)}(\f)(t)\Lambda_X(\g)(s-t) dt \\
&=& \int_G [\delta_{-t}*\f(t)][ \delta_{t-s}\g(s-t)] dt \\
&=& \int_G [\delta_{-t}*\f(t)*\delta_{t-s}]\g(s-t) dt \\
&=& \int_G [\delta_{-s}*\f(t)]\g(s-t)  dt  \\
&=& \int_G \delta_{-s}[\f(t)\g(s-t)]  dt  \\
&=& \delta_{-s} \int_G \f(t)\g(s-t)  dt  \\
& = & \delta_{-s} [(\f*\g)(s)] \\
& = & \Lambda_X(\f*\g)(s).
\end{eqnarray*}
The final result follows from continuity. 

Finally, (iii) follows since $\Vert\Lambda_X(\f) \Vert_\om=\Vert \f \Vert_\om$ if $\sg=1$.
\end{proof}

\begin{thm}\label{T: Kernel-Agumentation}
Let $\omega$ and $\sg$ be weights on $G$ such that $\om \geqslant 1$, $\om \geqslant \sg$, $\om\os \geqslant 1$, and let $X$ be an essential
Banach $L^1_\sg(G)$-module. Then $\Lambda_X(\ker \Phi ^X_{\om\os})$ is dense in $\ker \Pi^X_\om$.
If, in addition, $\sg=1$, then $\Lambda_X(\ker \Phi ^X_\om)=\ker \Pi^X_\om$.
\end{thm}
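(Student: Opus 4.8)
The plan is to exploit the intertwining relation from Theorem~\ref{T: Transform TH}(iv) to convert the multiplicative structure of $\Pi$ into the additive structure of $\Phi$ under $\Lambda_X$. First I would establish the key algebraic identity
\begin{equation*}
\Pi^X_\om \circ \Lambda_X = \Phi^X_{\om\os}
\end{equation*}
as operators $L^1_{\om\os}(G,X) \to X$. It suffices to verify this on the dense set $C_{00}(G,X)$: for $\f \in C_{00}(G,X)$,
\begin{equation*}
\Pi^X_\om(\Lambda_X(\f)) = \int_G \delta_t\big[\Lambda_X(\f)(t)\big]\,dt = \int_G \delta_t\big[\delta_{-t}[\f(t)]\big]\,dt = \int_G \f(t)\,dt = \Phi^X_{\om\os}(\f),
\end{equation*}
using part (i) of Theorem~\ref{T: Transform TH} and the fact that $X$ is a unital $M_\sg(G)$-module so that $\delta_t \delta_{-t}$ acts as the identity. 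The hypotheses $\om \geq 1$, $\om \geq \sg$, and $\om\os \geq 1$ are exactly what guarantees that all the relevant $\Pi$ and $\Phi$ maps are defined (i.e.\ that the modules in question are essential Banach $L^1$-modules over the appropriate algebras with weights $\geq 1$).

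From the identity $\Pi^X_\om \circ \Lambda_X = \Phi^X_{\om\os}$, the inclusion $\Lambda_X(\ker\Phi^X_{\om\os}) \subseteq \ker\Pi^X_\om$ is immediate. For the density, I would argue as follows. Take any $\f \in \ker\Pi^X_\om$; I want to approximate it in $\om$-norm by elements of $\Lambda_X(\ker\Phi^X_{\om\os})$. By Theorem~\ref{T: Transform TH}(ii), the range of $\Lambda_X$ is dense in $L^1_\om(G,X)$, so there is a net (or sequence) $\g_j \in L^1_{\om\os}(G,X)$ with $\Lambda_X(\g_j) \to \f$ in $\om$-norm. Applying the continuous operator $\Pi^X_\om$ and using the identity gives $\Phi^X_{\om\os}(\g_j) = \Pi^X_\om(\Lambda_X(\g_j)) \to \Pi^X_\om(\f) = 0$ in $X$. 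So each $\g_j$ is only \emph{approximately} in $\ker\Phi^X_{\om\os}$, and I must correct it.

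The correction is the main obstacle, and I expect to handle it by subtracting a controlled term. I would fix an element $\h \in L^1_{\om\os}(G,X)$ with $\Phi^X_{\om\os}(\h) = x_0$ for a chosen $x_0$ (realizable since $\Phi^X_{\om\os}$ is surjective onto $X$ via a point mass representative), and replace $\g_j$ by $\g_j' := \g_j - \h_j$, where $\h_j \in L^1_{\om\os}(G,X)$ is chosen with $\Phi^X_{\om\os}(\h_j) = \Phi^X_{\om\os}(\g_j)$ and $\|\Lambda_X(\h_j)\|_\om \to 0$. Concretely, since $\Phi^X_{\om\os}(\g_j) \to 0$ in $X$, I would take $\h_j$ to be a suitably scaled bump concentrated near $0 \in G$ representing $\Phi^X_{\om\os}(\g_j)$; because $\os$ and $\om$ are continuous with $\om(0)\os(0)=1$, such bumps have $\om$-norm comparable to $\|\Phi^X_{\om\os}(\g_j)\|_X$, and $\Lambda_X$ acts nearly as the identity on functions concentrated near $0$ (where $\delta_{-t} \approx \delta_0$), forcing $\|\Lambda_X(\h_j)\|_\om \to 0$. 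Then $\g_j' \in \ker\Phi^X_{\om\os}$ and $\Lambda_X(\g_j') = \Lambda_X(\g_j) - \Lambda_X(\h_j) \to \f$, giving the desired density.

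For the final claim, when $\sg = 1$ the map $\Lambda_X$ is an isometric isomorphism of $L^1_\om(G,X)$ onto itself by Theorem~\ref{T: Transform TH}(iii). In that case $\os = 1$ as well, so the identity becomes $\Pi^X_\om \circ \Lambda_X = \Phi^X_\om$ with $\Lambda_X$ bijective and isometric. Hence $\ker\Pi^X_\om = \Lambda_X(\Lambda_X^{-1}(\ker\Pi^X_\om)) = \Lambda_X(\ker(\Pi^X_\om \circ \Lambda_X)) = \Lambda_X(\ker\Phi^X_\om)$, and no approximation is needed; the image is already closed because $\Lambda_X$ is a homeomorphism, upgrading density to equality.
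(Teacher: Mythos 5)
Your proof is correct, but your density argument takes a genuinely different route from the paper's. Both proofs begin the same way, with the intertwining identity $\Pi^X_\om \circ \Lambda_X = \Phi^X_{\om\os}$ verified on $C_{00}(G,X)$ (the paper carries out exactly this computation, after normalizing $\sg\geq 1$ via \cite[Theorem 7.44]{DL} so that $X$ is a unital $M_\sg(G)$-module and, since $\sg\leq\om$, a unital $M_\om(G)$-module), and both handle $\sg=1$ via Theorem \ref{T: Transform TH}(iii). The difference is where the correction is performed. The paper starts from $\f\in\ker\Pi^X_\om$, approximates it by $\g\in C_{00}(G,X)$, and must then kill the small error $x_0=\Pi(\g)\in X$ \emph{on the $\Pi$ side}; since $\Pi(u\otimes x)=u\cdot x\neq x$ for a bump $u$, no single bump represents $x_0$ exactly, and the paper instead builds $\h=\alpha_X\bigl(\sum_{n} e_n\otimes x_n\bigr)$ by a telescoping construction $x_{n+1}=x_n-e_nx_n$ with a compactly supported bounded approximate identity, getting $\Pi(\h)=x_0$, $\Vert\h\Vert<3M\epsilon$, and then pulls $\g-\h\in\ker\Pi\cap C_{00}(G,X)$ back through $\Lambda_X$ pointwise via ${\bf m}(t)=\delta_t[(\g-\h)(t)]$. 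You instead approximate $\f$ through the dense range of $\Lambda_X$ and correct \emph{on the $\Phi$ side}, where a bump works exactly: $\Phi^X_{\om\os}(u_j\otimes x_j)=(\int u_j)\,x_j=x_j$, so subtracting $\h_j=u_j\otimes x_j$ with $x_j=\Phi^X_{\om\os}(\g_j)\to 0$ lands $\g_j-\h_j$ in $\ker\Phi^X_{\om\os}$ on the nose. This sidesteps the paper's approximate-identity series entirely, which is the main technical device of its proof. One small repair to your write-up: the justification that $\Vert\Lambda_X(\h_j)\Vert_\om\to 0$ does not need the heuristic that ``$\Lambda_X$ acts nearly as the identity near the origin'' (nor the aside about point-mass representatives, which are not in $L^1$); since $\Lambda_X$ is norm-decreasing by Theorem \ref{T: Transform TH}(ii) and $u_j$ is supported in a fixed compact neighborhood $U$ of $e$, one simply has $\Vert\Lambda_X(\h_j)\Vert_\om\leq\Vert\h_j\Vert_{\om\os}\leq\bigl(\sup_{t\in U}\om(t)\os(t)\bigr)\Vert x_j\Vert_X\to 0$. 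With that one-line estimate in place of the heuristic, your argument is complete; what the paper's longer route buys in exchange is the stronger intermediate fact that $\ker\Pi^X_\om\cap C_{00}(G,X)$ lies exactly in $\Lambda_X(\ker\Phi^X_{\om\os})$, together with an explicit $(1+3M)\epsilon$ bound, whereas yours is shorter and isolates more cleanly why the $\Phi$ side is the right place to correct.
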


\begin{proof}
For simplicity, we set $\Pi:=\Pi^X_\om$ and $\Phi:=\Phi ^X_{\om\os}$. We first note that, by \cite[Theorem 7.44]{DL},
we can assume that $\sg\geq 1$. Thus $X$ becomes a unital Banach $M_\sg(G)$-module \cite[Theorem
7.14]{DL} or \cite{D}. Also, from $\sg \leqslant \om$, it follows easily that $X$ is both a unital
Banach $M_\om(G)$-module and an essential Banach $L^1_\om(G)$-module.
Now let $\f \in \ker \Phi$. Then
\begin{eqnarray*}
\Pi [(\Lambda_X)(\f)] &=& \int_G \delta_t[\Lambda_X(\f)(t)] dt \\
&=& \int_G \delta_t [\delta_{-t}\f(t)]dt \\
&=& \int_G [\delta_t*\delta_{-t}]\f(t)dt \\
&=& \int_G \delta_e\f(t)dt \\
&=& \int_G \f(t)dt \\
&=& 0.
\end{eqnarray*}
Hence $\Lambda_X(\ker \Phi) \subseteq \ker \Pi$. 

Now suppose that $\f \in \ker \Pi$ and $\epsilon >0$.
There is $\g \in C_{00}(G,X)$ such that $\Vert \f - \g \Vert  < \epsilon$.
Let $U$ be a compact neighborhood of $e$, and let $\{ e_i \}_{i\in I}$ be a bounded approximate identity in $L^1_\om(G)$
for $X$.
It is well-known that $\{ e_i \}_{i\in I}$ can be chosen such that
$$ \{ e_i \}_{i\in I} \subset C_{00}(G) \ \ \ \ \text{and} \ \ \ \ \supp\ e_i \subseteq U \ \ \ (i\in I).  \eqno{(1)}$$
Let $M>0$ be an upper bound for $\{ e_i \}_{i\in I}$. Define, by induction on $n$, $x_n\in X$ and $e_n\in \{ e_i \}_{i\in I}$
such that
$$ x_0:=\Pi (\g), \ \ \Vert x_n-e_nx_n \Vert < 2^{-n}\epsilon \  \ \ \ \text{and} \ \ \ \ \ x_{n+1}:=x_n-e_nx_n . \eqno{(2)}$$
We first observe that, for each $n\in \N$, $\Vert x_n \Vert < 2^{-n+1}\epsilon$. Also
$$ \Vert x_0 \Vert = \Vert \Pi (\g) \Vert= \Vert \Pi (\f-\g) \Vert \leqslant \Vert \f-\g \Vert< \epsilon .$$
Hence
$$ \dss\sum_{n=0}^\infty \Vert e_n \Vert \Vert x_n \Vert \leqslant M\epsilon +
M \dss\sum_{n=1}^\infty 2^{-n+1}\epsilon= 3M\epsilon.$$
Thus, if we put $\h=\alpha_X\left\{\dss\sum_{n=0}^\infty  e_n \otimes x_n  \right \} $, then, from (1),
$$ \h\in C_{00}(G,X), \ \ \ \ \text{and} \ \ \ \ \Vert \h \Vert < 3M\epsilon. \eqno{(3)}$$
Moreover, from Proposition \ref{P: pi_phi}(i) and (2),
\begin{eqnarray*}
\Pi (\h)
&=& \Pi [\alpha_X(\dss\sum_{n=0}^\infty  e_n \otimes x_n)] \\
&=& \dss\sum_{n=0}^\infty e_nx_n \\
&=&  \dss\sum_{n=0}^\infty (x_n-x_{n+1}) \\
&=& x_0 \\
&=& \Pi (\g).
\end{eqnarray*}
Hence $\g-\h \in \ker \Pi  \cap C_{00}(G,X) $. Thus if we put ${\bf m}(t)=\delta_t(\g-\h)(t)$, then it follows that
${\bf m} \in \ker \Phi$ and $\Lambda_X({\bf m})=\g-\h$. Therefore
$$ \g-\h \in \Lambda_X(\ker \Phi).   $$
Finally, from (3), we have
\begin{eqnarray*}
\Vert \f - (\g-\h) \Vert
&\leqslant & \Vert \f -\g \Vert + \Vert \h \Vert \\
&\leqslant &  \epsilon + 3M\epsilon  \\
&=& (1+ 3M)\epsilon  .
\end{eqnarray*}
Hence $\Lambda_X(\ker \Phi)$ is dense in $\ker \Pi$. 

When $\sg=1$, by Theorem \ref{T: Transform TH}(iii), $\Lambda_X$ is an isometry on $L^1_\om(G,X)$. Thus $\Lambda_X(\ker \Phi ^X_\om)$ is norm-closed. Therefore it is the same as $\ker \Pi ^X_\om$.
\end{proof}

\section{the space of derivations and the augmentation ideal}\label{S:der space-aug ideal}

Let $A$ be a Banach algebra, and let $\varphi$ be a character on the Banach algebra $A$ (i.e.
$\varphi$ is a non-zero multiplicative linear functional on $A$). Then $\C$ is a Banach $A$-module
for the product defined by
$$ a\cdot z= z \cdot a= \varphi (a)z  \ \ \ (a\in A, z\in \C);$$
this one-dimensional module is denoted by $\C_\varphi$. A derivation from $A$ into $\C_\varphi$ is called a {\it point derivation at $\varphi$}; it is a linear functional $d$ on $A$ such that
$$d(ab)=d(a)\varphi(b)+\varphi(a)d(b) \ \ (a,b\in A).$$

It is well-known that if $A$ is weakly amenable, then there is no non-zero continuous point derivation on $A$ \cite[Theorem 2.8.63(ii)]{D}, i.e. $\mathcal{Z}^1(A, \C_\varphi)=\{0\}$ for every non-zero multiplicative linear functional $\varphi$ on $A$. However the converse is not true! For example, if $\mathfrak{S}$ is the group of rotation of $\mathbb{R}^3$, then, by \cite[Corollary 7.3]{J3}, the Fourier algebra $A(\mathfrak{S})$ is not weakly amenable but we know that it has no non-zero continuous point derivation  \cite[Proposition 1]{F}. Other examples include suitable
lipschitz algebra and Beurling algebras \cite{BCD}.

Our purpose in this section is to show that for Beurling algebras a weaker version of the converse is true. We start by considering the following well-known character on $M_\om(G)$, and the point derivations on it.

\begin{defn}
Let $\omega$ be a weight on $G$ such that $\om \geqslant 1$. Then the map
$\varphi^\om_0 \from M_\om(G) \to \C$ defined by $$\varphi^\om_0(\mu)=\mu(G) ,$$
is {\it the augmentation character on} $M_\om(G)$ and its kernel in $L^1_\om(G)$ is the {\it augmentation ideal of} $L^1_\om(G)$.
\end{defn}

We will show that not having continuous non-zero point derivations on the
augmentation character will determine derivation spaces for a large
cases of modules. The following lemma indicates the relationship
between the augmentation ideal and the kernel of $\phi^X_\om$.

\begin{lem}\label{L:augmentation-Kernel-module}
Let $\omega \geqslant 1$ and $\sg$ be weights on $G$, let $X$ be a Banach $L^1_\sg(G)$-module, and let $I_0$ be the augmentation ideal of $L^1_\om(G)$. Let $\iota\otimes id_X \from I_0 \widehat{\otimes}  X \to  L^1_{\om}(G) \widehat{\otimes} X$ be the norm-decreasing linear operator specified by
$$ \iota\otimes id_X(f\otimes x)=f\otimes x \ \ \ (f\in I_0, x\in X).$$
Then $\iota\otimes id_X$ is a bi-continuous algebraic isomorphism from $I_0 \widehat{\otimes}  X$ onto
$\ker \phi^X_\om$.
\end{lem}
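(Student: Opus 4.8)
The plan is to exploit the admissibility of the augmentation sequence. Since $\om \geqslant 1$, the augmentation character is norm-decreasing on $L^1_\om(G)$, because $\left| \int_G f(t)\,dt \right| \leqslant \int_G |f(t)|\om(t)\,dt = \Vert f \Vert$; thus $\varphi^\om_0$ is a bounded functional whose restriction to $L^1_\om(G)$ has kernel $I_0$. First I would fix $f_0 \in C_{00}(G) \subseteq L^1_\om(G)$ with $\varphi^\om_0(f_0)=1$, and set $S(z)=zf_0$ and $P(f)=f-\varphi^\om_0(f)f_0$. Then $P\from L^1_\om(G)\to I_0$ is a bounded projection, $S\from \C \to L^1_\om(G)$ a bounded section, and $\iota P + S\varphi^\om_0 = \mathrm{id}$ on $L^1_\om(G)$, so that $L^1_\om(G)=I_0\oplus \C f_0$ is a topological direct sum.

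Next I would tensor this splitting with $\mathrm{id}_X$. By functoriality of the projective tensor product, the operators $P\otimes \mathrm{id}_X$ and $S\otimes \mathrm{id}_X$ are bounded, and identifying $\C\widehat{\otimes}X$ with $X$ one has $\varphi^\om_0\otimes \mathrm{id}_X = \phi^X_\om$, since $\phi^X_\om(f\otimes x)=\varphi^\om_0(f)x$. Applying $-\widehat{\otimes}X$ to the relation $\iota P + S\varphi^\om_0=\mathrm{id}$ then yields $(\iota\otimes \mathrm{id}_X)(P\otimes \mathrm{id}_X)+(S\otimes \mathrm{id}_X)\phi^X_\om = \mathrm{id}$ on $L^1_\om(G)\widehat{\otimes}X$. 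From $P\iota=\mathrm{id}_{I_0}$ I obtain $(P\otimes \mathrm{id}_X)(\iota\otimes \mathrm{id}_X)=\mathrm{id}_{I_0\widehat{\otimes}X}$, which shows that $\iota\otimes \mathrm{id}_X$ is bounded below; hence it is injective with closed range and is a bi-continuous isomorphism onto that range.

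It then remains to identify the range with $\ker\phi^X_\om$. The inclusion $\im(\iota\otimes \mathrm{id}_X)\subseteq \ker\phi^X_\om$ is immediate, as $\phi^X_\om(f\otimes x)=\varphi^\om_0(f)x=0$ for $f\in I_0$. For the converse I would feed any $w\in\ker\phi^X_\om$ into the identity of the previous paragraph: since $\phi^X_\om(w)=0$, the second summand vanishes and $w=(\iota\otimes \mathrm{id}_X)(P\otimes \mathrm{id}_X)(w)\in \im(\iota\otimes \mathrm{id}_X)$. Thus $\iota\otimes \mathrm{id}_X$ is a bi-continuous linear bijection of $I_0\widehat{\otimes}X$ onto $\ker\phi^X_\om$.

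Finally, for the word \emph{algebraic} I would note that both $I_0\widehat{\otimes}X$ and $\ker\phi^X_\om$ are submodules of $L^1_\om(G)\widehat{\otimes}X$ over $L^1_\om(G)\widehat{\otimes}L^1_\sg(G)$ — the former because $I_0$ is an ideal of $L^1_\om(G)$, the latter by the multiplicativity of $\phi^X_\om$ recorded in the proof of Proposition \ref{P: pi_phi}(iii) — and that $\iota\otimes \mathrm{id}_X$, being the natural inclusion $f\otimes x\mapsto f\otimes x$, intertwines these actions; combined with the previous paragraph this makes it a bi-continuous algebraic isomorphism. The only genuinely delicate point is the admissibility of the augmentation sequence, that is, the boundedness of $\varphi^\om_0$, which is exactly where the hypothesis $\om\geqslant 1$ enters and which guarantees that $-\widehat{\otimes}X$ transports the splitting faithfully; everything else is a formal consequence of the functoriality of the projective tensor product.
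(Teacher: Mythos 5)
Your proposal is correct and is essentially the paper's own argument: the paper fixes the same data (a function $g\in C_{00}(G)$ with $\int_G g(t)\,dt=1$, i.e.\ your $f_0$, and the projection $P(f)=f-\varphi^\om_0(f)g$ onto $I_0$) and proves surjectivity by showing that any $u=\sum_{n=1}^\infty f_n\otimes x_n\in\ker\phi^X_\om$ equals $\sum_{n=1}^\infty P(f_n)\otimes x_n$, which is exactly your identity $w=(\iota\otimes\mathrm{id}_X)(P\otimes\mathrm{id}_X)(w)$ written out on a representative series. The only cosmetic difference is that you obtain bi-continuity directly from the bounded left inverse $P\otimes\mathrm{id}_X$ (so $\iota\otimes\mathrm{id}_X$ is bounded below), whereas the paper deduces it from the open mapping theorem.
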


\begin{proof}
It is clear that $\iota\otimes id_X( I_0 \widehat{\otimes}  X) \subseteq \ker \phi ^X_{\om}$. On the other hand,
let $u=\dss\sum_{n=1}^\infty  f_n \otimes x_n \in \ker \phi ^X_{\om}$. Fix $g\in C_{00}(G)$ with $\int_G g(t)dt=1$, and consider the continuious projection $P \from L^1_\om(G) \to I_0$ defined by
$$ P(f)=f-g \int_G f dt \ \ (f\in L^1_\om(G)).$$
Since $\phi ^X_{\om}(u)=0$, we have
\begin{eqnarray*}
u
&=& \dss\sum_{n=1}^\infty  P(f_n) \otimes x_n + \dss\sum_{n=1}^\infty  (g\int_G f_n(t)dt \otimes x_n)\\
&=&  \dss\sum_{n=1}^\infty  P(f_n) \otimes x_n+ \dss\sum_{n=1}^\infty  (g\otimes [\int_G f_n(t)dt] x_n)\\
&=&  \dss\sum_{n=1}^\infty  P(f_n) \otimes x_n+ g  \otimes \dss\sum_{n=1}^\infty [\int_G f_n(t)dt] x_n\\
&=&  \dss\sum_{n=1}^\infty   P(f_n) \otimes x_n+ g  \otimes  \phi ^X_{\om}(u) \\
&=& \dss\sum_{n=1}^\infty    P(f_n) \otimes x_n.
\end{eqnarray*}
Hence $u\in \iota\otimes id_X(I_0 \widehat{\otimes}  X)$. Thus $\iota\otimes id_X( I_0 \widehat{\otimes}  X)=\ker \phi ^X_{\om}$. Moreover, $P\otimes id_X$ is the right inverse of $\iota\otimes id_X$, i.e. $\iota\otimes id_X$ is one-to-one. The bi-continity follows from the open mapping theorem.
\end{proof}

\begin{thm}\label{T: Kernel-module}
Let $\omega$ and $\sg$ be weights on $G$ such that  $\om \geqslant 1$, $\om \geqslant \sg$, $\om\os \geqslant 1$, and let $X$ be an essential Banach $L^1_\sg(G)$-module.  Suppose that $\mathcal{Z}^1(L^1_{\om\os}(G), \C_{\varphi^{\om\os}_0})=\{0\}$. Then:\\
{\rm (i)}  $\ker \Phi^X_{\om\os}$ is an essential Banach $\ker \Phi^{L^1_\sg(G)}_{\om\os}$-module;\\
{\rm (ii)}   $\ker \Pi^X_\om$ is an essential Banach $\ker \Pi^{L^1_\sg(G)}_\om$-module.

\end{thm}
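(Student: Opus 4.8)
The plan is to establish (i) by transporting the situation to projective tensor products, where the point-derivation hypothesis becomes a statement about the augmentation ideal, and then to deduce (ii) from (i) by pushing the resulting essential decomposition through the translation operator $\Lambda_X$. First I would record the module structures. By Proposition \ref{P: pi_phi}(iii), applied with weight $\om\os$, the operator $\Phi^{L^1_\sg(G)}_{\om\os}$ is multiplicative and $\Phi^X_{\om\os}(\f*\g)=\Phi^{L^1_\sg(G)}_{\om\os}(\f)\,\Phi^X_{\om\os}(\g)$, so $\ker\Phi^{L^1_\sg(G)}_{\om\os}$ is a closed subalgebra of $L^1_{\om\os}(G,L^1_\sg(G))$ and $\ker\Phi^X_{\om\os}$ is a closed submodule of $L^1_{\om\os}(G,X)$ over it. I would then transport this along the two available isomorphisms: by Theorem \ref{T: Alpha-algebraic} the map $\alpha_X$ intertwines the convolution action with the tensor action and, by Corollary \ref{C: classification of kernel} (with weight $\om\os$), carries $\ker\phi^X_{\om\os}$ isometrically onto $\ker\Phi^X_{\om\os}$, and similarly with $L^1_\sg(G)$ in place of $X$; moreover, writing $I_0$ for the augmentation ideal of $L^1_{\om\os}(G)$, Lemma \ref{L:augmentation-Kernel-module} (with weight $\om\os$) identifies $\ker\phi^X_{\om\os}$ with $I_0\widehat{\otimes}X$ and $\ker\phi^{L^1_\sg(G)}_{\om\os}$ with $I_0\widehat{\otimes}L^1_\sg(G)$ via the inclusion $\iota\otimes id$, which plainly intertwines the tensor actions $(f\otimes a)(g\otimes x)=f*g\otimes ax$. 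As $\alpha$ and $\iota\otimes id$ are bicontinuous module isomorphisms, essentiality is preserved along the chain, so it suffices to prove that $I_0\widehat{\otimes}X$ is essential over $I_0\widehat{\otimes}L^1_\sg(G)$.

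The essential submodule of $I_0\widehat{\otimes}X$ is the closed linear span of the module products $(f*g)\otimes(ax)$ with $f,g\in I_0$, $a\in L^1_\sg(G)$, $x\in X$. Here I would use the standard correspondence between continuous point derivations at a character $\varphi$ and continuous functionals on $M/\overline{M^2}$, where $M=\ker\varphi$: the hypothesis $\mathcal{Z}^1(L^1_{\om\os}(G),\C_{\varphi^{\om\os}_0})=\{0\}$ says precisely that the augmentation ideal is essential, i.e.\ that $\overline{\spn}\{f*g\mid f,g\in I_0\}=I_0$. Since $X$ is an essential $L^1_\sg(G)$-module, $\overline{\spn}\{ax\mid a\in L^1_\sg(G),\ x\in X\}=X$. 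Invoking the fact that the algebraic tensor product of two dense subspaces is dense in the projective tensor product, I would conclude that the products $(f*g)\otimes(ax)$ have dense linear span in $I_0\widehat{\otimes}X$; this gives the desired essentiality and hence (i).

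For (ii), I would push the decomposition from (i) through $\Lambda_X$. By Theorem \ref{T: Transform TH}(iv), $\Lambda_X$ is a module map over $\Lambda_{L^1_\sg(G)}$, namely $\Lambda_X(\f*\g)=\Lambda_{L^1_\sg(G)}(\f)*\Lambda_X(\g)$; it is continuous, and by Theorem \ref{T: Kernel-Agumentation} its image $\Lambda_X(\ker\Phi^X_{\om\os})$ is dense in $\ker\Pi^X_\om$, while $\Lambda_{L^1_\sg(G)}(\ker\Phi^{L^1_\sg(G)}_{\om\os})$ is dense in $\ker\Pi^{L^1_\sg(G)}_\om$. Starting from the equality $\ker\Phi^X_{\om\os}=\overline{\spn}\{\f*\g\}$ established in (i), applying the continuous map $\Lambda_X$ and using the intertwining relation yields $\ker\Pi^X_\om=\overline{\Lambda_X(\ker\Phi^X_{\om\os})}=\overline{\spn}\{\Lambda_{L^1_\sg(G)}(\f)*\Lambda_X(\g)\}$. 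Each factor $\Lambda_{L^1_\sg(G)}(\f)$ lies in $\ker\Pi^{L^1_\sg(G)}_\om$ and each $\Lambda_X(\g)$ lies in $\ker\Pi^X_\om$, so every term is a module product, and $\ker\Pi^X_\om$ is a submodule over $\ker\Pi^{L^1_\sg(G)}_\om$ by Proposition \ref{P: pi_phi}(iii); hence the closed span of the module products is all of $\ker\Pi^X_\om$, which is exactly (ii).

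I expect the delicate point to be part (i): carefully matching the three module structures (convolution on $L^1_{\om\os}(G,\cdot)$, the tensor action, and the action on $I_0\widehat{\otimes}X$) and checking that $\alpha$ and $\iota\otimes id$ intertwine them so that essentiality genuinely transfers, together with the translation of the analytic hypothesis into the density of $\{f*g\}$ in $I_0$. Once (i) is in hand, part (ii) is a routine continuity-and-density argument through $\Lambda_X$.
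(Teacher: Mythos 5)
Your proposal is correct and takes essentially the same route as the paper: for (i) you convert the point-derivation hypothesis into density of $I_0^2$ in $I_0$, combine it with the essentiality of $X$ to get density of the module products in $I_0\widehat{\otimes}X$, and transport through $\iota\otimes id_X$ and $\alpha_X$ via Lemma \ref{L:augmentation-Kernel-module}, Theorem \ref{T: Alpha-algebraic}(i) and Corollary \ref{C: classification of kernel}, and for (ii) you push the resulting decomposition through $\Lambda_X$ using Theorem \ref{T: Transform TH}(iv) and Theorem \ref{T: Kernel-Agumentation}, which is precisely the paper's argument. The only difference is cosmetic: you make explicit the density step (dense subspaces have dense algebraic tensor product in the projective tensor product) that the paper compresses into a single line.
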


\begin{proof}
It follows from Proposition \ref{P: pi_phi}(iii) that $\ker \Phi^X_{\om\os}$ is a Banach $\ker \Phi^{L^1_\sg(G)}_{\om\os}$-module. Thus it remains to show the essentiality. 

Let $I_0$ be the augmentation ideal of $L^1_{\om\os}(G)$.
Since $L^1_{\om\os}(G)$ has no non-zero continuous point derivations at $\varphi^{\om\os}_0$, $I_0^2:=I_0I_0$ is dense in $I_0$ \cite[Proposition 1.8.8]{D}. Hence, from the essentiality of $X$,
$$I_0\widehat{\otimes} X=[(I_0 \widehat{\otimes} L^1_{\sg}(G)) ( I_0 \widehat{\otimes}  X )]^-.  $$
Therefore, by Lemma \ref{L:augmentation-Kernel-module}, if we apply
$\iota\otimes id_X$ to the both sides of the above equality,
we have
$$ \ker \phi ^X_{\om\os}=[\ker \phi ^{L^1_{\sg}(G)}_{\om\os} \ker \phi ^X_{\om\os}]^-. $$
The final result follows from applying $\alpha_X$ to the both sides of the preceding equality and using
Theorem \ref{T: Alpha-algebraic}(i) and Corollary \ref{C: classification of kernel}(ii).\\
(ii) It follows from part (i), Theorem \ref{T: Transform TH}(iv), and Theorem \ref{T: Kernel-Agumentation}.
\end{proof}

Let $A$ be a Banach algebra, and $X$ be a Banach $A$-bimodule. There is a
Banach $A$-bimodule actions on $A \widehat{\otimes} X$ specified by
$$ a \cdot(b \otimes x)=ab \otimes x \ \ , \ \ (b \otimes x)\cdot a=b \otimes
xa \ \ (a,b \in A, x\in X).$$

\begin{cor}\label{C: Characterization-kernel}
Let $\omega$ and $\sg$ be weights on $G$ such that  $\om \geqslant 1$, $\om \geqslant \sg$, $\om\os \geqslant 1$, and let $X$ be an essential Banach $L^1_\sg(G)$-module. Suppose that $\mathcal{Z}^1(L^1_{\om\os}(G), \C_{\varphi^{\om\os}_0})=\{0\}$. Then
$$\ker \pi^X_\om=\overline{\spn} \{ f\cdot u-u\cdot f \mid f \in L^1_\om(G), u\in \ker \pi^X_\om \}.$$
\end{cor}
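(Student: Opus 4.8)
The plan is to prove the two inclusions separately. The inclusion $\supseteq$ is routine: using that $G$ is abelian and $X$ symmetric, one checks directly that $\pi^X_\om$ is an $L^1_\om(G)$-bimodule map into the symmetric module $X$, so that
$$\pi^X_\om(f\cdot u-u\cdot f)=f\cdot\pi^X_\om(u)-\pi^X_\om(u)\cdot f=0\qquad(f\in L^1_\om(G),\ u\in L^1_\om(G)\widehat{\otimes}X).$$
Hence every commutator lies in $\ker\pi^X_\om$, and since this kernel is closed so does their closed linear span. Write $N:=\ker\pi^X_\om$ and let $C$ denote the right-hand side of the asserted identity, so that $C\subseteq N$.

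For the harder inclusion $N\subseteq C$, I would first transport Theorem \ref{T: Kernel-module}(ii) from the vector-valued picture to the tensor-product picture. Applying $\alpha_X^{-1}$ and $\alpha_{L^1_\sg(G)}^{-1}$, using Theorem \ref{T: Alpha-algebraic}(i) to intertwine the two module actions and Corollary \ref{C: classification of kernel}(i) to match the kernels, the essentiality statement turns into
$$N=\bigl[\ker\pi^{L^1_\sg(G)}_\om\cdot N\bigr]^-,$$
where $\cdot$ is the action of $L^1_\om(G)\widehat{\otimes}L^1_\sg(G)$ on $L^1_\om(G)\widehat{\otimes}X$. Thus it suffices to show that $\ker\pi^{L^1_\sg(G)}_\om\cdot N\subseteq C$.

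To this end I would invoke the standard description of the kernel of a module multiplication map: since $L^1_\om(G)$ has a bounded approximate identity and $L^1_\sg(G)$ is an essential $L^1_\om(G)$-module (both facts already in force in the setting of Theorem \ref{T: Kernel-module}), the usual bounded-approximate-identity argument (as in \cite{S}, \cite{D}) shows that $\ker\pi^{L^1_\sg(G)}_\om$ is the closed linear span of the elements $b*c\otimes a-b\otimes c*a$ with $b,c\in L^1_\om(G)$ and $a\in L^1_\sg(G)$. As the action is continuous and $C$ is closed, it then suffices to evaluate such a generator on an arbitrary $u=\sum_n g_n\otimes x_n\in N$. A short computation, using commutativity of $G$ and symmetry of $X$ to move convolution factors past the module action, collapses the product to a single commutator:
$$(b*c\otimes a-b\otimes c*a)\cdot u=c\cdot v-v\cdot c,\qquad v:=\sum_n b*g_n\otimes ax_n.$$

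The crux is to verify that $v\in N$, so that $c\cdot v-v\cdot c$ is genuinely of the required form. Indeed $\pi^X_\om(v)=\sum_n(b*g_n)(ax_n)=(b*a)\cdot\sum_n g_nx_n=(b*a)\cdot\pi^X_\om(u)=0$, where the middle equality again uses commutativity together with the module axioms and continuity of the action of $b*a$ on $X$. Granting this, every generator of $\ker\pi^{L^1_\sg(G)}_\om$ carries $N$ into $C$, whence $\ker\pi^{L^1_\sg(G)}_\om\cdot N\subseteq C$, and the displayed essentiality identity gives $N\subseteq C$. Combined with $C\subseteq N$ this yields equality. I expect the main difficulty to be bookkeeping rather than conceptual: one must check that all the rearrangements (such as $b*c*g_n=c*(b*g_n)$ and $(c*a)x_n=c(ax_n)$) are legitimate given $\om\geq\sg$ and the extension of the module action to $M_\sg(G)$, and that the passage from the generators to their closed span is compatible with the continuity used to land inside the closed space $C$.
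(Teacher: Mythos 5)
Your proposal is correct, and its backbone coincides with the paper's: both start from Theorem \ref{T: Kernel-module}(ii), transported through $\alpha_X$ via Theorem \ref{T: Alpha-algebraic}(i), Corollary \ref{C: classification of kernel}(i) and Proposition \ref{P: pi_phi}(i), to get $\ker\pi^X_\om=[\ker\pi^{L^1_\sg(G)}_\om\,\ker\pi^X_\om]^-$, and both must then convert this product set into commutators. You diverge in how that conversion is done. The paper argues directly with a bounded approximate identity: for $v=\sum_n f_n\otimes g_n\in\ker\pi^{L^1_\sg(G)}_\om$ and $m\in\ker\pi^X_\om$ it expands $(e_i\otimes e_i)v=\sum_n(e_i\otimes g_n)(f_n\otimes e_i-e_i\otimes f_n)$, multiplies by $m$, and lets $i\to\infty$, so the commutators $f_n\cdot[m\cdot g_n]-[m\cdot g_n]\cdot f_n$ appear only in the limit (with a tacit uniformity over the infinite sum). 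You instead first invoke the standard BAI description $\ker\pi^{L^1_\sg(G)}_\om=\overline{\spn}\{b*c\otimes a-b\otimes c*a\}$ --- precisely the fact the paper proves later as equation (2) in the proof of Theorem \ref{T:Beurling alg-der}, here applied to $X=L^1_\sg(G)$, which is legitimate since $\om\geq\sg$ gives the contractive inclusion $L^1_\om(G)\subseteq L^1_\sg(G)$ and makes $L^1_\sg(G)$ an essential $L^1_\om(G)$-module --- and then observe the \emph{exact} identity $(b*c\otimes a-b\otimes c*a)\cdot u=c\cdot v-v\cdot c$ with $v=(b\otimes a)\cdot u$; note that writing $v$ this way shows it is independent of the representation of $u$, and $v\in\ker\pi^X_\om$ is immediate from the multiplicativity $\pi^X_\om((b\otimes a)u)=(b*a)\,\pi^X_\om(u)$ recorded in the proof of Proposition \ref{P: pi_phi}(iii), in agreement with your hand computation. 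Your route buys exactness: no double limit and no interchange of limit with the infinite sum, since continuity is needed only to pass from generators to their closed span; its cost is the extra generation lemma, which the paper effectively proves anyway. Your treatment of the easy inclusion (commutators lie in $\ker\pi^X_\om$ because $\pi^X_\om$ is a bimodule map into a symmetric module, using $g(fx)=(f*g)x=f(gx)$) is also more direct than the paper's, which instead writes $f\cdot u-u\cdot f=\lim_i(f\otimes e_i-e_i\otimes f)u$ to place commutators inside the product set. All the rearrangements you flag --- $b*c*g_n=c*(b*g_n)$, $(c*a)x_n=c(ax_n)$, $(b*g_n)(ax_n)=(b*a)(g_nx_n)$ --- are exactly the module axioms plus commutativity of convolution, so the bookkeeping does check out.
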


\begin{proof}
From Theorem \ref{T: Kernel-module}(ii),
Theoem \ref{T: Alpha-algebraic}(i), and Proposition \ref{P: pi_phi}(i), we have
$$ \ker \pi ^X_\om=[\ker \pi ^{L^1_\sg(G)}_\om \ker \pi ^X_\om]^-. $$
Hence it suffices to show that
$$ [\ker \pi ^{L^1_\sg(G)}_\om \ker \pi ^X_\om]^- =\overline{\spn} \{ f\cdot u-u\cdot f \mid f \in L^1_\om(G),
u\in \ker \pi^X_\om \}. \eqno{(1)}$$
Let $\{ e_i \}_{i\in I}$ be a bounded approximate identity in $L^1_\om(G)$ for $L^1_\om(G)$, $L^1_\sg(G)$, and $X$. For every $f \in L^1_\om(G)$ and $u\in \ker \pi^X_\om$,
$$ f\cdot u-u\cdot f=\dss\lim_{i\to \infty} (f\otimes e_i-e_i\otimes f)u \in [\ker \pi ^{L^1_\sg(G)}_\om \ker \pi ^X_\om]^-.$$
Hence ``$\supseteq$" follows in (1).

Conversly, let  $m\in \ker \pi ^X_\om$ and $v=\dss\sum_{n=1}^\infty  f_n \otimes g_n \in \ker \pi ^{L^1_\sg(G)}_\om$. We have
\begin{eqnarray*}
(e_i \otimes e_i) v
&=& \dss\sum_{n=1}^\infty  e_i f_n \otimes e_i g_n  \\
&=&  \dss\sum_{n=1}^\infty [(e_i \otimes g_n) (f_n \otimes e_i-e_i \otimes f_n +  e_i \otimes f_n )]\\
&=&  \dss\sum_{n=1}^\infty [(e_i \otimes g_n) (f_n \otimes e_i-e_i \otimes f_n)] +  e_i^2 \otimes \dss\sum_{n=1}^\infty  f_n g_n\\
&=&   \dss\sum_{n=1}^\infty [(e_i \otimes g_n) (f_n \otimes e_i-e_i \otimes f_n)] + e_i \otimes \pi ^{L^1_\sg(G)}_\om(v) \\
&=&   \dss\sum_{n=1}^\infty [(e_i \otimes g_n) (f_n \otimes e_i-e_i \otimes f_n)].
\end{eqnarray*}
However, it is straightforward to check that, for each $n\in \N$, $m\cdot g_n \in \ker \pi ^X_\om$ and
$$  [(e_i \otimes g_n) (f_n \otimes e_i-e_i \otimes f_n)]m \rightarrow  f_n\cdot [m\cdot g_n]-[m\cdot g_n] \cdot f_n $$
as $i \rightarrow \infty$. Hence
$$  [(e_i \otimes g_n) (f_n \otimes e_i-e_i \otimes f_n)]m \in \overline{\spn} \{ f\cdot u-u\cdot f \mid f \in L^1_\om(G),
u\in \ker \pi^X_\om \}.$$
The final result follows since $(e_i \otimes e_i) vm \rightarrow vm$ as $i \rightarrow \infty$.
\end{proof}

\begin{thm}\label{T:Beurling alg-der}
Let $\omega$ and $\sg$ be weights on $G$ such that  $\om \geqslant 1$, $\om \geqslant \sg$, $\om\os \geqslant 1$. Suppose that $\mathcal{Z}^1(L^1_{\om\os}(G), \C_{\varphi^{\om\os}_0})=\{0\}$. Then, for any Banach $L^1_\sg(G)$-module $\X$, every continuous derivation from $L^1_\om(G)$ into $\X$ is zero.
\end{thm}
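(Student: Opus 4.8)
The plan is to deduce the vanishing of $D$ from the description of $\ker\pi^X_\om$ as a closed span of commutators furnished by Corollary \ref{C: Characterization-kernel}. Since $\om\geqslant\sg$, the inclusion $L^1_\om(G)\hookrightarrow L^1_\sg(G)$ is contractive, so every Banach $L^1_\sg(G)$-module $\X$ is also a Banach $L^1_\om(G)$-module and $D\from L^1_\om(G)\to\X$ makes sense. Write $A=L^1_\om(G)$ and recall that $A$ has a bounded approximate identity. I would first treat the case where $\X=Y^*$ is the dual of an \emph{essential} Banach $L^1_\sg(G)$-module $Y$, since this is exactly the situation to which Corollary \ref{C: Characterization-kernel} applies, and then reduce the general statement to it.

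For the core case, suppose $\X=Y^*$ with $Y$ essential, so that, by the present hypotheses and Corollary \ref{C: Characterization-kernel}, $\ker\pi^Y_\om=\overline{\spn}\{f\cdot u-u\cdot f\mid f\in A,\ u\in\ker\pi^Y_\om\}$. Regard $D$ as a bounded functional $\widehat D$ on $A\widehat{\otimes}Y$ via $\widehat D(a\otimes y)=\la D(a),y\ra$, using $(A\widehat{\otimes}Y)^*=\mathcal B(A,Y^*)$. A short computation, using the derivation identity and the symmetry of $Y^*$, shows that $\widehat D$ kills every commutator built from $\ker\pi^Y_\om$: for $u=\sum_n g_n\otimes y_n\in\ker\pi^Y_\om$ and $f\in A$,
$$\widehat D(f\cdot u-u\cdot f)=\sum_n\big(\la D(fg_n),y_n\ra-\la D(g_n),f\cdot y_n\ra\big)=\la D(f),\pi^Y_\om(u)\ra=0.$$
Hence $\widehat D$ vanishes on all of $\ker\pi^Y_\om$. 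Since $Y$ is essential, $\pi^Y_\om$ is surjective, so the annihilator of $\ker\pi^Y_\om$ is the range of $(\pi^Y_\om)^*$; thus $\widehat D=(\pi^Y_\om)^*(\xi)$ for some $\xi\in Y^*$, which unwinds to $D(a)=a\cdot\xi$ for every $a\in A$. Substituting this back into the derivation identity and invoking symmetry gives $ab\cdot\xi=2\,ab\cdot\xi$, so $ab\cdot\xi=0$ for all $a,b\in A$; approximating $a$ by $e_ia$ with a bounded approximate identity $\{e_i\}$ then yields $a\cdot\xi=0$, i.e. $D=0$.

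For an arbitrary Banach $L^1_\sg(G)$-module $\X$, compose $D$ with the canonical (injective) embedding $\X\hookrightarrow\X^{**}=(\X^*)^*$ to obtain a derivation into a dual module; it suffices to annihilate this composite. One is thereby reduced to derivations into $Y^*$ for $Y=\X^*$, a module that need not be essential. To remove the essentiality assumption, pass to the essential part $Y_e=\overline{L^1_\sg(G)\,Y}$: the restriction map $r\from Y^*\to Y_e^*$ is an $A$-module morphism, by Cohen's factorization theorem the image of any derivation into $Y^*$ lies in $A\cdot Y^*$, and $r$ is injective there because $A$ annihilates $Y_e^\perp$. Applying the core case to the essential module $Y_e$ gives $r\circ D=0$, and hence $D=0$.

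The substantive input — that $\ker\pi^X_\om$ is the closed span of commutators — is already packaged in Corollary \ref{C: Characterization-kernel}, so the remaining work is largely organizational. The commutator computation is routine once the derivation identity and the symmetry of the module are in hand. The delicate point, and the place where I expect the real care to be needed, is the two-step reduction of a general module to a dual module with essential predual: one must verify that passing to the bidual and then to the essential part of the predual loses no information about $D$ (the injectivity of $r$ on the image of $D$), and that $Y_e$ is again an essential $L^1_\sg(G)$-module so that Corollary \ref{C: Characterization-kernel} legitimately applies.
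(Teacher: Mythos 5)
Your proposal is correct, and its skeleton is the same as the paper's: treat first the case $\X=Y^*$ with $Y$ essential, transfer $D$ to a functional on $L^1_\om(G)\widehat{\otimes}Y$, and use Corollary \ref{C: Characterization-kernel} to make that functional vanish on $\ker\pi^Y_\om$; the general case is then reduced to this one. But you execute three steps differently, each defensibly. First, the paper only verifies that $\D$ kills the special commutators $f\cdot (gh\otimes x-g\otimes hx)-(gh\otimes x-g\otimes hx)\cdot f$, and therefore needs the additional identification $\ker \pi^X_\om=\overline{\spn} \{ gh\otimes x-g\otimes hx \mid g,h \in L^1_\om(G), x\in X\}$ (its equation (2), proved by a separate bounded-approximate-identity computation); your identity $\widehat D(f\cdot u-u\cdot f)=\la D(f),\pi^Y_\om(u)\ra$ holds for arbitrary $u$ and kills all commutators at once, so equation (2) becomes unnecessary. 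Second, where the paper finishes with the direct limit computation $\la D(g),x\ra=\lim_i\D(e_ig\otimes x-e_i\otimes gx)=0$, you use surjectivity of $\pi^Y_\om$ (Cohen factorization, valid since $Y$ is also essential over $L^1_\om(G)$ because $\sg\leqslant\om$) to get closed range, identify $(\ker\pi^Y_\om)^{\perp}=\mathrm{ran}\,(\pi^Y_\om)^*$, and conclude $D(a)=a\cdot\xi$, which then dies against the derivation identity in a symmetric module; this is a clean structural endgame at the modest cost of the closed-range duality. Third, for a general module $\X$ the paper merely says the argument of Theorem 2.8.63(iii) of Dales's book can be adapted; your two-step reduction (embed $\X$ into $(\X^*)^*$, then restrict to the essential part $Y_e$ of $Y=\X^*$) is precisely the content of that citation, and you correctly isolate the two facts that make it work: the image of $D$ lies in $L^1_\om(G)\cdot Y^*$ because every $a$ factors as $a=bc$ and $D(a)=b\cdot D(c)+D(b)\cdot c$, and the restriction map is injective there since $e_i\cdot\zeta=0$ for $\zeta\in Y_e^{\perp}$ while $e_i\cdot\zeta\to\zeta$ for $\zeta\in L^1_\om(G)\cdot Y^*$ (your one-line justification compresses exactly this). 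So the proposal is a complete proof; relative to the paper it trades the span identification (2) and the final limit computation for a single general commutator identity and a duality argument, and it supplies in full the general-module reduction that the paper delegates to a reference.
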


\begin{proof}
First consider the case $\X=X^*$, where $X$ is an essential Banach  $L^1_\sg(G)$-module.

Let $D \from L^1_\om(G) \To X^*$ be a bounded derivation. Define the
bounded operator $\tilde{D} \from L^1_\om(G) \widehat{\otimes}  X \to \C$ specified
by
$$\tilde{D}(f \otimes x)=\la D(f) \ , \ x \ra  \ \ \ \ (f\in L^1_\om(G), x\in X).$$
We first claim that
$$\tilde{D}=0 \ \ \ \text{on} \ \ \ \ker \pi^X_\om.  \eqno{(1)} $$
A straightforward calculation shows that, for all
$f,g,h\in L^1_\om(G)$ and $x\in X$,
$$\D [f\cdot (gh\otimes x-g\otimes hx)-(gh\otimes x-g\otimes hx)\cdot f]=0.  $$
On the other hand, from Corollary \ref{C: Characterization-kernel},
$$\ker \pi^X_\om=\overline{\spn} \{ f\cdot u-u\cdot f \mid f \in L^1_\om(G), u\in \ker \pi^X_\om \}.$$
Hence (1) follows if we show that
$$\ker \pi^X_\om=\overline{\spn} \{ gh\otimes x-g\otimes hx \mid g,h \in L^1_\om(G), x\in X\}.  \eqno{(2)}$$

Let $u=\dss\sum_{n=1}^\infty  f_n \otimes x_n \in \ker \pi ^X_\om$. Let $\{ e_i \}_{i\in I}$ be a bounded approximate identity in $L^1_\om(G)$ for both $L^1_\om(G)$ and $X$. Then
\begin{eqnarray*}
e_i\cdot u
&=& \dss\sum_{n=1}^\infty  (e_i f_n \otimes x_n-e_i \otimes f_n x_n +e_i \otimes f_n x_n) \\
&=&  \dss\sum_{n=1}^\infty (e_i f_n \otimes x_n-e_i \otimes f_n x_n) + e_i \otimes \dss\sum_{n=1}^\infty  f_n x_n\\
&=&  \dss\sum_{n=1}^\infty (e_i f_n \otimes x_n-e_i \otimes f_n x_n) + e_i \otimes \pi ^X_\om(u) \\
&=&  \dss\sum_{n=1}^\infty (e_i f_n \otimes x_n-e_i \otimes f_n x_n).
\end{eqnarray*}
Thus (2) follows since $e_i \cdot u \rightarrow u$ as $i \rightarrow \infty$. Hence, since $D$ is a derivation, for all $g\in L^1_\om(G)$ and $x\in X$,
\begin{eqnarray*}
\la D(g) \ , \ x \ra  &=&  \D(g  \otimes x) \\
&=& \dss\lim_{i\to\infty} \D(g \otimes xe_i)  \\
&=& \dss\lim_{i\to\infty} \la  e_iD(g) \ , \ x \ra \\
&=&  \dss\lim_{i\to\infty} [ \la D(e_ig) \ , \ x \ra - \la D(e_i)g \ , \ x \ra ] \\
&=& \dss\lim_{i\to\infty} \D(e_ig \otimes x- e_i \otimes gx)\\
&=&  0,
\end{eqnarray*}
where the final equality follows from (1). Hence $D=0$.

The general case follows by adapting an argument similar to the one made in the proof of \cite[Theorem 2.8.63(iii)]{D}.
\end{proof}

\section{weak amenability}\label{S:weak amen}

In this section, we present our main results on weak amenibility. We first recall that if $\omega$ is a weight on $G$ such that $\om \geqslant 1$, then the {\it strong operator topology} on
$M_\om(G)$ is defined as follows: a net $\{\mu_{\alpha}\}$ converges
to $\mu$ ($\mu_{\alpha} \overset{s.o.}{\To} \mu$) if and only if
$\mu_{\alpha}*f \to \mu*f$ in norm
for every $f\in L^1_\om(G)$. From \cite[Lemma 13.5]{DL}, both $L^1_\om(G)$ and $l^1_\om(G)$ are s.o. dense in $M_\om(G)$. 

The following proposition gives a sufficient condition on $\om$ so that $L^1_\om(G)$ has no non-zero continuous point derivation. This has been indicated in various references for particular cases but we have not seen it in its general form, so it seems worthwhile to provide a complete proof of it.

\begin{prop}\label{P:Point Der-Beurling alg}
Let $\omega$ be a weight on $G$ such that $\om \geqslant 1$. Then\\
$($i$)$ $\mathcal{Z}^1(L^1_\om(G),\C_{\varphi^\om_0})=\{0\}$ whenever $\mathcal{Z}^1(l^1_\om(G),\C_{\varphi^\om_0})=\{0\}$.\\
$($ii$)$ Suppose that, for every $t\in G$, $\inf \{ \omega (nt)/n \mid n\in \N \}=0$. Then $\mathcal{Z}^1(L^1_\om(G),\C_{\varphi^\om_0})=\{0\}$.
\end{prop}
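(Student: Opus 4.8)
The plan is to prove the two parts separately, using the strong-operator density of $l^1_\om(G)$ in $M_\om(G)$ for part (i), and an explicit growth estimate for part (ii).

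For part (i), I would argue that a continuous point derivation at $\varphi^\om_0$ is determined by its restriction to the point masses. Suppose $d\from L^1_\om(G)\to\C$ is a continuous point derivation at $\varphi^\om_0$. The key observation is that $\varphi^\om_0$ is the augmentation character, which extends to $M_\om(G)$, and a point derivation at a character satisfies $d(fg)=d(f)\varphi^\om_0(g)+\varphi^\om_0(f)d(g)$. I would first extend $d$ to a point derivation $\tilde d$ on $M_\om(G)$ at $\varphi^\om_0$ (using that $d$ is bounded and $L^1_\om(G)$ is an ideal in $M_\om(G)$, defining $\tilde d(\mu)=\lim_i d(\mu*e_i)$ along a bounded approximate identity, or more directly via the ideal structure). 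Then the restriction of $\tilde d$ to $l^1_\om(G)$ is a continuous point derivation on $l^1_\om(G)$ at $\varphi^\om_0$, which vanishes by hypothesis. Since $l^1_\om(G)$ is strong-operator dense in $M_\om(G)$ (by \cite[Lemma 13.5]{DL}) and $\tilde d$ is continuous for the relevant topology, $\tilde d=0$ on all of $M_\om(G)$, hence $d=0$. The delicate point here is ensuring the extension $\tilde d$ is continuous with respect to the strong-operator topology so that density transfers vanishing; I expect this to require care in verifying that the point-derivation identity plus boundedness forces the needed continuity.

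For part (ii), by part (i) it suffices to show $\mathcal{Z}^1(l^1_\om(G),\C_{\varphi^\om_0})=\{0\}$. A continuous point derivation $d$ on $l^1_\om(G)$ at $\varphi^\om_0$ is determined by its values $d(\delta_t)$ on point masses, since $l^1_\om(G)$ is generated by $\{\delta_t\}$. Writing $c(t):=d(\delta_t)$, the derivation identity $d(\delta_s*\delta_t)=d(\delta_{s+t})$ together with $\varphi^\om_0(\delta_t)=1$ yields the additive cocycle relation
$$ c(s+t)=c(s)+c(t) \qquad (s,t\in G), $$
so $c\from G\to\C$ is a group homomorphism. Continuity of $d$ gives the bound $|c(t)|=|d(\delta_t)|\leq \Vert d\Vert\,\Vert\delta_t\Vert=\Vert d\Vert\,\om(t)$ for all $t\in G$.

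The main step, and the crux of the argument, is to show that the growth bound $|c(t)|\leq\Vert d\Vert\,\om(t)$ combined with additivity and the hypothesis $\inf\{\om(nt)/n\mid n\in\N\}=0$ forces $c\equiv 0$. Fixing $t\in G$ and using additivity, $c(nt)=n\,c(t)$ for every $n\in\N$, so the bound gives
$$ n\,|c(t)|=|c(nt)|\leq \Vert d\Vert\,\om(nt), $$
whence $|c(t)|\leq \Vert d\Vert\,\om(nt)/n$ for all $n$. Taking the infimum over $n$ and invoking the hypothesis yields $|c(t)|\leq \Vert d\Vert\cdot\inf_n\om(nt)/n=0$, so $c(t)=0$. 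Since $t\in G$ was arbitrary, $c\equiv 0$, hence $d=0$ on the point masses and therefore on all of $l^1_\om(G)$ by continuity. I expect the only real obstacle to be the bookkeeping in part (i) — establishing the extension and the strong-operator continuity cleanly — while part (ii) should reduce to the clean homomorphism-plus-growth argument sketched above.
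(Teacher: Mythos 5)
Your proposal is correct and follows essentially the same route as the paper: part (i) by extending $d$ to $M_\om(G)$, restricting to $l^1_\om(G)$, and using strong-operator density, and part (ii) by the identity $d(\delta_{nt})=n\,d(\delta_t)$ combined with the bound $|d(\delta_t)|\leq \Vert d\Vert\,\om(nt)/n$. The strong-operator continuity of the extension, which you rightly flag as the delicate point, is exactly what the paper settles by citing \cite[Theorem 2.9.53]{D} together with \cite[Theorem 7.14]{DL}, so no new idea is needed there.
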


\begin{proof}
(i) Let $d \from L^1_\om(G) \to \C$ be a continuous point derivations at $\varphi^\om_0$. By
\cite[Theorem 2.9.53]{D} and \cite[Theorem 7.14]{DL}, there is a unique extension of $d$ to a continuous
derivation $\tilde{d} \from M_\om(G) \to \C$; the action of $M_\om(G)$ on $\C$ is defined by
$$ \mu \cdot z=z\cdot \mu=\mu(G)z \ \ \ (\mu\in M_\om(G), z\in \C).$$
Moreover, $\tilde{d}$ is continuous with respect to the strong operator topology on $M_\om(G)$.
Clearly the restriction of $\tilde{d}$ to $l^1_\om(G)$ belongs to $\mathcal{Z}^1(l^1_\om(G),\C_{\varphi^\om_0})$. Therefore $\tilde{d}=0$ on
$l^1_\om(G)$. However, $l^1_\om(G)$ is dense in $M_\om(G)$ with respect to the strong operator
topology. Hence $\tilde{d}=0$, and so $d=0$.

(ii) Take $d\in \mathcal{Z}^1(l^1_\om(G),\C_{\varphi^\om_0})$ and $t\in G$. For every $n\in \N$,
$d(\delta_{nt})=n[\delta_{(n-1)t} \cdot d(\delta_t)]=nd(\delta_t)$. Hence
$$ \Vert d(\delta_t) \Vert \leqslant  \Vert d \Vert \Vert \delta_{nt} \Vert / n =
 \Vert d \Vert \om(nt) /n.$$
Thus, from the hypothesis, $d(\delta_t)=0$, i.e. $d=0$.

The final results follows from part (i).
\end{proof}

We note that for any weight $\om$ on $G$, its {\it symmetrization} is the weight defined by
$\Omega(t):=\omega (t)\omega (-t)$ ($t\in G$). We can now use Theorem \ref{T:Beurling alg-der} to present a class of weakly amenable Beurling algebras. This has been already established, by N. Groenbaek, for Beurling algebras on  discrete abelian groups \cite{G1}.

\begin{thm}\label{T:weak amenable}
Let $\omega$ be a weight on $G$ such that, for every $t\in G$,
$\inf \{ \Omega (nt)/n \mid n\in \N \}=0$. Then
$L^1_\om(G)$ is weakly amenable.
\end{thm}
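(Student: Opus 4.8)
The plan is to obtain weak amenability as a direct consequence of Theorem~\ref{T:Beurling alg-der}, applied with the choice $\sg = \om$. With this choice $\os = \oo$, so the auxiliary weight controlling the point-derivation hypothesis is exactly the symmetrization: $\om\os = \om\oo = \Omega$. Since $\Omega(t) = \om(t)\om(-t) \geq \om(0) = 1$ by submultiplicativity, $\Omega$ is a weight with $\Omega \geq 1$, and the standing assumption $\inf\{\Omega(nt)/n \mid n \in \N\} = 0$ for all $t \in G$ is precisely the condition appearing in Proposition~\ref{P:Point Der-Beurling alg}(ii). That proposition therefore yields $\mathcal{Z}^1(L^1_\Omega(G), \C_{\varphi^\Omega_0}) = \{0\}$; that is, $L^1_{\om\os}(G) = L^1_\Omega(G)$ carries no non-zero continuous point derivation at its augmentation character, which is the main hypothesis of Theorem~\ref{T:Beurling alg-der}.

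Before invoking that theorem I first arrange $\om \geq 1$. Weak amenability is invariant under topological (in fact isometric) algebra isomorphism, and by \cite[Theorem 7.44]{DL} one may replace $\om$ by an equivalent weight bounded below by $1$: twisting $\om$ by a continuous homomorphism $\gamma \from G \to (0,\infty)$ produces an isometrically isomorphic Beurling algebra and, because $\gamma(t)\gamma(-t) = 1$, leaves the symmetrization $\Omega$ unchanged. Hence we may assume $\om \geq 1$ while preserving both the hypothesis (which only sees $\Omega$) and the conclusion to be proved. With $\sg = \om$ the three numerical requirements of Theorem~\ref{T:Beurling alg-der} now hold: $\om \geq 1$, $\om \geq \sg$, and $\om\os = \Omega \geq 1$.

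I then apply Theorem~\ref{T:Beurling alg-der} with $\sg = \om$ and $X = L^1_\om(G)^*$, the dual module of $L^1_\om(G)$, which is a Banach $L^1_\sg(G)$-module. The theorem forces every continuous derivation $D \from L^1_\om(G) \to L^1_\om(G)^*$ to vanish. Since $G$ is abelian, $L^1_\om(G)$ is commutative and its dual module is symmetric, so every inner derivation into $L^1_\om(G)^*$ is already zero; consequently the vanishing of all such derivations coincides with their being inner, which is precisely weak amenability of $L^1_\om(G)$.

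The only delicate point I anticipate is the reduction to $\om \geq 1$. Theorem~\ref{T:Beurling alg-der} is stated under $\om \geq 1$, whereas the present statement imposes no lower bound, and a general weight need not be comparable to $1$ (for instance a strictly exponential weight on $\Z$). The resolution is that such a weight is nonetheless isometrically isomorphic, via a character twist, to a Beurling algebra with weight $\geq 1$ and the same symmetrization; verifying that \cite[Theorem 7.44]{DL} supplies exactly this replacement, and that it genuinely preserves both the hypothesis and weak amenability, is the step that needs care. Everything else is a direct substitution into the machinery already assembled in Sections~\ref{S:Trans-op} and \ref{S:der space-aug ideal}.
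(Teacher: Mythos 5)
Your proposal is correct and follows essentially the same route as the paper's own proof: both reduce via \cite[Theorem 7.44]{DL} to an equivalent weight $\om_1 \geq 1$ whose symmetrization is still $\Omega$, invoke Proposition \ref{P:Point Der-Beurling alg}(ii) to kill point derivations on $L^1_\Omega(G)$ at $\varphi^\Omega_0$, and then apply Theorem \ref{T:Beurling alg-der} with $\sg = \om$. The only (harmless) difference is that you specialize the conclusion to $X = L^1_\om(G)^*$, while the paper records the stronger consequence that derivations into every symmetric Banach $L^1_{\om_1}(G)$-module vanish.
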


\begin{proof}
Let $\R^{+\bullet}:=(0, \infty)$ be the group of positive real numbers with respect to multiplication. By \cite[Theorem 7.44]{DL}, there is a continuous character (i.e. a non-zero group homomorphism) $\chi \from G \to \R^{+\bullet}$ such that $\om_1:=\om/\chi$ is a weight on $G$, $\om_1 \geqslant 1$, and $L^1_\om(G)$ is isometrically isomorphic to $L^1_{\om_1}(G)$. Therefore it suffices to show
that $L^1_{\om_1}(G)$ is weakly amenable. Since $\chi$ is a group homomorphism, for every $t\in G$,
$$\omega_1(t) \tilde{\omega_1}(t)=\omega (t)\oo(t)=\Omega(t).$$
On the other hand, by Proposition \ref{P:Point Der-Beurling alg}, there is no non-zero continuous point derivation on $L^1_\Omega(G)$ at $\varphi^\Omega_0$. Hence, from Theorem \ref{T:Beurling alg-der}, every continuous derivation from $L^1_{\om_1}(G)$ into any Banach
$L^1_{\om_1}(G)$-module is zero, i.e. $L^1_{\om_1}(G)$ is weakly amenable.
\end{proof}

We would like to point out that the condition in Proposition
\ref{P:Point Der-Beurling alg} is not necessary for vanishing of
$\mathcal{Z}^1(L^1_\om(G), \C_{\varphi^\om_0})$. Indeed, in Theorem
\ref{T:2-weak amenable-sharp weights}, we will present examples of
Beurling algebras with sharp growing weights which have no non-zero
continuous point derivations.

\section{2-weak amenability}\label{S:2-weak amen}

Let $\om\geq 1$ be a weight on $G$. Define the function $\om_1$ on $G$ by

\begin{eqnarray*}
\om_1(s) &=& \dss\limsup_{t\to\infty} \frac{\om(t+s)}{\om(t)}  \\
&=& \inf \left\{\sup \left \{\frac{\om(t+s)}{\om(t)} : t\notin K \right \} \mid K \ \text{is a compact subset of G} \right \}.
\end{eqnarray*}
It is clear that $\om_1$ is a sub-additive function on $G$ such that $\om_1 \leq \om$ and $\om\tilde{\om}_1 \geq 1$. However, we do not know whether $\om_1$ is continuous. Nevertheless
we have the following lemma:

\begin{lem}\label{L:om-sg euivalent}
Let $\om \geq 1$ be a weight on $G$, and let $\om_1$ be as above. Then:\\
{\rm (i)} $\om_1$ is measurable;\\
{\rm (ii)} there is a weight on $G$, denoted by $\sg_\om$, and positive real numbers $M$ and $N$ such that
$$ M\om_1(t) \leq \sg_\om(t) \leq N\om_1(t) \ \ (t\in G).$$
In particular, $\sg_\om$ is bounded if and only if $\om_1$ is bounded.
\end{lem}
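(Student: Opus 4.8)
The plan is to treat $\om_1$ as a \emph{measurable weight} --- a positive, sub-multiplicative function that happens not to be continuous --- and then to regularise it into an honest continuous weight without changing it more than by a bounded factor. I would begin by recording the elementary algebra of $\om_1$. Sub-multiplicativity $\om_1(s+t)\le\om_1(s)\om_1(t)$ follows by splitting $\frac{\om(r+s+t)}{\om(r)}=\frac{\om((r+t)+s)}{\om(r+t)}\cdot\frac{\om(r+t)}{\om(r)}$ and letting $r\to\infty$ (noting $r+t\to\infty$ as well), while $\om_1(0)=1$ is immediate. From $\om(r+s)\le\om(r)\om(s)$ and $\om(r)\le\om(r+s)\om(-s)$ one obtains the two-sided local bound $1/\oo(s)\le\om_1(s)\le\om(s)$; since $\om$ and $\oo$ are continuous this shows $\om_1$ is bounded above and below on compacta (hence locally integrable) and, being squeezed between two functions tending to $1$ at the identity, is continuous at $0$. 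An iteration argument using $\om\ge1$ also gives $\om_1\ge1$, which is convenient later.

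For (i), I would write
$$\om_1(s)=\inf_{K}g_K(s),\qquad g_K(s):=\sup_{t\notin K}\frac{\om(t+s)}{\om(t)},$$
the infimum being over compact $K\subseteq G$. For each fixed $t$ the map $s\mapsto\om(t+s)/\om(t)$ is continuous, so each $g_K$, being a supremum of continuous functions, is lower semicontinuous and in particular Borel. Replacing the infimum over all compacta by one over a countable cofinal sequence $K_n$ (an increasing exhaustion of $G$, which is where $\sg$-compactness of $G$ is used) exhibits $\om_1=\inf_n g_{K_n}$ as a countable infimum of Borel functions, hence measurable.

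For (ii), the idea is to smooth $\om_1$ by a sliding average. Fix a compact symmetric neighbourhood $U$ of $0$ with $\lambda(U)>0$, put $C_U:=\ess\sup_{s\in U}\om_1(s)<\infty$, and define
$$\tau(t):=\frac{1}{\lambda(U)}\int_U\om_1(t+s)\,d\lambda(s).$$
Three things must be checked. \emph{Continuity}: $\tau$ is the convolution of the locally integrable function $\om_1$ with the bounded, compactly supported kernel $\chi_U/\lambda(U)$, so continuity follows from the continuity of translation in $L^1$ on compacta. \emph{Equivalence}: for $s\in U$ the bounds $\om_1(t)/C_U\le\om_1(t+s)\le C_U\,\om_1(t)$ (the first from $\om_1(t)\le\om_1(t+s)\om_1(-s)$ and symmetry of $U$, the second from sub-multiplicativity) integrate to $C_U^{-1}\om_1(t)\le\tau(t)\le C_U\,\om_1(t)$. \emph{Sub-multiplicativity up to a constant}: from $\om_1(t_1+t_2+s)\le\om_1(t_1)\om_1(t_2+s)$ one gets $\tau(t_1+t_2)\le\om_1(t_1)\tau(t_2)\le C_U\,\tau(t_1)\tau(t_2)$. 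Passing to $C_U\tau$ turns the last inequality into an exact sub-multiplicative law, and I would then normalise at the identity to produce the required continuous weight $\sg_\om$ with $M\om_1\le\sg_\om\le N\om_1$; the final clause ($\sg_\om$ bounded iff $\om_1$ bounded) is immediate from this two-sided comparison.

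The main obstacle is precisely the tension in (ii) among the three demands on $\sg_\om$. Achieving continuity forces some averaging of the merely measurable $\om_1$, and any such smoothing degrades the sub-multiplicative identity $\sg_\om(s+t)\le\sg_\om(s)\sg_\om(t)$ to hold only up to a multiplicative constant; rescaling to restore the exact identity then clashes with the normalisation $\sg_\om(0)=1$, since multiplying a sub-multiplicative function by a constant $\ge1$ only raises its value at the identity. Reconciling these --- retaining the bounded comparison with $\om_1$ while producing a genuinely sub-multiplicative, continuous function equal to $1$ at $0$ --- is the delicate point, and it is here that the continuity of $\om_1$ at the identity (together with $\om_1\ge1$) is used to keep $C_U$ and $\tau(0)$ close to $1$ for small $U$.
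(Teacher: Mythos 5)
Your part (i) has a genuine gap relative to the statement. You reduce the infimum over all compacta to a countable infimum via an exhaustion $K_1\subseteq K_2\subseteq\cdots$, and you correctly flag that this uses $\sigma$-compactness of $G$ --- but the lemma assumes none: in this part of the paper $G$ is an arbitrary locally compact abelian group ($\sigma$-compactness/compact generation only enters in the last section). Without a countable cofinal family of compacta, your argument collapses, since an uncountable infimum of lower semicontinuous functions need not be Borel. The paper avoids countability altogether by proving something stronger: $\om_1$ is upper semicontinuous. Given $s$ with $\om_1(s)<r$, it picks a compact $K$ and $r_1<r$ with $\sup\{\om(t+s)/\om(t): t\notin K\}<r_1$, then a compact neighbourhood $U$ of $e$ with $\bigl(\sup_{s_1\in U}\om(s_1)\bigr)\,r_1<r$ (possible since $\om$ is continuous and $\om(e)=1$); submultiplicativity of $\om$ gives $\sup\{\om(t+s+s_1)/\om(t): t\notin K\}\le r_1\,\om(s_1)<r$ for all $s_1\in U$, so $s+U\subseteq\{\om_1<r\}$ and the sublevel sets are open. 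Your own setup repairs the same way: your functions $g_K$ satisfy $g_K(s+s_1)\le g_K(s)\,\om(s_1)$, which upgrades the (arbitrary) infimum to upper semicontinuity with no countability hypothesis.

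For (ii) your route is genuinely different from the paper's: the paper disposes of it in one line by citing Reiter--Stegeman (Definition 3.7.1 and Theorem 3.7.5: a measurable, locally bounded submultiplicative function is equivalent to a continuous one), whereas you re-prove that regularization theorem from scratch by the sliding average $\tau=\om_1*\chi_U/\lambda(U)$. That part of your argument is sound --- continuity of $\tau$, the two-sided bounds $C_U^{-1}\om_1\le\tau\le C_U\om_1$, and exact submultiplicativity of $C_U\tau$ all check out --- and it makes the lemma self-contained, which is a real merit. However, your final normalization step is not actually closed: keeping $C_U$ and $\tau(0)$ ``close to $1$'' for small $U$ does not yield $\sg_\om(0)=1$ exactly, and, as you yourself note, dividing by the value at the identity destroys submultiplicativity. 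You name the delicate point but do not resolve it. The honest fix is to observe that the normalization is never used: Reiter--Stegeman's weight functions are not normalized at the identity, and everything downstream (Lemmas 6.2--6.3, Theorem 6.4) uses only continuity, submultiplicativity, and the two-sided comparison $M\om_1\le\sg_\om\le N\om_1$; so one should simply drop the claim $\sg_\om(e)=1$ rather than suggest the averaging achieves it. A last minor point: your assertion $\om_1\ge1$ ``by iteration'' is true but not immediate --- when $t$ generates a relatively compact subgroup the iterates $t+kt$ need not leave compacta, and one must move the base point to infinity along the iteration; since you only invoke $\om_1\ge1$ in the (inessential) normalization discussion, this is a blemish rather than a defect.
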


\begin{proof}
(i) Let $r>0$, $G_r:=\{x\in G \mid \om_1(x)<r \}$ and $s\in G_r$. There is a compact subset $K$ of $G$ and $0<r_1<r$ such that
$$\sup \{\frac{\om(t+s)}{\om(t)} : t\notin K \}< r_1.  \eqno{(1)}$$
Let $U$ a compact neightborhood of identity in $G$ satisfying
$$\sup \{\om(s_1) : s_1\in U \}r_1<r. \eqno{(2)}$$
This is possible because $\om$ is continuous and $\om(e)=1$.
Now take $s_1\in U$. From (1) and the fact that $\om$ is sub-additive, we have
$$\sup \{\frac{\om(t+s+s_1)}{\om(t)} : t\notin K \}\leq \sup \{\frac{\om(t+s)}{\om(t)} : t\notin K \}
\om(s_1) < r_1\om(s_1).$$
Hence it follows from (2) that
$$\sup \{\frac{\om(t+s+s_1)}{\om(t)} : t\notin K, s_1\in U \}\leq \sup \{\om(s_1) : s_1\in U \}r_1<r.$$
This implies that $\om_1(s+s_1)<r$ for all $s_1\in U$ i.e. $s+U\subseteq G_r$. Thus $G_r$ is open. Hence $\om_1$ is measurable.

(ii) It follows from (i) and \cite[Definition 3.7.1 and Theorem 3.7.5]{RS}.
\end{proof}

The importance of $\sg_\om$ is presented in the following lemma in which we show that a certain Banach $L^1_\om(G)$-module can be regarded as an $L^1_{\sg_\om}(G)$-module. This interesting phonomenon helps us to connect the space of derivations into the second dual of $L^1_\om(G)$ to the behavior of the augmentation ideals of $L^1_{\om\sg_\om}(G)$. Indeed, we will see in Theorem \ref{T:2-weak amenable} that, for the case when $\sg_\om$ is bounded, this gives us a precise correspondence between continuous point derivations at $\varphi^{\om}_0$ and 2-weak amenability of $L^1_\om(G)$.

We recall from \cite[P. 77]{DL} that $L^\infty_{1/\om}(G)$, as the dual of $L^1_\om(G)$, is a Banach  $M_\om(G)$-module. In particular, for each $f\in L^\infty_{1/\om}(G)$, we have
$$f\cdot \delta_t=\delta_t \cdot f=\delta_{-t}* f \ \ \ (t\in G).$$
Moreover, $LUC_{1/\om}(G)=L^1_\om(G)L^\infty_{1/\om}(G)$.

\begin{lem}\label{L:om-sg module}
Let $\om \geq 1$ be a weight on $G$, and let $X_\om=LUC_{1/\om}(G)/C_{0,1/\om}(G)$. Then the standard action of $G$ on $X_\om$ extends continuously to an action of $M_{\sg_\om}(G)$ on $X_\om$. In particular, $X_\om$ is a unital Banach $M_{\sg_\om}(G)$-module and an essential Banach $L^1_{\sg_\om}(G)$-module.
\end{lem}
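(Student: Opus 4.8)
The plan is to obtain the $M_{\sg_\om}(G)$-action as the integrated form of the point-mass action $t \mapsto \delta_t$, the whole point being that, although $\|\delta_t\|_{M_\om(G)} = \om(t)$, the operator $\delta_t$ acts on the quotient $X_\om$ with norm controlled by $\om_1(t)$, hence by $\sg_\om(t)$. The first ingredient is an explicit formula for the quotient norm. For a representative $f \in LUC_{1/\om}(G)$, the isometry $h \mapsto h/\om$ of $L^\infty_{1/\om}(G)$ onto $L^\infty(G)$ identifies $\|[f]\|_{X_\om}$ with $\inf\{\|f/\om - \psi\|_\infty : \psi \in C_0(G)\}$, and a standard cut-off argument (multiply $f/\om$ by a function in $C_{00}(G)$ that is $1$ on a large compact set) shows this equals $\limsup_{s\to\infty}|f(s)|/\om(s)$, the limit superior being taken along the complements of compact sets.

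The heart of the argument is then a short computation with this formula. Since $\delta_t$ acts by the translation $(\delta_{-t}*f)(s) = f(s+t)$, the norm formula gives $\|\delta_t\cdot[f]\|_{X_\om} = \limsup_{s\to\infty} |f(s+t)|/\om(s)$. Factoring $|f(s+t)|/\om(s) = (|f(s+t)|/\om(s+t))(\om(s+t)/\om(s))$ and using the submultiplicativity of $\limsup$ for non-negative functions, the first factor has limit superior at most $\|[f]\|_{X_\om}$ while the second has limit superior exactly $\om_1(t)$ by definition; hence $\|\delta_t\|_{B(X_\om)} \le \om_1(t) \le M^{-1}\sg_\om(t)$ by Lemma \ref{L:om-sg euivalent}(ii). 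I expect this estimate to be the only genuine obstacle: once the quotient norm is recognised as a limsup at infinity, the bound by $\om_1$ is forced, and this is exactly the place where both the passage to the quotient and the weight $\sg_\om$ become relevant.

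With the estimate in hand, the extension is routine vector-valued integration. Because $LUC_{1/\om}(G) = L^1_\om(G)L^\infty_{1/\om}(G)$, Cohen's factorization theorem shows $X_\om$ is an essential Banach $L^1_\om(G)$-module, so by \cite[Theorem 7.14]{DL} the orbit map $t\mapsto \delta_t x$ is norm-continuous from $G$ into $X_\om$ for each $x$. Combined with $\|\delta_t x\| \le M^{-1}\sg_\om(t)\|x\|$, the Bochner integral $\mu\cdot x := \int_G \delta_t\cdot x\, d\mu(t)$ exists for every $\mu\in M_{\sg_\om}(G)$, with $\|\mu\cdot x\| \le M^{-1}\|\mu\|_{M_{\sg_\om}}\|x\|$. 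A Fubini argument, using the relation $\delta_s\delta_t = \delta_{s+t}$ and the continuity of each $\delta_s$ to pull it inside the integral, gives $\mu\cdot(\nu\cdot x) = (\mu*\nu)\cdot x$; thus $\mu\mapsto(x\mapsto\mu\cdot x)$ is a bounded homomorphism into $B(X_\om)$ extending the $G$-action, and since $\delta_e$ acts as the identity operator, $X_\om$ is a unital Banach $M_{\sg_\om}(G)$-module.

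It remains to check essentiality over $L^1_{\sg_\om}(G)$, which I would deduce directly from the continuity of the orbit map. Taking a normalised approximate identity $(e_i) \subseteq C_{00}(G)$ with $e_i \ge 0$, $\int_G e_i = 1$ and supports shrinking to $\{e\}$, the integrated action gives $e_i\cdot x = \int_G e_i(t)\,\delta_t\cdot x\, dt$, and $\|e_i\cdot x - x\| \le \int_G e_i(t)\|\delta_t x - x\|\, dt \to 0$ since $t\mapsto\delta_t x$ is continuous at $e$. Hence $X_\om = \overline{L^1_{\sg_\om}(G)X_\om}$, completing the proof.
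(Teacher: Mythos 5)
Your proposal is correct and follows essentially the same route as the paper: the paper's proof establishes exactly your key estimate $\Vert \delta_t\cdot \mathbf{x}\Vert \leq M^{-1}\sg_\om(t)\Vert \mathbf{x}\Vert$ by the same cut-off-at-infinity device (a compactly supported $f_t$ with $f_t(\delta_t\cdot g)\in C_{0,1/\om}(G)$ and the factorization $|g(s+t)|/\om(s)=\bigl(|g(s+t)|/\om(s+t)\bigr)\bigl(\om(s+t)/\om(s)\bigr)$), and then extends to $M_{\sg_\om}(G)$ by Bochner integration using the norm-continuity of $s\mapsto \delta_s\cdot\mathbf{x}$ and deduces essentiality just as you do. Your only cosmetic deviation is packaging the cut-off argument as the exact quotient-norm formula $\Vert [f]\Vert_{X_\om}=\limsup_{s\to\infty}|f(s)|/\om(s)$ before taking limsups, which is a slightly sharper statement of the same computation.
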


\begin{proof}
For simplicity, put $\sg={\sg_\om}$. 

By Lemma \ref{L:om-sg euivalent}, there is $M>0$ such that
$\om_1\leq M^{-1}\sg_\om$. We first show that for every $t\in G$ and $\textbf{x}\in X_\om$,
$$\left\| \delta_t\cdot \textbf{x} \right\|\leq M^{-1}\left\| \textbf{x} \right\|\sg(t).  \eqno{(1)} $$
Let $\epsilon > 0$. There is a compact set $F_t$ in $G$ such that, for every $s \notin F_t$,
$\om(s+t)/\om(s) \leq M^{-1}\sg(t)+\epsilon$. Pick a continuous function $f_t$ on $G$ with compact support such that
$$ 0 \leq f_t \leq 1 \ , \ f_t=1 \ \text{on} \ F_t. \eqno{(2)}  $$
Let $\textbf{x}=\tilde{g}$ where $g\in LUC_{1/\om}(G)$. Since $\delta_t\cdot g-(1-f_t)(\delta_t\cdot g)= f_t(\delta_t\cdot g) \in C_{0,1/\om}(G)$,
$$\left\|\delta_t\cdot \textbf{x} \right\| = \left\| \widetilde{(\delta_t\cdot g)} \right\| \leq \left\| (1-f_t)(\delta_t\cdot g) \right\|_{1/\om}.$$
On the other hand, from (2),

\begin{eqnarray*}
\left\| (1-f_t)(\delta_t\cdot g) \right\|_{1/\om} & \leq & \sup
\{ \frac{| (\delta_t\cdot g)(s)|}{\om(s)} \mid s\notin F_t \} \\
&=& \sup
\{ \frac{| g(s+t)|}{\om(s)} \mid s\notin F_t \}  \\
&=& \sup
\{ \frac{| g(s+t)|}{\om(s+t)}\frac{ \om(s+t)}{\om(s)} \mid s\notin F_t \}\\
&\leq& \left\|g \right\|_{1/\om} (M^{-1}\sg(t)+\epsilon).
\end{eqnarray*}
Therefore
$$ \left\| \delta_t\cdot \textbf{x} \right\| \leq \left\|g \right\|_{1/\om} (M^{-1}\sg(t)+\epsilon),$$
and so, (1) follows since $ \left\| \textbf{x} \right\|=\inf \{ \left\|g \right\|_{1/\om} \mid \tilde{g}=\textbf{x}
\}$ and $\epsilon$ was arbitrary.

Now suppose that $\mu \in M_\om(G)$. By \cite[Proposition 7.15]{DL}, the map
$$ s\mapsto \delta_s \cdot \textbf{x}, \ \ \ G \rightarrow X_\om $$
is continuous. Hence it is $|\mu|$-measurable. Moreover, from (1),
$$\int_G \left\| \delta_s \cdot \textbf{x} \right\| d|\mu| \leq M^{-1}\int_G \left\| \textbf{x} \right\| \sg(s) d|\mu|
=M^{-1} \left\| \textbf{x} \right\|\left\| \mu \right\|_\sg.$$
Therefore the Bohner integral $\mu \cdot \textbf{x}= \int_G \delta_s \cdot \textbf{x}  d\mu$ is well-defined and $\left\| \mu \cdot \textbf{x} \right\| \leq M^{-1}\left\| \textbf{x} \right\|\left\| \mu \right\|_\sg$ \cite[Appendix B.12]{DF}. 

The essentiality of $X_\om$ follows simply because $X_\om$ is the closure of $C_{00}(G)\cdot X_\om$.
\end{proof}

\begin{thm}\label{T:2-weak amenable-weaker version}
Let $\omega\geqslant 1$ be a weight on $G$. Suppose that $\mathcal{Z}^1(L^1_{\om\sg_\om}(G),\C_{\varphi^{\om\sg_\om}_0})=\{0\}$.
Then $L^1_\om(G)$ is $2$-weakly amenable.
\end{thm}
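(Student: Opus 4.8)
The plan is to reduce 2-weak amenability of $L^1_\om(G)$ to the vanishing statement supplied by Theorem \ref{T:Beurling alg-der}, by exhibiting the second dual $L^1_\om(G)^{**}$ as (the dual of) a suitable essential Banach $L^1_{\sg_\om}(G)$-module. The point of introducing the auxiliary weight $\sg_\om$ in Lemma \ref{L:om-sg euivalent} and the module structure in Lemma \ref{L:om-sg module} is exactly to arrange that a natural $L^1_\om(G)$-module involved in the $2$-weak amenability question actually carries an $L^1_{\sg_\om}(G)$-action, so that Theorem \ref{T:Beurling alg-der} applies with $\sg:=\sg_\om$ (after checking its hypotheses $\om\geq 1$, $\om\geq\sg_\om$, $\om\tilde{\sg}_\om\geq 1$).

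First I would recall the standard reduction for $2$-weak amenability. A derivation $D\from L^1_\om(G)\to L^1_\om(G)^{**}$ is inner (here, since $G$ is abelian and all modules are symmetric, ``inner'' means zero on the commutator level) precisely when the derivation problem into the bidual can be controlled by a derivation problem into a dual module built from $L^\infty_{1/\om}(G)$. The key structural fact, to be assembled from the module decompositions already in the excerpt, is $LUC_{1/\om}(G)=L^1_\om(G)L^\infty_{1/\om}(G)$ together with $C_{0,1/\om}(G)$ being a submodule; this identifies the relevant quotient as $X_\om=LUC_{1/\om}(G)/C_{0,1/\om}(G)$. By Lemma \ref{L:om-sg module}, $X_\om$ is an essential Banach $L^1_{\sg_\om}(G)$-module. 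Thus the derivation $D$, after composing with the natural maps relating $L^1_\om(G)^{**}$ to the dual of $X_\om$, is realized as a continuous derivation from $L^1_\om(G)$ into the dual of an essential $L^1_{\sg_\om}(G)$-module.

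Next I would verify that the hypotheses of Theorem \ref{T:Beurling alg-der} hold with $\sg=\sg_\om$. We have $\om\geq 1$ by assumption; $\om\geq\sg_\om$ follows from $\om_1\leq\om$ (noted just before Lemma \ref{L:om-sg euivalent}) together with the comparison $\sg_\om\leq N\om_1$ from Lemma \ref{L:om-sg euivalent}(ii), possibly after absorbing constants or a harmless equivalence of weights; and $\om\tilde{\sg}_\om\geq 1$ follows similarly from $\om\tilde{\om}_1\geq 1$. Given the standing hypothesis $\mathcal{Z}^1(L^1_{\om\sg_\om}(G),\C_{\varphi^{\om\sg_\om}_0})=\{0\}$, Theorem \ref{T:Beurling alg-der} then yields that every continuous derivation from $L^1_\om(G)$ into any Banach $L^1_{\sg_\om}(G)$-module $\X$ is zero; applied to $\X=X_\om^*$ this kills the derivation $D$, giving $2$-weak amenability.

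The main obstacle I expect is the bookkeeping in the reduction step: carefully tracing a derivation $D\from L^1_\om(G)\to L^1_\om(G)^{**}$ down to a derivation into $X_\om^*$ and arguing that its vanishing forces $D$ itself to vanish. This requires identifying the appropriate $L^1_\om(G)$-submodule and quotient of $L^\infty_{1/\om}(G)$ whose annihilators and preduals account for the ``interesting'' part of $D$, and checking that the pieces discarded in passing to the quotient $X_\om$ (namely those landing in the annihilator of $C_{0,1/\om}(G)$, or handled by essentiality and a bounded approximate identity) contribute nothing. Once this identification is in place, the verification of the weight inequalities and the invocation of Theorem \ref{T:Beurling alg-der} are routine; the real content is the observation, already foreshadowed by Lemma \ref{L:om-sg module}, that the second-dual derivation problem for $L^1_\om(G)$ is governed by an $L^1_{\sg_\om}(G)$-module rather than by $L^1_\om(G)$ itself.
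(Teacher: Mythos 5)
Your proposal is correct and matches the paper's proof: the paper makes your ``reduction step'' precise by quoting from the proof of \cite[Theorem 13.1]{DL} that $\im D\subseteq C_{0,1/\om}(G)^\perp=[L^\infty_{1/\om}(G)/C_{0,1/\om}(G)]^*$, then passes to $X_\om^*$ via Johnson's theorem \cite[Corollary 2.9.27]{D} (using the bounded approximate identity and $LUC_{1/\om}(G)=L^1_\om(G)L^\infty_{1/\om}(G)$), and finally applies Lemma \ref{L:om-sg module} and Theorem \ref{T:Beurling alg-der} with the weight inequalities $N^{-1}\sg_\om\leq\om$ and $\om\tilde{\sg}_\om\geq M$ handled up to constants, exactly as you suggest. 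The ``main obstacle'' you flag is thus resolved in the paper by these two citations rather than by new work, so your route is essentially identical to the paper's.
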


\begin{proof}
Let $D \from L^1_\om(G) \to L^1_\om(G)^{**}$ be a bounded derivation, and let $$X_\om=LUC_{1/\om}(G)/C_{0,1/\om}(G).$$ By the first paragragh in the proof of
\cite[Theorem 13.1]{DL}, $\im D \subseteq (C_{0,1/\om}(G))^\bot$, where
$$ C_{0,1/\om}(G)^\bot=\{ M\in L^1_\om(G)^{**} \mid M=0 \ \text{on} \ C_{0,1/\om}(G) \}.$$
Hence $D$ can be regarded as a bounded derivation from $L^1_\om(G)$ into $C_{0,1/\om}(G)^\bot=[L^\infty_{1/\om}(G)/C_{0,1/\om}(G)]^*$. 

On the other hand, since $L^1_\om(G)$ has a bounded approximate identity, by a result of Johnson \cite[Corollary 2.9.27]{D},

\begin{eqnarray*}
\mathcal{Z}^1[L^1_\om(G),(\frac{L^\infty_{1/\om}(G)}{C_{0,1/\om}(G)})^*]
&=& \mathcal{Z}^1[L^1_\om(G),(\frac{L^1_\om(G) L^\infty_{1/\om}(G)}{C_{0,1/\om}(G)})^*] \\
&=& \mathcal{Z}^1[L^1_\om(G),X_\om^*] \\
&=& 0,
\end{eqnarray*}
where the last equality follows from Lemma \ref{L:om-sg module} and Theorem \ref{T:Beurling alg-der}.
We note that, by Lemma \ref{L:om-sg euivalent}, there are $M,N>0$ such that $N^{-1}\sg_\om \leq \om$
and $\om\tilde{\sg}_\om \geq M$. Hence one can easily verify that Theorem \ref{T:Beurling alg-der} is valid for
$\om$ and $\sg_\om$.
\end{proof}

\begin{thm}\label{T:2-weak amenable}
Let $\omega\geqslant 1$ be a weight on $G$. Suppose that $\sg_\om$ is bounded. Then $\mathcal{Z}^1(L^1_\om(G),\C_{\varphi^\om_0})=\{0\}$ if and only if $L^1_\om(G)$ is $2$-weakly amenable.
\end{thm}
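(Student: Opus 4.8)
The plan is to treat the two implications separately, deriving the forward direction directly from Theorem \ref{T:2-weak amenable-weaker version} and reserving the real work for the converse.

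First I would record the key consequence of boundedness of $\sg_\om$. By Lemma \ref{L:om-sg euivalent}, boundedness of $\sg_\om$ is equivalent to boundedness of $\om_1$; and since $\om_1$ is subadditive with $\om_1(e)=1$, one has $\om_1(t)\om_1(-t)\geq 1$, so if $\om_1\leq C$ then also $\om_1\geq 1/C$. Thus $\sg_\om$ is bounded both above and below by positive constants, whence the weights $\om\sg_\om$ and $\om\tilde{\sg}_\om$ are each equivalent to $\om$ and $L^1_{\om\sg_\om}(G)=L^1_\om(G)$ as Banach algebras (with equivalent norms and the same augmentation character). In particular $\mathcal{Z}^1(L^1_{\om\sg_\om}(G),\C_{\varphi^{\om\sg_\om}_0})=\mathcal{Z}^1(L^1_\om(G),\C_{\varphi^\om_0})$. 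Therefore, if $\mathcal{Z}^1(L^1_\om(G),\C_{\varphi^\om_0})=\{0\}$, the hypothesis of Theorem \ref{T:2-weak amenable-weaker version} holds and $L^1_\om(G)$ is $2$-weakly amenable; this settles the implication from the vanishing of point derivations to $2$-weak amenability with no further work.

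For the converse I would argue by contraposition. First I would reduce the target module: as in the proof of Theorem \ref{T:2-weak amenable-weaker version}, every bounded derivation $D\from L^1_\om(G)\to L^1_\om(G)^{**}$ has range in $C_{0,1/\om}(G)^\bot=(L^\infty_{1/\om}(G)/C_{0,1/\om}(G))^*$, and Johnson's factorization (\cite[Corollary 2.9.27]{D}) together with $LUC_{1/\om}(G)=L^1_\om(G)L^\infty_{1/\om}(G)$ identifies the relevant derivation space with $\mathcal{Z}^1(L^1_\om(G),X_\om^*)$, where $X_\om=LUC_{1/\om}(G)/C_{0,1/\om}(G)$. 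Hence $L^1_\om(G)$ is $2$-weakly amenable if and only if $\mathcal{Z}^1(L^1_\om(G),X_\om^*)=\{0\}$, and it will suffice to show that a non-zero $d\in\mathcal{Z}^1(L^1_\om(G),\C_{\varphi^\om_0})$ yields a non-zero element of $\mathcal{Z}^1(L^1_\om(G),X_\om^*)$. The device I would use is a non-zero $\varphi^\om_0$-eigenvector $\Xi\in X_\om^*$, i.e. one with $a\cdot\Xi=\varphi^\om_0(a)\Xi$: given such a $\Xi$, the map $D(a):=d(a)\Xi$ is immediately a derivation into the symmetric module $X_\om^*$, non-zero because $d\neq 0$ and $\Xi\neq 0$.

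To build $\Xi$ I would exploit boundedness of $\sg_\om$. The class $\tilde{\mathbf 1}$ of the constant function is a non-zero element of $X_\om$ (non-zero precisely because $\mathbf 1\notin C_{0,1/\om}(G)$, which forces one to dispose of the compact case separately, where weights are bounded, no non-zero point derivation exists, and the statement is immediate), and it is fixed by the translation action $s\mapsto\delta_s\cdot$ of $G$. By Lemma \ref{L:om-sg module} this action is implemented by operators of norm at most $M^{-1}\sg_\om(s)$, so boundedness of $\sg_\om$ makes it a \emph{uniformly bounded} representation of the abelian, hence amenable, group $G$ on $X_\om$. Averaging against a translation-invariant mean any functional that is $1$ on $\tilde{\mathbf 1}$ then produces a $G$-invariant $\Xi\in X_\om^*$ with $\Xi(\tilde{\mathbf 1})=1$; and writing the action of $a\in L^1_\om(G)$ as the Bochner integral $\int_G a(s)\,\delta_s\cdot(\,\cdot\,)\,ds$, legitimate since $\sg_\om$ bounded gives $L^1_\om(G)\subseteq M_{\sg_\om}(G)$, shows at once that $\Xi$ is the required $\varphi^\om_0$-eigenvector. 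This produces the non-zero derivation into $X_\om^*$ and contradicts $2$-weak amenability.

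I expect the eigenvector construction to be the main obstacle, that is, realizing the point derivation as a genuine (non-inner) derivation into the \emph{second} dual rather than merely into the first. The difficulty is that the natural $G$-action on $L^1_\om(G)^*=L^\infty_{1/\om}(G)$ has norms growing like $\om$, so no invariant functional can be extracted there; passing to the quotient $X_\om$, where Lemma \ref{L:om-sg module} tames the action to size $\sg_\om$, is exactly what licenses the averaging argument, and this is the point at which the hypothesis that $\sg_\om$ is bounded is indispensable.
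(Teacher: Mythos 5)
Your forward direction is correct and is essentially the paper's: once you note that $\sg_\om$ is bounded above and below, the weight $\om\os_\om$ is equivalent to $\om$, so the hypothesis of Theorem \ref{T:2-weak amenable-weaker version} reduces to $\mathcal{Z}^1(L^1_\om(G),\C_{\varphi^\om_0})=\{0\}$. Likewise, your reduction of $2$-weak amenability to the vanishing of $\mathcal{Z}^1(L^1_\om(G),X_\om^*)$ is legitimate and is also how the paper proceeds (the embedding of $X_\om^*$ into $L^1_\om(G)^{**}$ is \cite[Proposition 2.2]{GZ}). The problem is the converse, and specifically your eigenvector construction. You assert that $\tilde{\mathbf 1}\neq 0$ in $X_\om=LUC_{1/\om}(G)/C_{0,1/\om}(G)$ except when $G$ is compact. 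That is false: $\mathbf 1\in C_{0,1/\om}(G)$ if and only if $1/\om\in C_0(G)$, i.e.\ if and only if $\om(t)\to\infty$ as $t\to\infty$, which is exactly what happens in every noncompact application of this theorem --- polynomial weights of degree $\alpha\geq 1$ (the ``only if'' half of Theorem \ref{T:2-weak amenable-poly}(iii)), the exponential weights of Theorem \ref{T:2-weak amenable-exp}, and the condition-(S) weights of Theorem \ref{T:2-weak amenable-cond S}. These are precisely the cases where a nonzero point derivation exists and one must conclude failure of $2$-weak amenability; there your $\tilde{\mathbf 1}$ is the zero class, the averaged functional $\Xi$ is not forced to be nonzero, and the construction produces nothing. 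One could try to substitute the class of $\om$ itself (it lies in $LUC_{1/\om}(G)$ and is nonzero in $X_\om$), but its translation-invariance requires $\lim_{t\to\infty}\om(t+s)/\om(t)=1$, whereas the hypothesis only gives $\limsup_{t\to\infty}\om(t+s)/\om(t)$ bounded; so even this repair does not cover the theorem's stated generality.

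More structurally, your strategy needs the nonzero derivation into $X_\om^*$ to exist in rank-one form $D=d(\cdot)\Xi$ with $\Xi$ a $\varphi^\om_0$-eigenvector, and you give no argument that such an eigenvector exists whenever $\sg_\om$ is bounded --- this is a genuinely nontrivial claim, and nothing in the theorem forces the witnessing derivation to have this form. The paper avoids the issue entirely by arguing in the contrapositive with a duality computation: assuming $\mathcal{Z}^1(L^1_\om(G),X_\om^*)=0$, any functional $T$ annihilating $\ker \pi^{L^1(G)}_\om \ker \pi^{X_\om}_\om$ is converted into a derivation into $\mathcal{B}_{L^1_\om(G)}(L^1_\om(G),X_\om^*)\cong X_\om^*$, hence vanishes on $\ker\pi^{X_\om}_\om$, giving $\ker \pi ^{X_\om}_\om=[\ker \pi ^{L^1(G)}_\om \ker \pi ^{X_\om}_\om]^-$; this is then transported through $\alpha_X$, the invertible $\Lambda_X$ of Theorem \ref{T: Transform TH}(iii) (available since $\sg=1$ here), and Lemma \ref{L:augmentation-Kernel-module} to yield $I_0=\overline{I_0^2}$, which rules out nonzero continuous point derivations at $\varphi^\om_0$ by \cite[Proposition 1.8.8]{D}. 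Until you either prove the existence of a nonzero $\varphi^\om_0$-eigenvector in $X_\om^*$ under the sole hypothesis that $\sg_\om$ is bounded, or replace the eigenvector device with an argument of the paper's kernel-factorization type, the converse direction of your proof has a genuine gap.
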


\begin{proof}
``$\Rightarrow$" Since $\sg_\om$ is bounded, $L^1_\om(G)$ is a dense subalgebra of $L^1_{\om\os_\om}(G)$. Hence $L^1_{\om\os_\om}(G)$ has no non-zero continuious point derivation at $\varphi^{\om\os}_0$, and so, the result follows from Theorem \ref{T:2-weak amenable-weaker version}.

``$\Leftarrow$" Let $X:=X_\om$ be as in Theorem \ref{T:2-weak amenable-weaker version}. By \cite[Proposition 2.2]{GZ}, $X^*$ is an $L^1_\om(G)$-submodule of $L^1_\om(G)^{**}$. Thus,
by hypothesis,
$$\mathcal{Z}^1[L^1_\om(G),X^*]=0. \eqno{(1)}$$
On the other hand, by Lemma \ref{L:om-sg module} and the fact that $\sg_\om$ is bounded, $X$ is an essential Banach $L^1(G)$-module. Therefore from the argument presented in the proof of Corollarly \ref{C: Characterization-kernel} (equation 1) and Theorem \ref{T:Beurling alg-der} (equation 2) we have

\begin{eqnarray*}
[\ker \pi ^{L^1(G)}_\om \ker \pi ^X_\om]^- &=& \overline{\spn} \{ f\cdot u-u\cdot f \mid f \in L^1_\om(G), u\in \ker \pi^X_\om \} \\ & \subseteq& \ker \pi^X_\om
\\ &=& \overline{\spn} \{ gh\otimes x-g\otimes hx \mid g,h \in L^1_\om(G), x\in X\}.
\end{eqnarray*}

We claim that
$$ \ker \pi ^X_\om=[\ker \pi ^{L^1(G)}_\om \ker \pi ^X_\om]^-. \eqno{(\star)}$$
Let $T\in (L^1_\om(G)\widehat{\otimes} X)^*$ such that $T=0$ on $\ker \pi ^{L^1(G)}_\om \ker \pi ^X_\om$. Hence, for every $f,g,h \in L^1_\om(G)$ and $x\in X$,
$$T[f\cdot (gh\otimes x-g\otimes hx)-(gh\otimes x-g\otimes hx)\cdot f]=0.$$
Thus if we let $\T \from L^1_\om(G) \to X^*$ be the bounded operator
defined by
$$ \la \T(f) \ , \ x \ra=T(f \otimes x) \ \ \ (f \in L^1_\om(G), x\in X),$$
then a simple calculation shows that $$\T(fgh)-f\T(gh)-\T(fg)h+f\T(g)h=0 \ \ \ (f,g,h \in L^1_\om(G)). \eqno{(2)}$$
Define the bounded operator $D \from L^1_\om(G) \to
\mathcal{B}_{L^1_\om(G)}(L^1_\om(G),X^*)$ by
$$ D(f)(g)=\T(fg)-f\T(g) \ \ (f,g\in L^1_\om(G)). \eqno{(3)}$$
From (2), it is easy to verify that $D$ is well-defined. Moreover,
upon setting
$$\la f\cdot S \ , \ x \ra =\la S\cdot f \ , \ x\ra =\la S \ , \ fx \ra,$$ the space
$\mathcal{B}_{L^1_\om(G)}(L^1_\om(G),X^*)$ becomes a Banach $A$-module and
$\mathcal{D}$ becomes a bounded derivation from $L^1_\om(G)$ into
$\mathcal{B}_{L^1_\om(G)}(L^1_\om(G),X^*)$. However, since $L^1_\om(G)$ has a bounded approximate identity, $\mathcal{B}_{L^1_\om(G)}(L^1_\om(G),X^*)$ is
isometric with $X^*$ as Banach $L^1_\om(G)$-module. Thus, from (1), $D=0$.
Therefore, from (3), $\T(fg)=f\T(g)$. So $T$ vanishes on
$$\overline{\spn} \{ gh\otimes x-g\otimes hx \mid g,h \in L^1_\om(G), x\in X\}=\ker \pi^X_\om.$$
Thus ($\star$) holds. Hence, by Theorem \ref{T: Alpha-algebraic}(i) and Corollary \ref{C: classification of kernel}(i),
$$\ker \Pi ^X_\om=[\ker \Pi ^{L^1(G)}_\om \ker \Pi ^X_\om]^-.$$
However, from Theorem \ref{T: Transform TH}(iii), $\Lambda_X$ is invertible, and so, by Theorem
\ref{T: Kernel-Agumentation},
\begin{eqnarray*}
\ker \Phi ^X_\om &=& \Lambda_X^{-1}(\ker \Pi ^X_\om) \\
&=& [\Lambda_{L^1(G)}^{-1}(\ker \Pi ^{L^1(G)}_\om)\Lambda_X^{-1}(\ker \pi ^X_\om)]^- \\
&=& [\ker \Phi ^{L^1(G)}_\om \ker \Phi ^X_\om]^-.
\end{eqnarray*}
Therefore, by Corollary \ref{C: classification of kernel}(ii),
$$ \ker \phi ^X_\om=[\ker \phi ^{L^1(G)}_\om \ker \phi ^X_\om]^-.$$
It follows from Lemma \ref{L:augmentation-Kernel-module} that $$I_0\widehat{\otimes} X=[(I_0 \widehat{\otimes} L^1(G)) ( I_0 \widehat{\otimes}  X )]^-.$$
Hence $I_0=\overline{I_0^2}$. 

This completes the proof.
\end{proof}

\begin{rem}\label{T:2-weak amenable-tensor}
Let $\{ L^1_{\om_i}(G_i) \}_{i=1}^n$ be a finite set of Beurling algebras, and let $\om:=\om_1\times \cdots \times \om_n$ be the function on $G:=G_1\times \cdots \times G_n$ defined by
$$ \om(t_1, \cdots , t_n)=\prod_{i=1}^n \om_i(t_i) \ \ (0\leq i \leq n, t_i \in G_i).$$
It is well-known that $\om$ is a weight on $G$ so that $L^1_\om(G)$ is algebraiclly isomorphic with
$\hat{\otimes}_{i=1}^n  L^1_{\om_i}(G_i)$ \cite[Proposition 1.2]{G2}. Now suppose that, for each $i$, $\om_i \geq 1$, $\sg_{\om_i}$ is bounded, and $L^1_{\om_i}(G_i)$ is 2-weakly amenable.
Then $L^1_\om(G)$ is 2-weakly amenable. Indeed, since $\sg_\om$ is bounded,
by the preceding theorem, it suffices to show that $\mathcal{Z}^1(L^1_{\om}(G),\C_{\varphi^{\om}_0})=\{0\}.$
However from our assumption and Theorem \ref{T:2-weak amenable}, $\mathcal{Z}^1(L^1_{\om_i}(G_i),\C_{\varphi^{\om}_0})$ vanishes for each $i$. Therefore it can be easily shown that $\mathcal{Z}^1(L^1_{\om}(G),\C_{\varphi^{\om}_0})=\{0\}$: this follows from the fact that
$\varphi^{\om}_0=\otimes_{i=1}^n \varphi^{\om_i}_0$.
\end{rem}

We finish this section with the following corollary, which was obtained in \cite{GZ} by a different method.

\begin{cor}\label{C:2-weak amenable-inf}
Let $\omega\geqslant 1$ be a weight on $G$. Suppose that $\sg_\om$ is bounded and $\inf \{ \omega (nt)/n \mid n\in \N \}=0$. Then $L^1_\om(G)$ is $2$-weakly amenable.
\end{cor}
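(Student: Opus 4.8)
The plan is to derive this corollary directly from Theorem \ref{T:2-weak amenable} by verifying its single nontrivial hypothesis, namely that $\mathcal{Z}^1(L^1_\om(G),\C_{\varphi^\om_0})=\{0\}$. Since $\sg_\om$ is assumed bounded, Theorem \ref{T:2-weak amenable} asserts the equivalence of this vanishing with $2$-weak amenability of $L^1_\om(G)$; thus the entire task reduces to showing that the condition $\inf \{ \om(nt)/n \mid n\in\N \}=0$ for all $t\in G$ forces $L^1_\om(G)$ to have no non-zero continuous point derivation at the augmentation character.

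First I would invoke Proposition \ref{P:Point Der-Beurling alg}(ii), which is stated for a weight $\om$ with $\om\geqslant 1$ and which says precisely that the hypothesis $\inf \{ \om(nt)/n \mid n\in\N \}=0$ (for every $t\in G$) implies $\mathcal{Z}^1(L^1_\om(G),\C_{\varphi^\om_0})=\{0\}$. This is exactly the point-derivation vanishing needed to feed into Theorem \ref{T:2-weak amenable}. The argument behind that proposition is the standard one: any point derivation $d$ at $\varphi^\om_0$ restricts to $l^1_\om(G)$, on which $d(\delta_{nt})=nd(\delta_t)$ gives $\Vert d(\delta_t)\Vert \leqslant \Vert d\Vert\,\om(nt)/n$, and the infimum hypothesis sends the right-hand side to zero; part (i) of the proposition then lifts this from $l^1_\om(G)$ to $L^1_\om(G)$ via strong-operator density.

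Having established $\mathcal{Z}^1(L^1_\om(G),\C_{\varphi^\om_0})=\{0\}$, I would then apply the ``$\Leftarrow$'' direction of Theorem \ref{T:2-weak amenable} — no, rather the ``$\Rightarrow$'' direction: the vanishing of point derivations together with the boundedness of $\sg_\om$ yields $2$-weak amenability. Concretely, boundedness of $\sg_\om$ makes $L^1_\om(G)$ a dense subalgebra of $L^1_{\om\os_\om}(G)$, so the absence of non-zero continuous point derivations passes to $L^1_{\om\os_\om}(G)$ at $\varphi^{\om\os}_0$, and Theorem \ref{T:2-weak amenable-weaker version} then delivers $2$-weak amenability. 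This is in fact all packaged inside Theorem \ref{T:2-weak amenable}, so the corollary follows in one line once the point-derivation hypothesis is checked.

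There is essentially no genuine obstacle here, since all the substantive work has been done in the preceding results; the corollary is a straightforward specialization. The only point requiring a moment's care is confirming that the two hypotheses of the corollary line up exactly with what Proposition \ref{P:Point Der-Beurling alg}(ii) and Theorem \ref{T:2-weak amenable} each demand — that $\om\geqslant 1$ (given) licenses Proposition \ref{P:Point Der-Beurling alg}, and that the boundedness of $\sg_\om$ (given) licenses the equivalence in Theorem \ref{T:2-weak amenable}. Once these bookkeeping checks are made, the proof is immediate.
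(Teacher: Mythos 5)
Your proposal is correct and matches the paper's proof exactly: Proposition \ref{P:Point Der-Beurling alg}(ii) gives $\mathcal{Z}^1(L^1_\om(G),\C_{\varphi^\om_0})=\{0\}$, and the ``$\Rightarrow$'' direction of Theorem \ref{T:2-weak amenable} (valid since $\sg_\om$ is bounded) then yields $2$-weak amenability. Your momentary hesitation about which direction of the theorem to invoke was resolved correctly in-line, and the rest is exactly the paper's two-line argument.
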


\begin{proof}
By Proposition \ref{P:Point Der-Beurling alg}, there is no non-zero continuous point derivation on $L^1_\om(G)$ at $\varphi^\om_0$. Hence the result follows from the preceding theorem.
\end{proof}

\section{weights on compactly generated abelian groups}\label{S:compactly generated groups}

Let $G$ be compactly generated (abelian) group. Then, by the Structure Theorem,
$$G\cong \R^k \times \Z^m \times T \eqno{(\star)}$$ where $k,m \in \N \cup \{0\}$ and $T$ is a compact (abelian) group. Therefore we can define a continuous sub-additive function on $G$ by
$$\left| t \right|=\left\| t_1 \right\|,$$
where $t=(t_1,t_2)\in G$, $t_1 \in \R^k \times \Z^m$, and $||\cdot||$ is the Euclidean norm on $\R^{k+m}$. We can use this function to construct different weights on $G$.

For $\alpha \geq 0$ and $C>0$, let
$$\om (t)=C(1+\left| t \right|)^\alpha \ \ (t\in G). $$
It is easy to see that $\om$ is a weight on $G$; it is called {\it polynomial of degree $\alpha$}. The following theorem is a generalization of \cite[Theorem 13.2 and 13.9]{DL}.

\begin{thm}\label{T:2-weak amenable-poly}
Let $G$ be a non-compact, compactly generated group, and let $\omega$ and $\sg$ be polynomial weights of degree $\alpha$ and $\beta$, respectively. Then:\\
$($i$)$ if $\alpha\geq \beta$ and $\alpha+\beta <1$, then, for any Banach $L^1_\sg(G)$-module $\X$, every continuous derivation from $L^1_\om(G)$ into $\X$ is zero:\\
$($ii$)$ $L^1_\om(G)$ is weakly amenable if and only if $\alpha< 1/2$;\\
$($iii$)$ $L^1_\om(G)$ is $2$-weakly amenable if and only if $\alpha < 1$.
\end{thm}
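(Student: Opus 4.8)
The plan is to prove each part by computing the relevant weight products and invoking the machinery built in the preceding sections, especially Theorem~\ref{T:Beurling alg-der}, Theorem~\ref{T:weak amenable}, and Theorem~\ref{T:2-weak amenable}. The essential preliminary observation is that for a polynomial weight $\om(t)=C(1+|t|)^\alpha$, the symmetrization behaves like $\Omega(t)=\om(t)\om(-t)\asymp (1+|t|)^{2\alpha}$ since $|{-t}|=|t|$, and the product of two polynomial weights of degrees $\alpha,\beta$ is (up to constants) a polynomial weight of degree $\alpha+\beta$. I would also first record that $\sg_\om$ is bounded for every polynomial weight: since $\om(t+s)/\om(t)=(1+|t+s|)^\alpha/(1+|t|)^\alpha\to 1$ as $|t|\to\infty$ for fixed $s$, we get $\om_1\equiv 1$, hence $\sg_\om$ is bounded by Lemma~\ref{L:om-sg euivalent}. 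This lets me apply Theorem~\ref{T:2-weak amenable} throughout.

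For part~(i), with $\alpha\geq\beta$ the hypotheses $\om\geq\sg$ (up to a constant, which is harmless after the usual rescaling) and the weight conditions of Theorem~\ref{T:Beurling alg-der} hold; the product $\om\tilde\sg$ is a polynomial weight of degree $\alpha+\beta<1$. I would check that $\inf_n (\om\tilde\sg)(nt)/n = \inf_n C'(1+n|t|)^{\alpha+\beta}/n$, and since $\alpha+\beta<1$ this infimum is $0$ for every $t\in G$ (for $t\neq e$ it goes to $0$ like $n^{\alpha+\beta-1}$, and for $t=e$ it is trivially $0$). By Proposition~\ref{P:Point Der-Beurling alg}(ii) applied to $\om\tilde\sg$, there is no non-zero continuous point derivation on $L^1_{\om\tilde\sg}(G)$ at the augmentation character, so $\mathcal{Z}^1(L^1_{\om\tilde\sg}(G),\C_{\varphi^{\om\tilde\sg}_0})=\{0\}$. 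Then Theorem~\ref{T:Beurling alg-der} gives that every continuous derivation from $L^1_\om(G)$ into any Banach $L^1_\sg(G)$-module vanishes.

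For part~(ii), weak amenability is governed by Theorem~\ref{T:weak amenable}: it holds if $\inf_n \Omega(nt)/n=0$ for all $t$, where $\Omega$ has degree $2\alpha$. For $\alpha<1/2$, the same computation as above gives $\inf_n C'(1+n|t|)^{2\alpha}/n=0$, so $L^1_\om(G)$ is weakly amenable. For the converse (the failure when $\alpha\geq 1/2$), I would invoke the known characterization of weak amenability via point derivations together with an explicit construction: when $2\alpha\geq 1$ one can exhibit a non-zero continuous point derivation on $L^1_\Omega(G)$, or equivalently a non-zero bounded derivation into the dual module, using that $G$ is non-compact so it contains a copy of $\R$ or $\Z$ on which $\Omega$ grows at least linearly, reducing to Gr\o nb\ae k's result that $\ell^1_\om(\Z)$ is weakly amenable iff $\inf_n \Omega(n)/n=0$. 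Part~(iii) is the analogue for $2$-weak amenability using Theorem~\ref{T:2-weak amenable}: since $\sg_\om$ is bounded, $2$-weak amenability is equivalent to $\mathcal{Z}^1(L^1_\om(G),\C_{\varphi^\om_0})=\{0\}$, and by the same argument this vanishes iff $\inf_n \om(nt)/n=0$ for all $t$, i.e.\ iff $\alpha<1$.

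The main obstacle will be the necessity directions (the ``only if'' clauses in (ii) and (iii)): the positive results follow cleanly from the abstract theorems, but to show that weak amenability \emph{fails} for $\alpha\geq 1/2$ and that $2$-weak amenability \emph{fails} for $\alpha\geq 1$, I must produce an explicit non-zero derivation (or non-zero continuous point derivation at $\varphi^\om_0$). The natural strategy is to restrict to a non-compact one-parameter or one-generator subgroup, where $\om$ dominates a polynomial of the critical degree, and transport Gr\o nb\ae k's discrete or $\R$-based counterexample back to $G$ via a continuous character or coordinate projection, using the structure decomposition $(\star)$; verifying that the resulting functional is genuinely a non-zero continuous point derivation, and controlling its boundedness against the weight, is the delicate computational step.
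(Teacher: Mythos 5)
Your sufficiency arguments are correct and coincide with the paper's: $\sg_\om$ is bounded for every polynomial weight since $\om(t+s)/\om(t)\to 1$, part (i) follows from Proposition \ref{P:Point Der-Beurling alg}(ii) applied to the degree-$(\alpha+\beta)$ weight $\om\os$ together with Theorem \ref{T:Beurling alg-der}, and the ``if'' halves of (ii) and (iii) follow as you say. The genuine gap is in the necessity directions, which you rightly flag as the obstacle but then sketch along lines that do not close. In (iii) you assert that $\mathcal{Z}^1(L^1_\om(G),\C_{\varphi^\om_0})=\{0\}$ if and only if $\inf_n\om(nt)/n=0$ for all $t$. Proposition \ref{P:Point Der-Beurling alg}(ii) gives only the ``if''; the ``only if'' is false in general, as the paper itself emphasizes in the remark following Theorem \ref{T:weak amenable}: Proposition \ref{P:2-weak amenable-sharp weights} exhibits weights with one-sided exponential growth, hence with $\inf_n\om(nt)/n>0$ for suitable $t$, which nevertheless admit no nonzero continuous point derivations. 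For polynomial weights the biconditional does hold, but it needs an argument you never supply: the paper uses that for $\alpha\geq 1$ the Fourier transforms of elements of $L^1_{\om_{|\R}}(\R)$ and $l^1_{\om_{|\Z}}(\Z)$ are continuously differentiable \cite[Theorems 13.2 and 13.9]{DL}, so differentiation of the transform at a point of the dual group is a nonzero continuous point derivation at the augmentation character, which pulls back to $L^1_\om(G)$ along the coordinate-projection epimorphism coming from the structure decomposition; Theorem \ref{T:2-weak amenable} (legitimate here since $\sg_\om$ is bounded) then rules out $2$-weak amenability. Your plan to ``transport Gr{\o}nb{\ae}k's counterexample'' conflates this with the weak-amenability obstruction: Gr{\o}nb{\ae}k's construction for $\inf_n\Omega(n)/n>0$ is a bounded derivation into the dual module, not a point derivation, and the two are not ``equivalent'' as you claim.

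The same unjustified equivalence undermines your converse in (ii): a nonzero continuous point derivation on the symmetrized algebra $L^1_\Omega(G)$ does not by itself obstruct weak amenability of $L^1_\om(G)$ --- Theorem \ref{T:Beurling alg-der} is a one-way implication and the paper proves no converse at this level of generality. The paper's actual argument for $\alpha\geq 1/2$ is hereditary and citation-level: since $G$ is non-compact it has $\R$ or $\Z$ as a direct summand, so there is a continuous algebra epimorphism from $L^1_\om(G)$ onto $L^1_{\om_{|\R}}(\R)$ or $l^1_{\om_{|\Z}}(\Z)$; weak amenability passes to such images, and these target algebras are known not to be weakly amenable for polynomial degree at least $1/2$ (\cite[Theorem 7.43]{DL}, \cite[Corollary 5.6.19]{D}, going back to \cite{G1}). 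Note also the direction of transport: restricting $\om$ to a one-generator subgroup produces a closed subalgebra, and non-weak-amenability of a subalgebra implies nothing about the ambient algebra; you must use the quotient map, so that the failure (or the point derivation, in (iii)) lifts back. With these two repairs --- quotient rather than restriction, and the Fourier-transform differentiability argument in place of the false biconditional --- your outline becomes the paper's proof.
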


\begin{proof}
(i) follows from Theorem \ref{T:Beurling alg-der} and Proposition \ref{P:Point Der-Beurling alg}(ii). 

For (ii), if $\alpha <1/2$, then, from part (i), $L^1_\om(G)$ is weakly amenable. Conversely, suppose that $\alpha \geq 1/2$. Since $G$ is not compact, it has a copy of $\R$ or $\Z$ as a direct sum. Hence there is a continuous algebraic homomorphism from $L^1_\om(G)$ onto either $L^1_{\om_{|\R}}(\R)$ or
$l^1_{\om_{|\Z}}(\Z)$. However, neither of these algebras are weakly amenable \cite[Theorem 7.43]{DL} and \cite[Corollary 5.6.19]{D}. Hence $L^1_\om(G)$ is not weakly amenable. 

Finally, for
(iii), it is easy to see that $\dss\limsup_{t\to\infty} \frac{\om(t+s)}{\om(t)}=1$. Hence, by Theorem \ref{T:2-weak amenable}, it suffices to show that there is no non-zero continuous point derivation on $L^1_{\om}(G)$ at $\varphi^{\om}_0$ if and only if $\alpha < 1$. The ``if"
part follows from Proposition \ref{P:Point Der-Beurling alg}(ii) and the ``only if" part follows from the fact that, for $\alpha \geq 1$, the Fourier transform of the elements of $L^1_{\om_{|\R}}(\R)$ and $l^1_{\om_{|\Z}}(\Z)$ are continuously differentiable \cite[Theorem 13.2 and 13.9]{DL}. This gives us a non-zero continuous point derivation on $L^1_{\om}(G)$.
\end{proof}

Another family of weights that are considered on compactly generated groups are the exponential weights. A weight $\om$ is said to be {\it exponential of degree $\alpha$}, $0\leq \alpha \leq 1$, if there exists $C>0$ such that
$$ \om(t)=e^{C|t|^\alpha}, \ \ (t\in G).$$
By our method, we can investigate the question of 2-weak amenability for these families of Beurling algebras.

\begin{thm}\label{T:2-weak amenable-exp}
Let $G$ be a non-compact, compactly generated group, and let $\om$ be an exponential weight of degree $\alpha$. Then $L^1_{\om}(G)$ is not $2$-weakly amenable if $0< \alpha < 1$.
\end{thm}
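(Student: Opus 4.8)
The plan is to invoke Theorem \ref{T:2-weak amenable}, which for a weight with $\sg_\om$ bounded reduces $2$-weak amenability to the absence of non-zero continuous point derivations at $\varphi^\om_0$. Thus I must establish two facts: first, that $\sg_\om$ is bounded, and second, that $L^1_\om(G)$ \emph{does} carry a non-zero continuous point derivation at $\varphi^\om_0$. Granting both, the ``if and only if'' of Theorem \ref{T:2-weak amenable} forces $L^1_\om(G)$ to fail $2$-weak amenability.

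For the boundedness of $\sg_\om$, by Lemma \ref{L:om-sg euivalent}(ii) it suffices to show that $\om_1$ is bounded, and in fact I expect $\om_1 \equiv 1$. Writing $\om(t)=e^{C|t|^\alpha}$ one has
$$\frac{\om(t+s)}{\om(t)}=e^{C(|t+s|^\alpha-|t|^\alpha)}.$$
Since $\bigl| |t+s|-|t| \bigr|\leq |s|$ by subadditivity of $|\cdot|$, the mean value theorem applied to $r\mapsto r^\alpha$ gives $\bigl| |t+s|^\alpha-|t|^\alpha \bigr|\leq \alpha\,\xi^{\alpha-1}|s|$ for some $\xi$ between $|t|$ and $|t+s|$. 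As $t$ leaves every compact set one has $|t|\to\infty$ (the $T$-factor being compact), so $\xi\to\infty$, and because $0<\alpha<1$ the exponent tends to $0$. Hence $\om_1(s)=\limsup_{t\to\infty}\om(t+s)/\om(t)=1$ for every $s\in G$, so $\om_1$ is bounded and therefore $\sg_\om$ is bounded.

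To produce the point derivation, I would first identify continuous point derivations at $\varphi^\om_0$ with additive homomorphisms lying in $L^\infty_{1/\om}(G)$. A continuous functional of the form $d(f)=\int_G f(t)\psi(t)\,dt$ with $\psi\in L^\infty_{1/\om}(G)$ is a point derivation precisely when $\psi(s+u)=\psi(s)+\psi(u)$, that is, when $\psi\from G\to\C$ is a continuous homomorphism; this is an elementary rewriting of $d(f*g)=d(f)\varphi^\om_0(g)+\varphi^\om_0(f)d(g)$ after the substitution in the convolution integral. Now the structure theorem $(\star)$ writes $G\cong\R^k\times\Z^m\times T$, and non-compactness forces $k+m\geq 1$; composing the projection onto the $\R^k\times\Z^m$ factor with a non-zero real-linear functional yields a non-zero continuous homomorphism $\psi$ with $|\psi(t)|\leq K|t|$ for some $K>0$. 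Because the map $r\mapsto r\,e^{-Cr^\alpha}$ is continuous and tends to $0$ as $r\to\infty$, the function $|t|/\om(t)$ is bounded on $G$ (the exponential $\om$ dominates the linear term $|t|$), so $\psi\in L^\infty_{1/\om}(G)$ and $d$ is a non-zero continuous point derivation at $\varphi^\om_0$.

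Combining the two steps, $\sg_\om$ is bounded while $\mathcal{Z}^1(L^1_\om(G),\C_{\varphi^\om_0})\neq\{0\}$, so Theorem \ref{T:2-weak amenable} shows that $L^1_\om(G)$ is not $2$-weakly amenable. The main obstacle is the boundedness computation for $\om_1$, since that is exactly where the hypothesis $\alpha<1$ enters: for $\alpha=1$ one finds $\om_1(s)=e^{C|s|}$, which is unbounded, and Theorem \ref{T:2-weak amenable} no longer applies. Thus the argument must be organized so that the superpolynomial-but-subexponential growth of $\om$ is used simultaneously to make $\om_1$ bounded and to keep the linear homomorphism $\psi$ dominated by $\om$.
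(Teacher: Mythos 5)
Your proposal is correct, and its skeleton is exactly the paper's: show $\sg_\om$ is bounded, exhibit a non-zero continuous point derivation at $\varphi^\om_0$, and conclude from the equivalence in Theorem \ref{T:2-weak amenable}. The one place you genuinely diverge is the production of the point derivation. The paper does this by the argument of Theorem \ref{T:2-weak amenable-poly}(iii): since $e^{C|t|^\alpha}$ dominates the polynomial weight of degree $1$, one passes to an $\R$ or $\Z$ factor and cites \cite[Theorems 13.2 and 13.9]{DL}, where elements of $L^1_{\om}(\R)$ and $l^1_{\om}(\Z)$ are shown to have continuously differentiable Fourier transforms, and differentiation at the trivial character supplies the point derivation. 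You instead build the functional directly, $d(f)=\int_G f(t)\psi(t)\,dt$ with $\psi$ a non-zero continuous additive homomorphism obtained from the $\R^k\times\Z^m$ factor of the structure theorem, and verify $\psi\in L^\infty_{1/\om}(G)$ from $|t|e^{-C|t|^\alpha}\to 0$. These are at bottom the same derivation --- unwinding the paper's argument, the derivative of the Fourier transform at the trivial character is $\int_G f(t)(-it)\,dt$, i.e.\ integration against a homomorphism --- but your version is more self-contained: it avoids both the citation to \cite{DL} and the reduction to a factor of $G$, and it makes explicit exactly where each hypothesis enters ($\alpha>0$ so that $\om$ dominates the linear homomorphism; $\alpha<1$ so that $\om_1$ is bounded, which you actually prove via the mean value theorem, with the needed observation that $t\notin K$ forces $|t|$ large because the $T$-factor is compact, where the paper merely asserts ``it is easy to see''). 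Your closing observation that the argument collapses at $\alpha=1$ because $\om_1(s)=e^{C|s|}$ is unbounded matches the paper's subsequent remark that $l^1_\om(\Z)$ with $\om(n)=e^{|n|}$ is in fact $2$-weakly amenable, so the failure at $\alpha=1$ is not an artifact of the method.
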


\begin{proof}
It is easy to see that, for $0< \alpha < 1$, $\dss\limsup_{t\to\infty} \frac{\om(t+s)}{\om(t)}$ is bounded by 1. Also a similar argument to that presented in Theorem \ref{T:2-weak amenable-poly}(iii) gives us a non-zero continuous point derivation on $L^1_\om(G)$. Hence the result follows from Theorem \ref{T:2-weak amenable}.
\end{proof}

\begin{rem}
(i) We note that the result of the preceeding theorem holds, with the same argument, for weights of the form $\om(t)=e^{\rho(|t|)}$, where $\rho$ is a positive increasing sub-additive function which belongs to the Lipschitz algebra on $\R^+=(0, \infty)$ with the degree $0< \alpha < 1$.\\
(ii) We would like to point out that the result of Theorem \ref{T:2-weak amenable-exp} is not true in general when
$\alpha=1$. Indeed, it is demonstrated in \cite[Theorem 13.3]{DL} that for $\om(n)=e^{|n|}$, $l^1_\om(\Z)$ is 2-weakly amenable.
\end{rem}

Theorem \ref{T:2-weak amenable-exp} can be generalized to a larger class of weights. Let $q \from \R^+ \to \R^+$ be a decreasing continuous function such that
$$\dss\lim_{r\to+\infty} q(r)=0 \ \ \text{and} \ \ \dss\lim_{r\to+\infty} rq(r)=\infty.$$
Then the function $\om \from G \to [1, \infty)$ given by
$$\om(t)= e^{|t|q(|t|)}, \ \ (t\in G)$$
is a weight on $G$. All weights constructed as above belong to a family of weights that satisfy a so-called {\it condition (S)}. This condition is defined in order to get the symmetry of the certain weighted group algebras on non-abelian groups. We refer the reader to \cite{DLC} and \cite{FGLLC} for more details.

\begin{thm}\label{T:2-weak amenable-cond S}
Let $G$ be a non-compact, compactly generated group, and let $q$ and $\omega$ be as above. Suppose that $rq(r) \geq \ln (1+r)$ for sufficiently large $r$. Then $L^1_\om(G)$ is not $2$-weakly amenable.
\end{thm}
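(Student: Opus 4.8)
The plan is to derive the result from Theorem~\ref{T:2-weak amenable}: I will verify that $\sg_\om$ is bounded and that, nonetheless, $L^1_\om(G)$ carries a non-zero continuous point derivation at $\varphi^\om_0$. Granting both facts, the equivalence in Theorem~\ref{T:2-weak amenable} forces $L^1_\om(G)$ to fail to be $2$-weakly amenable. Thus the argument splits into (a) the boundedness of $\sg_\om$, which is the analytic input coming from $q(r)\to 0$, and (b) the construction of the point derivation, which is where the lower bound $rq(r)\geq\ln(1+r)$ enters.

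For (a), write $\psi(r)=rq(r)$, so that $\om(t)=e^{\psi(|t|)}$. I first note that $\psi$ is subadditive on $[0,\infty)$: since $q$ is decreasing, $(r_1+r_2)q(r_1+r_2)\leq r_1 q(r_1)+r_2 q(r_2)$. For fixed $s\in G$ one has $\bigl| |t+s|-|t| \bigr|\leq |s|$, so that $\om(t+s)/\om(t)=e^{\psi(|t+s|)-\psi(|t|)}$ is governed by the increments of $\psi$ over intervals of bounded length far out. The regularity of $q$ (namely $q(r)\to 0$, exactly as in the exponential case treated in Theorem~\ref{T:2-weak amenable-exp}) keeps these increments bounded, and in fact drives them to $0$; hence $\om_1$ is bounded, and Lemma~\ref{L:om-sg euivalent} yields that $\sg_\om$ is bounded. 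I expect this to be the main obstacle: for a general decreasing $q$ the increment $b\bigl(q(b)-q(b+\delta)\bigr)$ over a step $\delta\leq|s|$ need not stay bounded, so one must use the concavity/regular-variation properties built into condition (S) (or differentiability of $q$) to control $\om_1$ rigorously.

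For (b), the hypothesis $rq(r)\geq\ln(1+r)$ for large $r$ gives $\om(t)=e^{|t|q(|t|)}\geq 1+|t|$ once $|t|$ is large, and since $\om\geq 1$ and $|t|$ is bounded on compacta this produces a constant $C>0$ with $|t|\leq C\,\om(t)$ for all $t\in G$. Because $G$ is non-compact and compactly generated, by the Structure Theorem $(\star)$ it has $\R$ or $\Z$ as a direct summand, so, exactly as in the proof of Theorem~\ref{T:2-weak amenable-poly}(iii), there is a continuous surjective algebra homomorphism $\theta$ of $L^1_\om(G)$ onto $L^1_{\om|_\R}(\R)$ (respectively $l^1_{\om|_\Z}(\Z)$), obtained by integrating out the complementary factor, and satisfying $\varphi^{\om|_\R}_0\circ\theta=\varphi^\om_0$. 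On the target, set $d_0(f)=\int_\R t\,f(t)\,dt$ (respectively $d_0(f)=\sum_{n\in\Z} n\,f(n)$); the bound $|t|\leq C\,\om(t)$ shows $d_0$ is bounded, and the substitution $t=u+v$ in the convolution integral shows $d_0(f*g)=d_0(f)\varphi_0(g)+\varphi_0(f)d_0(g)$, so $d_0$ is a non-zero continuous point derivation at the augmentation character.

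Finally, $d:=d_0\circ\theta$ is a non-zero continuous point derivation on $L^1_\om(G)$ at $\varphi^\om_0$, since $\theta$ is a surjective homomorphism intertwining the augmentation characters; hence $\mathcal{Z}^1(L^1_\om(G),\C_{\varphi^\om_0})\neq\{0\}$. As $\sg_\om$ is bounded, Theorem~\ref{T:2-weak amenable} then gives that $L^1_\om(G)$ is not $2$-weakly amenable, completing the argument.
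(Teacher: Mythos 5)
Your overall strategy is exactly the paper's: establish that $\sg_\om$ is bounded, exhibit a non-zero continuous point derivation at $\varphi^\om_0$, and conclude from the equivalence in Theorem \ref{T:2-weak amenable}. Your step (b) is correct and is a genuinely self-contained variant of the paper's: where the paper obtains the point derivation by quoting \cite[Theorems 13.2 and 13.9]{DL} (the hypothesis $rq(r)\ge\ln(1+r)$ gives $\om\ge 1+|\cdot|$ off a compact set, so Fourier transforms on the $\R$ or $\Z$ summand are continuously differentiable), you construct it directly as the moment functional $d_0(f)=\int_\R t\,f(t)\,dt$, bounded because $|t|\le C\om(t)$, with the Leibniz rule checked by the substitution $t=u+v$; composing with the surjective homomorphism $\theta$ onto $L^1_{\om|_\R}(\R)$ or $l^1_{\om|_\Z}(\Z)$ (the same homomorphism the paper already uses in the proof of Theorem \ref{T:2-weak amenable-poly}) is legitimate since $\varphi_0\circ\theta=\varphi^\om_0$ and $\theta$ is onto. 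This buys independence from the \cite{DL} computations at essentially no cost.

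The genuine gap is in your step (a), and you flagged it yourself: you assert that $\om_1$ is bounded but defer the proof to unstated ``concavity/regular-variation'' properties of $q$. No such properties are needed; the required estimate is one-sided and elementary, and it is precisely the content of the paper's proof. Write $\psi(r)=rq(r)$; you must bound $\psi(|t+s|)-\psi(|t|)$ \emph{from above} only. In the direction $|t+s|\ge|t|$, the decrease of $q$ alone suffices: with $b'=|t+s|\ge b=|t|$,
$$\psi(b')-\psi(b)=b'q(b')-bq(b)\le b'q(b')-bq(b')=(b'-b)\,q(b')\le |s|\,q(|t|)\longrightarrow 0.$$
In the direction $|t+s|<|t|$ the difference is $\le 0$ provided $r\mapsto rq(r)$ is (eventually) nondecreasing; the quantity you feared, $b\bigl(q(b)-q(b+\delta)\bigr)$, enters only here, and monotonicity of $\psi$ disposes of it outright. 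That monotonicity is tacit in the paper as well (its inequality $|t-s|\,q(|t-s|)\ge rq(r)$ with $r=|t|-|s|\le|t-s|$ uses it), it holds in all the listed condition-(S) examples, and it is effectively needed anyway for $\om(t)=e^{\psi(|t|)}$ to be submultiplicative, so it is harmless to invoke. This yields $\limsup_{t\to\infty}\om(t+s)/\om(t)\le\lim_{t\to\infty}e^{|s|q(|t|)}=1$, hence $\om_1\le 1$, and Lemma \ref{L:om-sg euivalent} gives $\sg_\om$ bounded --- no differentiability, concavity, or regular variation required. As written, your proposal leaves its main analytic step unproved; the two-line estimate above is what closes it.
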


\begin{proof}
Let $t,s \in G$ with $|t|>|s|$, and put $r=|t|-|s|$. Then $|t-s|\geq r$, and so, from the fact that $q$ is decreasing,
\begin{eqnarray*}
|t|q(|t|)-|t-s|q(|t-s|) &\leq & [r+|s|]q(r+|s|)-rq(r) \\
&=& r[q(r+|s|)-q(r)]+|s|q(r+|s|) \\
&\leq & |s|q(r+|s|) \\
&=& |s|q(|t|).
\end{eqnarray*}
Hence
$$\dss\limsup_{t\to\infty} \frac{\om(t+s)}{\om(t)}=\dss\limsup_{t\to\infty} \frac{\om(t)}{\om(t-s)} \leq \dss\limsup_{t\to\infty} e^{|s|q(|t|)}=1,$$
since $\dss\lim_{t\to\infty} q(|t|)=0$. Therefore $\sg_\om$ is bounded. On the other hand, by hypothesis, $\om(x) \geq 1+|x|$ outside a compact set. Thus the result follows in a similar way to Theorem \ref{T:2-weak amenable-exp}.
\end{proof}

Some examples of such weights are presented in \cite[Example 1.7]{DLC}. They are, for instance, given by\\

(i) $\om(t)=e^{C|t|^\alpha}=e^{|t|\frac{C}{|t|^{1-\alpha}}}, C>0, 0<\alpha<1,$

(ii) $\om(t)=e^{|t|\dss\Sigma_{n=1}^\infty \frac{c_n}{1+|t|^{\alpha_n}}}, 0<\alpha_n<1, \{ \alpha_n \}  \ \text{decreasing to 0}, \ \dss\Sigma_{n=1}^\infty c_n < \infty,$

(iii) $\om(t)=e^{C\frac{|t|}{\ln(e+|t|)}}$,

(iv) $\om(t)=e^{C\frac{|t|}{(\ln(e+|t|))^k}}, k>0$\\

In Theorem \ref{T:2-weak amenable-poly}, we gave examples of 2-weakly amenable Beurling algebras over (symmetric) polynomial weights. Now we will present a family of non-symmetric weights on $\R$ and $\Z$ for which the Beurling algebras are 2-weakly amenable and, at one side, they have a much faster growth. Let $0\leq \alpha < 1/2$ and define the functions $\om_\R$ and $\om_\Z$ on $\R$ and $\Z$, respectively, as follows:
$$\om_\R(t)=1 \ \text{if} \ t\geq 0 \ \text{and} \ \om_\R(t)=e^{|t|^\alpha} \ \text{if} \ t<0 ;$$
$$\om_\Z(n)=1 \ \text{if} \ n\geq 0 \ \text{and} \ \om_\Z(n)=e^{|n|^\alpha} \ \text{if} \ n<0 .$$

It is straightforward to verify that $\om_\R$ and $\om_\Z$ are weights.

\begin{prop}\label{P:2-weak amenable-sharp weights}
Let $\alpha$, $\om_\R$ and $\om_\Z$ be as above. Then:\\
$($i$)$ $l^1_{\om_\Z}(\Z)$ is $2$-weakly amenable;\\
$($ii$)$ $L^1_{\om_\R}(\R)$ is $2$-weakly amenable.
\end{prop}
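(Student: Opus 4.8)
To prove that these Beurling algebras are $2$-weakly amenable, I would apply Theorem \ref{T:2-weak amenable}. That theorem reduces the question, for a weight $\om \geq 1$ with $\sg_\om$ bounded, to showing that there is no non-zero continuous point derivation at the augmentation character, i.e. $\mathcal{Z}^1(L^1_\om(G),\C_{\varphi^\om_0})=\{0\}$. So my two tasks for each of $\om_\R$ and $\om_\Z$ are: first, verify that $\sg_\om$ is bounded (equivalently, by Lemma \ref{L:om-sg euivalent}, that $\om_1$ is bounded); and second, verify the vanishing of point derivations at the augmentation character.

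**Boundedness of $\sg_\om$.** For the first task I would compute $\om_1(s)=\limsup_{t\to\infty}\om(t+s)/\om(t)$ directly. Consider $\om_\R$ (the argument for $\om_\Z$ is identical with $n$ in place of $t$). As $t\to+\infty$, for fixed $s$ both $t$ and $t+s$ are eventually positive, so $\om_\R(t+s)/\om_\R(t)=1/1=1$. As $t\to-\infty$, both are eventually negative and the ratio is $e^{|t+s|^\alpha - |t|^\alpha}$; since $\bigl||t+s|^\alpha-|t|^\alpha\bigr|\to 0$ for $0\le\alpha<1$ (the function $r\mapsto r^\alpha$ has vanishing increments at infinity), this ratio tends to $1$ as well. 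Hence $\om_1(s)=1$ for every $s$, so $\om_1$ is bounded and by Lemma \ref{L:om-sg euivalent} so is $\sg_\om$.

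**Vanishing of point derivations.** The second task is the substantive one, and here I cannot invoke Proposition \ref{P:Point Der-Beurling alg}(ii): that criterion requires $\inf_n \om(nt)/n=0$ for all $t$, which fails on the exponentially growing side (for $t<0$ one has $\om(nt)=e^{n^\alpha|t|^\alpha}$, and $e^{n^\alpha|t|^\alpha}/n\to\infty$, not $0$). So the heart of the proof is to show $\mathcal{Z}^1(L^1_\om(G),\C_{\varphi^\om_0})=\{0\}$ by hand despite this failure. By Proposition \ref{P:Point Der-Beurling alg}(i) it suffices to treat a point derivation $d$ on $l^1_\om(G)$ (for $\om_\Z$ this is the whole algebra; for $\om_\R$ this reduction is exactly the content of that proposition). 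For such a $d$ one has $d(\delta_{nt})=n\,d(\delta_t)$ for all $n\in\Z$, hence the bound $\|d(\delta_t)\|\le \|d\|\,\om(nt)/|n|$ holds using \emph{both} signs of $n$. The key point is that $\om_\R$ (resp. $\om_\Z$) is bounded on the positive half-line, so letting $n\to+\infty$ for $t<0$ gives $\om(nt)/n=1/n\to 0$, forcing $d(\delta_t)=0$; and letting $n\to-\infty$ for $t>0$ likewise gives $d(\delta_t)=0$. Thus $d$ vanishes on all point masses and hence on $l^1_\om(G)$.

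**The main obstacle.** The delicate point — and the reason these examples are interesting — is precisely that the standard sufficient condition $\inf_n \om(nt)/n=0$ fails, yet point derivations still vanish because one is free to send $n$ to $\pm\infty$ and exploit the \emph{asymmetry} of the weight: whichever direction makes $\om(nt)$ grow, the opposite direction keeps it bounded (in fact equal to $1$), and boundedness in one direction already kills $d(\delta_t)$. I expect the writing-out of this two-sided estimate, together with the verification that $0\le\alpha<1/2$ suffices (the restriction $\alpha<1/2$ presumably enters through the weak-amenability sharpness discussion or the symmetrization $\Omega$, rather than through $2$-weak amenability per se), to be the step requiring the most care. Once $\sg_\om$ is bounded and point derivations vanish, Theorem \ref{T:2-weak amenable} closes the argument immediately for both $l^1_{\om_\Z}(\Z)$ and $L^1_{\om_\R}(\R)$.
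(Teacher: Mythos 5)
Your proposal is correct in substance, and it reaches the result by a genuinely different route at the core step. You and the paper agree on the frame: verify that $\sg_\om$ is bounded (both via $\limsup_{t\to\infty}\om(t+s)/\om(t)=1$), reduce $L^1_{\om_\R}(\R)$ to the discrete algebra $l^1_{\om_\R}(\R)$ by Proposition \ref{P:Point Der-Beurling alg}(i), and close with Theorem \ref{T:2-weak amenable}. But for the vanishing of point derivations the paper does not argue at the augmentation character directly: for (i) it invokes Zarrabi's spectral-synthesis theorem \cite{Z} --- since $\limsup_{n\to+\infty}\ln\om_\Z(-n)/\sqrt{n}=\limsup_n n^{\alpha-1/2}=0$, singletons in $\mathbb{T}$ are sets of synthesis, so $l^1_{\om_\Z}(\Z)$ has no non-zero continuous point derivations at \emph{any} character --- and for (ii) it restricts a point derivation on $l^1_{\om_\R}(\R)$ to the closed subalgebras generated by the cyclic subgroups $\la r \ra$, each of $l^1_{\om_\Z}(\Z)$ type, and applies (i). This is exactly where the hypothesis $\alpha<1/2$ enters, through the $\sqrt{n}$ in Zarrabi's criterion --- not, as you guessed, through the symmetrization or the weak-amenability sharpness discussion. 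Your substitute --- the two-sided estimate $|d(\delta_t)|\leq \Vert d\Vert\,\om(nt)/|n|$ for all $n\in\Z\setminus\{0\}$, legitimate at $\varphi^\om_0$ because $d(\delta_{s+t})=d(\delta_s)+d(\delta_t)$ and hence $d(\delta_{-t})=-d(\delta_t)$ --- is more elementary (no external synthesis theorem, no cyclic-subgroup reduction), yields only what Theorem \ref{T:2-weak amenable} actually requires (vanishing at the augmentation character alone, rather than at every character), and in fact proves the proposition on the larger range $0\leq\alpha<1$, since $\alpha<1$ already gives $\om_1\equiv 1$ and the half-line where $\om=1$ is all your estimate uses.

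One slip to correct: you paired the limits backwards. For $t<0$ and $n\to+\infty$ one has $nt<0$, so $\om(nt)=e^{|nt|^\alpha}$, which gives nothing; you must take $n\to-\infty$ when $t<0$ (so that $nt>0$ and $\om(nt)=1$) and $n\to+\infty$ when $t>0$ --- or, equivalently, kill $d(\delta_t)$ for $t>0$ first and then use $d(\delta_{-t})=-d(\delta_t)$. Your stated principle (``whichever direction makes $\om(nt)$ grow, the opposite direction keeps it bounded'') is the right one; only the instantiation is transposed, and the fix is immediate.
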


\begin{proof}
(i) It is easy to see that, $\dss\limsup_{n\to\infty} \frac{\om_\Z(n+m)}{\om_\Z(n)}=1$. On the other hand, $l^1_{\om_\Z}(\Z)$ is a commutative regular semisimple Banach algebra on $\mathbb{T}=\{ z\in \C \mid |z|=1 \}$. It is shown in \cite{Z} that, since
$$\dss\limsup_{n\to+\infty} \ \ln\om(-n)/\sqrt{n}=0,$$
singeltons in $\mathbb{T}$ are sets of spectral synthesis for $l^1_{\om_\Z}(\Z)$. Thus there are no non-zero continuous point derivations on $l^1_{\om_\Z}(\Z)$. Hence the result follows from Theorem \ref{T:2-weak amenable}.\\
(ii) Since $\sg_{\om_\R}$ is bounded, by Theorem \ref{T:2-weak amenable} and Proposition \ref{P:Point Der-Beurling alg}(i), it suffices to show that
$$\mathcal{Z}^1(l^1_{\om_\R}(\R),\C_{\varphi^{\om_\R}_0})=\{0\}.$$
Let $d\in \mathcal{Z}^1(l^1_{\om_\R}(\R),\C_{\varphi^{\om_\R}_0})$. For every $r\in \R^+$, let $\la r \ra$ be the discrete additive subgroup of $\R$ generated by $r$.
Clearly the closed subalgebra $A_r$ of $l^1_{\om_\R}(\R)$ generated by the restriction to $\la r \ra$ is algebraically isomorphic to $l^1_{\om_\Z}(\Z)$. Thus, from (i), $d=0$ on $A_r$, and so, $d(\delta_r)=d(\delta_{-r})=0$. Hence $d=0$.
\end{proof}

The preceding theorem and Remarks \ref{T:2-weak amenable-tensor} can be routinely employed to construct fast growing weights on compactly generated groups for which the Beurling algebras are 2-weakly amenable.
For each $1\leq i \leq k$ and $1\leq j \leq m$, let $0\leq \alpha_i < 1/2$ and $0\leq \beta_j < 1/2$, and let $\om_{\alpha_i}$ and $\om_{\beta_j}$ be the weights on $\R$ and $\Z$ associated, as in Proposition \ref{P:2-weak amenable-sharp weights}, with $\alpha_i$ and $\beta_j$, respectively. Put
$$\om=\prod_{i=1}^k \om_{\alpha_i} \times \prod_{j=1}^m \om_{\beta_j}.$$ By the identification $(\star)$,
$\om$ defines a weight on $G$.

\begin{thm}\label{T:2-weak amenable-sharp weights}
Let $G$ be a compactly generated group, and let $\om$ be as above. Then $L^1_{\om}(G)$ is $2$-weakly amenable.
\end{thm}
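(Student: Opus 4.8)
The plan is to decompose $L^1_\om(G)$ as a projective tensor product over the factors of the structure decomposition and then appeal to Remark \ref{T:2-weak amenable-tensor}. By the Structure Theorem we have $G\cong \R^k\times\Z^m\times T$ with $T$ compact, and, by its very construction, the weight $\om$ is the product of the weights $\om_{\alpha_1},\dots,\om_{\alpha_k}$ on the $\R$-factors, the weights $\om_{\beta_1},\dots,\om_{\beta_m}$ on the $\Z$-factors, and the trivial weight $\om\equiv 1$ on the compact factor $T$. Consequently $L^1_\om(G)$ is algebraically isomorphic to the projective tensor product of the algebras $L^1_{\om_{\alpha_i}}(\R)$ $(1\leq i\leq k)$, $l^1_{\om_{\beta_j}}(\Z)=L^1_{\om_{\beta_j}}(\Z)$ $(1\leq j\leq m)$, and $L^1(T)$, exactly as in the setup of Remark \ref{T:2-weak amenable-tensor}.

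First I would verify, factor by factor, the three hypotheses of Remark \ref{T:2-weak amenable-tensor}, namely that each weight is $\geq 1$, that each $\sg_{\om_i}$ is bounded, and that each factor algebra is $2$-weakly amenable. Each $\om_{\alpha_i},\om_{\beta_j}$ is $\geq 1$, and the trivial weight on $T$ is $\equiv 1$. For the $\R$- and $\Z$-factors the boundedness of $\sg_{\om_i}$ was already recorded in the proof of Proposition \ref{P:2-weak amenable-sharp weights}, where $\limsup_{t\to\infty}\om(t+s)/\om(t)=1$; thus $\om_1\equiv 1$ is bounded and $\sg_{\om_i}$ is bounded by Lemma \ref{L:om-sg euivalent}. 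The $2$-weak amenability of $L^1_{\om_{\alpha_i}}(\R)$ and of $l^1_{\om_{\beta_j}}(\Z)$ is precisely Proposition \ref{P:2-weak amenable-sharp weights}(ii) and (i).

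The only point needing a genuinely separate argument, and thus the main (if minor) obstacle, is the compact factor $T$. There $\om\equiv 1$, so $\om_1\equiv 1$ is bounded, giving that $\sg_1$ is bounded; moreover $\inf\{\om(nt)/n\mid n\in\N\}=\inf\{1/n\mid n\in\N\}=0$ for every $t\in T$, so by Proposition \ref{P:Point Der-Beurling alg}(ii) there is no non-zero continuous point derivation on $L^1(T)$ at the augmentation character. Since $\sg_1$ is bounded, Theorem \ref{T:2-weak amenable} then yields that $L^1(T)$ is $2$-weakly amenable. With all three hypotheses verified for every factor, Remark \ref{T:2-weak amenable-tensor} applies and gives that $L^1_\om(G)$ is $2$-weakly amenable. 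In the degenerate case $k=m=0$ the product is empty and $G=T$ is compact, so the conclusion reduces to the $2$-weak amenability of $L^1(T)$ just established, and the argument remains uniform.
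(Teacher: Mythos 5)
Your proposal is correct and is essentially the paper's own (unwritten) argument: the paper explicitly states just before the theorem that it follows by ``routinely'' combining Proposition \ref{P:2-weak amenable-sharp weights} with Remark \ref{T:2-weak amenable-tensor}, which is exactly the factor-by-factor verification you carry out. Your explicit treatment of the compact factor $T$ (trivial weight, $\inf\{1/n\}=0$, Theorem \ref{T:2-weak amenable}) fills in a detail the paper leaves implicit, and it is sound.
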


\end{document}